\newcommand{\ora}{\ensuremath{\vee \mathcal A}}
\newcommand{\bE}{\ensuremath{\mathbf{E}}}
\newcommand{\pmt}{\ensuremath{\Pr_{\text{MT}}}}
\newcommand{\emt}{\ensuremath{{\mathbf E}_{\text{MT}}}}
\newtheorem{theorem}{Theorem}[section]
\newtheorem{proposition}[theorem]{Proposition}
\newtheorem{lemma}[theorem]{Lemma}
\newtheorem{corollary}[theorem]{Corollary}
\newtheorem{definition}[theorem]{Definition}
\newtheorem{conjecture}[theorem]{Conjecture}
\DeclareMathOperator{\rot}{root}
\DeclareMathOperator{\srot}{semiroot}
\newtheorem*{rep@theorem}{\rep@title}
\newcommand{\newreptheorem}[2]{%
\newenvironment{rep#1}[1]{%
 \def\rep@title{#2 \ref{##1}}%
 \begin{rep@theorem}}%
 {\end{rep@theorem}}}
\begin{document}

\title{New bounds for the Moser-Tardos distribution}
\author{David G. Harris\thanks{Department of Computer Science, University of Maryland, 
College Park, MD 20742. 
Research supported in part by NSF Awards CNS 1010789 and CCF 1422569.
Email: \texttt{davidgharris29@gmail.com}}}

\date{}

\maketitle

\VerbatimFootnotes

\begin{abstract}

\small\baselineskip=9pt The Lov\'{a}sz Local Lemma (LLL) is a probabilistic tool which has been used to show the existence of a variety of combinatorial structures with good ``local'' properties. In many cases, one wants more information about these structures, other than that they exist. In such case, the ``LLL-distribution'' can be used to show that the resulting structures have good global properties in expectation.

While the LLL in its classical form  is a statement about probability spaces, nearly all applications in combinatorics have been turned into efficient algorithms. The simplest, variable-based setting of the LLL was covered by the seminal algorithm of Moser \& Tardos (2010). This has since been extended to other probability spaces including random permutations. One can similarly define  an ``MT-distribution'' for these algorithms, that is, the distribution of the configuration they produce. Haeupler et al. (2011) showed bounds on the MT-distribution which essentially match the LLL-distribution for the variable-assignment setting; Harris \& Srinivasan showed similar results for the permutation setting. 

In this work, we show new bounds on the MT-distribution which are significantly stronger than those known to hold for the LLL-distribution. In the variable-assignment setting, we show a tighter bound on the probability of a disjunctive event or singleton event. As a consequence, in $k$-SAT instances with bounded variable occurrence, the MT-distribution satisfies an $\epsilon$-approximate independence condition asymptotically stronger than the LLL-distribution. We use this to show a nearly tight bound on the minimum implicate size of a CNF boolean formula. Another noteworthy application is constructing independent transversals which avoid a given subset of vertices; this provides a constructive analogue to a result of Rabern (2014).

In the permutation LLL setting, we show a new type of bound which is similar to the cluster-expansion LLL criterion of Bissacot et al. (2011), but is stronger and takes advantage of the extra structure in permutations. We illustrate with improved bounds on weighted Latin transversals and partial Latin transversals.
\end{abstract}

\section{Introduction}
The Lov\'{a}sz Local Lemma (LLL) is a general principle in probability theory, first introduced in \cite{lll-orig}, that shows that it is possible to avoid a collection $\mathcal B$ of bad events in a probability space $\Omega$, as long as the bad-events are not too interdependent and are not too likely. This can be used to construct a wide variety of ``scale-free'' structures,  making the LLL one of the cornerstones of the probabilistic method in combinatorics. The simplest ``symmetric'' form of the LLL states that if each bad-event has probability at most $p$, and is dependent with at most $d$ bad-events such that $e p d \leq 1$, then there is a positive probability that all the events in $\mathcal B$ are false.

The LLL shows that a configuration avoiding $\mathcal B$ exists; we might wish to know more about such configurations. One powerful extension of the LLL, introduced by Haeupler, Saha \& Srinivasan \cite{hss}, is the \emph{LLL-distribution}; namely the distribution of $\Omega$ conditioned on avoiding $\mathcal B$. For the symmetric LLL, we  have the following bound:
\begin{proposition}[\cite{hss}]
\label{flll-dist-prop-simple}
Let $E$ be any event on the probability space $\Omega$. If the symmetric LLL criterion is satisfied, then
$$
\Pr_{\Omega}(E \mid \bigcap_{B \in \mathcal B} \overline B) \leq \Pr_{\Omega}(E) (1 + e p)^{\text{\# bad-events which affect $E$}}
$$
\end{proposition}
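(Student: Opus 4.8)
The plan is to deduce this symmetric statement from the general \emph{asymmetric} LLL-distribution bound and then optimize the choice of the weighting parameters. First I would recall the asymmetric framework: let $x : \mathcal B \to (0,1)$ and suppose that for every $B \in \mathcal B$ one has $P_\Omega(B) \le x_B \prod_{B' \sim B,\, B' \ne B}(1 - x_{B'})$, where $B' \sim B$ means that $B'$ and $B$ are dependent. I claim that then, for any event $E$,
\[
P_\Omega\Big(E \,\Big|\, \bigcap_{B \in \mathcal B}\overline B\Big) \le P_\Omega(E)\prod_{B \,\sim\, E}\frac{1}{1-x_B},
\]
where the product runs over the bad-events $B \in \mathcal B$ that affect (are dependent with) $E$.

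To prove this claim I would fix an arbitrary $\mathcal S \subseteq \mathcal B$, split it as $\mathcal S_1 = \{B \in \mathcal S : B \sim E\}$ and $\mathcal S_2 = \mathcal S \setminus \mathcal S_1$, and write
\[
P_\Omega\Big(E \,\Big|\, \bigcap_{B \in \mathcal S}\overline B\Big) = \frac{P_\Omega\big(E \wedge \bigcap_{B \in \mathcal S_1}\overline B \,\mid\, \bigcap_{B \in \mathcal S_2}\overline B\big)}{P_\Omega\big(\bigcap_{B \in \mathcal S_1}\overline B \,\mid\, \bigcap_{B \in \mathcal S_2}\overline B\big)} .
\]
For the numerator, I drop the events of $\mathcal S_1$ and use that $E$ is mutually independent of $\mathcal S_2$ (no event of $\mathcal S_2$ affects $E$), which gives the upper bound $P_\Omega(E)$. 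For the denominator, I enumerate $\mathcal S_1 = \{B_1,\dots,B_m\}$, expand as a telescoping product $\prod_{i=1}^m P_\Omega\big(\overline{B_i} \mid \bigcap_{j<i}\overline{B_j} \wedge \bigcap_{B \in \mathcal S_2}\overline B\big)$, and lower-bound each factor by $1 - x_{B_i}$ using the standard inductive LLL estimate $P_\Omega(B \mid \bigcap_{B' \in \mathcal T}\overline{B'}) \le x_B$ valid for any $\mathcal T \subseteq \mathcal B$ with $B \notin \mathcal T$ (itself proved by induction on $|\mathcal T|$ via the same neighbor/non-neighbor split). Combining the two bounds and then taking $\mathcal S = \mathcal B$ proves the claim.

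Finally, to reach the symmetric form I would instantiate the weights uniformly as $x_B = \tfrac{ep}{1+ep}$, so that $\tfrac{1}{1-x_B} = 1 + ep$ and the product over the $N := \#\{B \in \mathcal B : B \text{ affects } E\}$ relevant bad-events is exactly $(1+ep)^N$. It remains to verify the asymmetric criterion: with $P_\Omega(B) \le p$ and at most $d-1$ events $B' \ne B$ dependent with $B$, the requirement becomes $p \le \tfrac{ep}{1+ep}\,(1+ep)^{-(d-1)} = \tfrac{ep}{(1+ep)^{d}}$, i.e.\ $(1+ep)^{d} \le e$; since $\ln(1+ep) \le ep$, this follows from the hypothesis $epd \le 1$.

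The one subtle point is the numerator step: one must check that mutual independence of $E$ from $\mathcal S_2$ really does allow replacing $P_\Omega\big(E \mid \bigcap_{B \in \mathcal S_2}\overline B\big)$ by $P_\Omega(E)$ — this uses that $\bigcap_{B \in \mathcal S_2}\overline B$ is a Boolean combination of events each independent of $E$ and that this conditioning event has positive probability, which is itself an instance of the LLL applied to $\mathcal S_2$. Everything else is the routine asymmetric-LLL induction, so I expect no further obstacle.
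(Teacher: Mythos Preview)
Your argument is correct and is essentially the classical proof from \cite{hss}: the asymmetric LLL-distribution bound via the neighbor/non-neighbor split and the inductive estimate $P_\Omega(B \mid \bigcap_{B'\in\mathcal T}\overline{B'})\le x_B$, followed by the specialization $x_B = ep/(1+ep)$. The paper itself does not give a proof of this proposition; it simply cites \cite{hss}. So there is nothing to compare at the level of this particular statement.

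That said, the paper's own machinery (Theorem~\ref{lll-dist-theta-thm}, proved in Appendix~\ref{mt-dist-result}) would also yield the symmetric bound, but by a genuinely different route: instead of the asymmetric-LLL induction, it works directly with Shearer's polynomials $Q(\cdot)$ and the quantity $\mu(\cdot)$, proving $P_\Omega(E\mid\overline{\mathcal B})\le\theta_{\mathcal B[E]}(E)$ and then reading off the symmetric case from $\mu(B)\le ep$. Your approach is more elementary and self-contained for the symmetric statement; the paper's approach is more general (it works under Shearer's criterion, not just the asymmetric LLL) and plugs directly into the rest of the paper's framework.

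One small wording point: your justification for the numerator step (``Boolean combination of events each independent of $E$'') is a little loose in the abstract lopsidependency setting, where pairwise independence of $E$ from each $B\in\mathcal S_2$ need not imply independence from $\bigcap_{B\in\mathcal S_2}\overline B$. What one actually uses is precisely the dependency-graph hypothesis $P_\Omega(E\mid\overline{\mathcal S_2})\le P_\Omega(E)$ for $\mathcal S_2\subseteq\mathcal B\setminus N(E)$; in the variable-assignment setting this holds because $E$ and $\mathcal S_2$ depend on disjoint variable sets, which is presumably what you meant.
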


Thus, in a certain sense, the LLL-distribution is a mildly distorted version of the space $\Omega$. The LLL-distribution has found a number of uses in algorithms and combinatorics. Some of these techniques are developed in \cite{partial-resampling, mt-dist} which use them to construct low-weight independent transversals, partial Latin transversals and solutions to MAX-$k$-SAT, among other applications.

The LLL is stated in great generality in terms of probability spaces and events. Most applications of the LLL use a simpler formulation referred to as the \emph{variable-assignment LLL}. Here, the probability space $\Omega$ is a product space with $n$ independent variables $X(1), \dots, X(n)$.  The events in this space are boolean functions of subsets of the variables, and we say that $E \sim E'$ iff $E, E'$ overlap in some variable(s), i.e., if $\text{var}(E) \cap \text{var}(E') \neq \emptyset$.

The seminal algorithm of Moser \& Tardos \cite{moser-tardos}, (henceforth the ``MT algorithm'') turns nearly all applications of the variable-assignment LLL into corresponding polynomial-time algorithms. Furthermore one may define the \emph{MT-distribution}, namely the distribution on the variables at the termination of the MT algorithm. As shown in \cite{hss}, the same bound holds for the LLL-distribution as for the MT-distribution:
\begin{proposition}[\cite{hss}]
\label{fmt-dist-prop-simple}
Let $E$ be any event in the probability space $\Omega$. If the symmetric LLL criterion is satisfied, then the probability of $E$ in the MT-distribution is upper-bounded by
$$
\pmt(E) \leq \Pr_{\Omega}(E) (1 + e p)^{\text{\# bad-events which affect $E$}}
$$
\end{proposition}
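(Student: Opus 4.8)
The plan is to follow the standard witness-tree / forest argument of Moser--Tardos, adapted to track a fixed event $E$ rather than just the bad-events in $\mathcal B$. The key device is to augment the bad-event collection: introduce an auxiliary ``event'' $E$ and ask the MT algorithm to report, at termination, whether the final assignment satisfies $E$. Formally, $\pmt(E)$ equals the probability that the execution log of MT, extended by one final ``check'' of $E$, produces a witness tree whose root is labelled $E$ and whose other nodes are labelled by the bad-events resampled during the run. So I would first recall the witness-tree construction: from the log of resamplings one builds, for any occurrence of a bad-event (or for the terminal check of $E$), a rooted tree $\tau$ in which children of a node labelled $B$ are labelled by events $B'$ with $B' \sim B$ (including $B' = B$), and distinct children at the same parent have distinct labels.

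Next I would invoke the Witness Tree Lemma: the probability that a particular witness tree $\tau$ with root-label $E$ appears in the execution is at most $P_\Omega(E)\prod_{v}P_\Omega(\text{label}(v))$, where the product ranges over non-root nodes $v$; this follows verbatim from the Moser--Tardos coupling-with-an-independent-resampling-table argument, since the root check of $E$ consumes a fresh block of random bits exactly as a resampling would. Summing over all witness trees with root-label $E$ then gives
\[
\pmt(E)\ \le\ P_\Omega(E)\sum_{\tau:\ \mathrm{root}=E}\ \prod_{v\ne\mathrm{root}}P_\Omega(\text{label}(v)).
\]
The remaining task is purely combinatorial: bound this sum. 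Here the only neighbours that can appear directly below the root are the bad-events $B$ with $\mathrm{var}(B)\cap\mathrm{var}(E)\ne\emptyset$, i.e.\ the ``bad-events which affect $E$'', and below any bad-event node the neighbours are governed by the dependency degree $d$ and probability bound $p$.

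**The main obstacle.** The crux is the forest/generating-function estimate controlling $\sum_{\tau}\prod_v p$. The cleanest route is the Moser--Tardos Galton--Watson coupling: a random witness tree rooted at a node labelled $B$ is dominated by a branching process in which each node independently spawns, for each of its (at most $d$) potential neighbours $B'$, a child labelled $B'$ with probability $P_\Omega(B')$. One shows the expected total ``weight'' of the subtree hanging off a single potential child of a node is at most $e p$ when $epd\le 1$ — this is the standard fixed-point/Lagrange-inversion computation that $x=\sum_{k\ge0}\binom{dk}{k}\frac{p(1+p)^{dk}}{\cdots}$, or more simply the bound that the per-child subtree weight $q$ satisfies $q\le ep$ via $q = P_\Omega(B')(1+q)^{d}$ having solution $\le ep$. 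Consequently each of the (at most, number of bad-events affecting $E$) potential children of the root contributes a multiplicative factor at most $1+ep$, and since the root itself is not resampled (so contributes only the leading $P_\Omega(E)$), multiplying over those children yields exactly the stated $(1+ep)^{\#\{B:\ \mathrm{var}(B)\cap\mathrm{var}(E)\ne\emptyset\}}$. I expect the delicate points to be: (i) verifying that appending the terminal check of $E$ to the log does not disturb the randomness-table coupling — one must be careful that $E$ is evaluated on the \emph{final} assignment and that the bits it reads were never ``reused'' inappropriately; and (ii) making the branching-process domination rigorous when the tree sum is infinite, which requires the $epd\le1$ (or its asymmetric analogue) hypothesis to guarantee convergence. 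Both are handled exactly as in \cite{hss}, so the novelty here is only bookkeeping; the heavy lifting is the Witness Tree Lemma plus the branching-process weight bound.
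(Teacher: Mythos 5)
Your proposal is correct and follows essentially the same route the paper (and its cited source \cite{hss}) uses: the resampling-table coupling, the witness-tree lemma bounding the appearance probability of a tree rooted at $E$ by its weight, and the branching-process/fixed-point estimate $q \le ep$ giving the factor $(1+ep)$ per neighbour of $E$. The paper's Section~\ref{fmt-sec} proves the refined version (Proposition~\ref{fmt-dist-prop1}) with exactly this machinery, phrased via witness DAGs and the measure $\mu$ in place of your Galton--Watson bound, and the symmetric statement follows by the same specialization $\mu(B)\le ep$ you describe.
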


Although the variable-assignment LLL is by far the most common setting in combinatorics, there are other probability spaces for which a generalized form of the LLL, known as the \emph{Lopsided Lov\'{a}sz Local Lemma} or \emph{LLLL}, applies. This was introduced by Erd\H{o}s \& Spencer \cite{erdos-spencer}, which showed that it applies to the probability space of the uniform distribution on permutations, and used this for a construction involving Latin transversals. We refer to this setting as the \emph{permutation LLL}; it has since been used for a variety of other problems. The LLLL also covers probability spaces such as hamiltonian cycles and matchings of the complete graph \cite{quest}. 

Algorithms similar to MT have also been developed for general probability spaces, such as \cite{harvey, achlioptas, harris-obl}; nearly all applications of the LLLL now have corresponding polynomial-time algorithms. One may define an analogous ``MT-distribution'', i.e. the distribution at the algorithm termination. Results of \cite{ilio, kolmogorov} show that for many of these settings, specifically those which have an object known as a ``commutative resampling oracle,'' the MT-distribution has similar properties to the LLL-distribution:
\begin{proposition}[\cite{ilio}]
\label{fmt-dist-prop-simple-perm}
If a probability space $\Omega$ has a commutative resampling oracle, then for any event $E$ on the space $\Omega$ we have:
$$
P_{\text{MT}}(E) \leq \Pr_{\Omega}(E) (1 + e p)^{\text{\# bad-events which affect $E$}}
$$
\end{proposition}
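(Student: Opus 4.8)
The plan is to analyze the Moser--Tardos algorithm through the \emph{witness tree} method, treating $E$ as an auxiliary bad-event that is ``checked'' once at the very end. Formally, augment the algorithm so that, after it terminates (no bad-event of $\mathcal B$ holds), it performs one extra resampling step for $E$; then $\pmt(E)$ is exactly the probability that this extra step is executed. As in the Moser--Tardos framework, to each resampled event one associates a witness tree obtained by scanning the execution log backwards from that step: the root is labelled $E$, and each earlier resampled event $B$ is attached as a child of the deepest node whose label is lopsidependent with $B$ (in particular the root's children carry labels of bad-events that \emph{affect} $E$, and siblings receive distinct labels). A union bound over the trees that the final step could produce gives
\begin{equation*}
\pmt(E) \;\le\; \sum_{\tau} \Pr[\tau \text{ occurs in the execution}],
\end{equation*}
the sum ranging over all witness trees whose root is labelled $E$.

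The crux is the \emph{Witness Tree Lemma}: for every such $\tau$,
\begin{equation*}
\Pr[\tau \text{ occurs}] \;\le\; \prod_{v \in \tau} P_\Omega(B_v) \;=\; P_\Omega(E)\prod_{v \in \tau,\ v \neq \mathrm{root}} P_\Omega(B_v).
\end{equation*}
This is the step I expect to be the main obstacle. In the variable-assignment setting it comes from the classical coupling of Moser--Tardos with a fixed table of fresh random values; in the general setting of Proposition~\ref{fmt-dist-prop-simple-perm} there is no such table, so one must instead use the \emph{commutativity} of the resampling oracles: resamplings of events that are not lopsidependent may be reordered, which lets one ``replay'' the portion of the execution recorded by $\tau$ in a canonical order and then telescope the conditional laws of the successive oracle outputs down to $\prod_v P_\Omega(B_v)$ (using that a commutative resampling oracle for $B$ restores the correct conditional distribution after removing $B$). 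Making this precise --- a careful induction on $\tau$, or a coupling over log-prefixes --- is exactly the technical content supplied by \cite{ilio, kolmogorov}, which I would invoke.

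Granting the Witness Tree Lemma, it remains to bound $\sum_{\tau}\prod_{v\neq\mathrm{root}}P_\Omega(B_v)$. Since the children of the root are a set of \emph{distinct} bad-events each affecting $E$, grouping by which subset occurs factors the sum as $\prod_{B:\,B\text{ affects }E}(1+W_B)$, where $W_B:=\sum_{\tau'}\prod_{v\in\tau'}P_\Omega(B_v)$ runs over ordinary witness trees $\tau'$ rooted at $B$. The standard Moser--Tardos estimate says that if weights $x:\mathcal B\to(0,1)$ satisfy the asymmetric criterion $P_\Omega(B)\le x(B)\prod_{B'\sim B,\,B'\neq B}(1-x(B'))$, then $W_B\le x(B)/(1-x(B))$. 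Taking $x(B)\equiv \tfrac{ep}{1+ep}$ and using $epd\le 1$, one checks $x(1-x)^{d-1}=\tfrac{ep}{(1+ep)^{d}}\ge ep\,e^{-epd}\ge ep/e=p$, so the criterion holds and $1+W_B\le \tfrac{1}{1-x(B)}=1+ep$ for every $B$. Multiplying over the bad-events affecting $E$ yields $\pmt(E)\le P_\Omega(E)(1+ep)^{\#\text{bad-events affecting }E}$, as claimed. (The same skeleton recovers Proposition~\ref{fmt-dist-prop-simple}; only the proof of the Witness Tree Lemma changes.)
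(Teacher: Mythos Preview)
The paper does not give its own proof of this proposition: it is stated with attribution to \cite{ilio} and left unproved in the text. So there is nothing to compare against directly.

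Your sketch is the standard route and matches what \cite{ilio,kolmogorov} do: build a witness tree rooted at $E$, invoke the Witness Tree Lemma (whose proof in the general setting hinges on commutativity of the resampling oracles, exactly as you flag), and then sum tree weights using the asymmetric LLL weights $x(B)=ep/(1+ep)$. The arithmetic check $x(1-x)^{d-1}=ep/(1+ep)^d\ge ep\,e^{-epd}\ge p$ and the conclusion $1+W_B\le 1/(1-x)=1+ep$ are correct. One small remark: when you factor as $\prod_{B\text{ affects }E}(1+W_B)$ you are summing over \emph{all} subsets of such $B$, whereas the children of the root actually form an independent set; your bound is still valid since you are only overcounting, but it is worth saying so explicitly. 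Also, you are right to treat the Witness Tree Lemma as a black box here---writing out the commutative-oracle coupling carefully is the real work, and that is precisely what the cited references supply.
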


This principle can be used to efficiently construct an number of combinatorial objects, such as partial Latin transversals \cite{mt-dist, ilio}.

Although the MT-distribution and LLL-distribution are not the same, for the most part Propositions~\ref{flll-dist-prop-simple},  \ref{fmt-dist-prop-simple}, \ref{fmt-dist-prop-simple-perm} have been essentially the only tools available to obtain bounds on them. Since these three propositions give the same bounds, the MT-distribution and LLL-distributions might as well be the same.\footnote{There are more precise forms of all of these results, which we will encounter later in this paper. The precise formulations in all three settings give exactly the same formula.}

In this paper, we will analyze the MT-distribution more carefully. We show that the MT-distribution has smaller distortion than previously known compared to the original space $\Omega$. This is another demonstration that the MT algorithm is more than a ``constructive'' form of the LLL.

\subsection{Outline and results}
In Section~\ref{flll-sec}, we provide a review of the LLL and MT algorithm. 

In Section~\ref{fmt-disj-sec}, we analyze the MT-distribution for disjunctive events (that is, events of the form $A_1 \cup \dots \cup A_m$). We show that in the MT-distribution, the probability of the disjunction $A_1 \cup \dots \cup A_m$ is \emph{significantly less} than the sum of the probabilities of $A_1, \dots, A_m$. For instance, if each $A_i$ affects at most $r$ bad-events, then for the symmetric LLL,
$$
\pmt(A_1 \cup \dots \cup A_m ) \leq \Pr_{\Omega}(A_1 \cup \dots \cup A_m) (1 + e p)^{r}
$$

By contrast, arguments based on Proposition~\ref{fmt-dist-prop-simple} would give the weaker bound
$$
\pmt(A_1 \cup \dots \cup A_m ) \leq \min \Bigl( \Pr_{\Omega}(A_1 \cup \dots \cup A_m) (1 + e p)^{m r}, \sum_i \Pr_{\Omega}(A_i) (1 + e p)^r \Bigr)
$$

The proof is based on a connection between the MT distribution and the resampling table (a coupling construction used to analyze the MT algorithm.) The entries of the resampling table are independent, and we show that some of this independence is preserved in the MT distribution.

In Section~\ref{fmt-sing-sec}, we analyze the MT-distribution for singleton events (that is, events of the form $X(i) \in D$). Again, we show that the probability of such an event can be significantly less than a generic bound for the LLL-distribution. For instance, in the symmetric LLL, if variable $i$ appears in bad-events $B_1, \dots, B_m$, then in the MT distribution we would have
$$
\pmt(X(i) \in D) \leq \Pr_{\Omega}(X(i) \in D) (1 + e^{e p d} \Pr_{\Omega}( \bigcup_{i=1}^m B_i))
$$

By contrast, Proposition~\ref{fmt-dist-prop-simple} would give the weaker bound
$$
\pmt( X(i) \in D ) \leq \Pr_{\Omega}(X(i) \in D) (1 + e^{e p d} \sum_{i=1}^m \Pr_{\Omega}(B_i))
$$

In Section~\ref{ksat-sec}, we apply these results to $k$-SAT instances in which each variable appears in at most $L$ clauses. A classical application of the LLL and MT algorithm is to show that such instances are satisfiable when $L$ sufficiently small as a function of $k$. We show that the resulting MT-distribution is close to the probability space consisting of $n$ independent random bits. More precisely, we show that the MT-distribution is $\epsilon$-approximately, $j$-wise independent for $j \leq k$ and $\epsilon = \Theta(L 2^{-k})$. By contrast, the LLL-distribution appears to satisfy an asymptotically weaker condition with $\epsilon = \Theta(j L 2^{-k})$.

As a consequence of this, we show an extremal bound for boolean logic: for any CNF formula $\Phi$, in which every clause has size at least $k$ and every variable appears in at most $L \leq \frac{2^k}{e k}$ clauses, the minimum implicate size of $\Phi$ is $k - \lfloor \log_2 (e L) \rfloor$. This bound is asymptotically tight (up to an constant additive term). We are not aware that this property has been studied before.

In Section~\ref{find-trans-sec}, we analyze independent transversals, an important construction in graph theory. Given a graph $G$ and a subset of vertices $L \subseteq V$, we show that as long as $L$ is small enough then the MT algorithm has a good probability of producing an independent transversal disjoint to $L$. This gives a constructive version of a result of Rabern \cite{rabern}. This also significantly generalizes results of Harris \& Srinivasan \cite{partial-resampling}, which showed this only for $L$ a singleton set. 

In Section~\ref{fperm-lll-sec}, we show tighter bounds for the MT distribution corresponding to the permutation LLL. In \cite{prev-lll}, Harris \& Srinivasan developed a variant of the MT algorithm, and showed bounds on the corresponding MT-distribution it produced.  We describe a new method of analyzing their algorithm in terms of a more succinct or ``compressed'' history for why any given event $E$ became true. This analysis is inspired by Harris \cite{harris-llll}, which constructed similar (but simpler) compressed trees for the variable-assignment LLLL. This leads to better bounds on the distribution of the configuration at the algorithm termination.

In Section~\ref{latin-sec} we apply our permutation LLL bounds to constructions involving Latin transversals.  Consider an $n \times n$ matrix $A$, in which all the entries are assigned a color and each color appears at most $\Delta$ times in the matrix. In this setting, we obtained two improved bounds. First, suppose that $\Delta \leq \frac{27 n}{256}$, and we are given a weighting function $w: [n] \times [n] \rightarrow \mathbb R_{\geq 0}$. (We use here the standard notation $[n] = \{1, \dots, n \}$.) In this case, we can find a Latin transversal with weight at most $\tfrac{5}{3} \sum w(x,y)/n$.  By contrast, the LLL-distribution would only show the existence of Latin transversal of weight $\tfrac{16}{9} \sum w(x,y)/n$. Our second result is that when $\Delta > \frac{27 n}{256}$, then we construct partial Latin transversals which are larger than previous constructions in Harris \& Srinivasan \cite{mt-dist}.

\subsection{Comparison with other LLL sampling results}
Recently, a number of paper have shown that some variants of the MT algorithm can be used to sample (almost) uniformly from the LLL distribution for some problem instances \cite{moitra, guo}. In fact, in some cases the MT-distribution and the LLL-distribution are the same. Thus the MT algorithm can be used, for instance, to count the number of sink-free orientations in a graph.  These results are exciting, but we emphasize that our result is pointing in a different direction.

For many applications in combinatorics, the original, unconditioned probability space $\Omega$ is really the ``ideal'' probability space. The reason for this is that  the LLL-distribution is complex and we have little information about it, while $\Omega$ itself may be very simple. Of course, it is impossible to sample from $\Omega$ exactly and simultaneously avoid $\mathcal B$. However, our results show that the MT distribution has similar marginals to the original space $\Omega$, sometimes to a greater extent than the LLL distribution seems to. So it is almost as ``nice'' as $\Omega$, and arguably ``nicer'' than the LLL distribution itself.

\section{Background on the LLL}
\label{flll-sec}
To formally state the LLL, we define a \emph{dependency graph} for $\mathcal B$ to be a graph $G$ on vertex set $\mathcal B$, which satisfies the property that for all $B \in \mathcal B$ and all $S \subseteq \mathcal B - N(B)$ we have $\Pr_{\Omega}(B \mid \overline S) \leq \Pr_{\Omega}(B)$.\footnote{Such a graph is sometimes called a \emph{lopsidependency} graph because it is possible to have $\Pr_{\Omega}(B \mid \overline S) < \Pr_{\Omega}(B)$ strictly. The distinction will not be important for us so we use the simpler terminology.}  Here, we use the notation $N(B)$ for the \emph{inclusive} neighborhood of $B$ and we write $\overline S$ as a shorthand for the event $\bigcap_{B \in S} \overline B$.  We say that $I \subseteq \mathcal B$ is \emph{stable} if there are no distinct pairs $B, B' \in I$ with $B \in N(B')$; that is, $I$ is an independent set of $G$.

There is a natural choice for the dependency graph $G$ in the variable-assignment setting, by placing an edge on $B, B'$ if $B \sim B'$, that is, if $\text{var}(B) \cap \text{var}(B') \neq \emptyset$.

The dependency graph for the the permutation LLL is less obvious. Here, the probability space $\Omega$ is the uniform distribution on permutations $\pi$ from the symmetric group $S_n$.  A bad-event should be a monomial event, of the form $B \equiv \pi(x_1) = y_1 \wedge \dots \wedge \pi(x_k) = y_k$. For such bad-events, Erd\H{o}s \& Spencer \cite{erdos-spencer} showed that one can obtain a dependency graph $G$ by having an edge on events $B, B'$ if $\Pr_{\Omega}(B \cap B') = 0$.

Shearer \cite{shearer} stated the strongest possible criterion that can be given in terms of the dependency graph $G$ and probabilities $\Pr_{\Omega}$ for the bad-events, in order to ensure a positive probability that no bad-events occur. For any set $I \subseteq \mathcal B$, define
$$
Q_{\mathcal B} (I) = \sum_{\substack{J: I \subseteq J \subseteq \mathcal B\\\text{$J$ stable}}}(-1)^{|J| - |I|} \prod_{B \in J} \Pr_{\Omega}(B)
$$

Shearer showed that as long as $Q_{\mathcal B}(I) > 0$ for all stable sets $I$ then $\Pr(\overline {\mathcal B}) > 0$. We say in this case that \emph{Shearer's criterion is satisfied.} Furthermore, in this case $\Pr( \overline J ) \geq Q_{\mathcal B}(J)$ for all $J \subseteq \mathcal B$. Following \cite{kolipaka}, we define the \emph{measure} function $\mu: 2^{\mathcal B} \rightarrow [0,\infty)$ by
$$
\mu_{\mathcal B}(I) = \frac{ Q_{\mathcal B}(I) }{Q_{\mathcal B}( \emptyset) }
$$
for a subset $I \subseteq \mathcal B$. As the Shearer criterion is satisfied, $\mu_{\mathcal B}(I)$ is a well-defined and non-negative real number. If $I$ is not stable, then  $Q_{\mathcal B}(I) = 0$ and so $\mu_{\mathcal B}(I) = 0$.  For any $B \in \mathcal B$, we also write $\mu_{\mathcal B}(B) = \mu_{\mathcal B}( \{B \})$.

The Shearer criterion, while useful theoretically, is difficult to use in practice. Often, approximations are used. We summarize three of the most common ones here:
\begin{proposition}
\begin{enumerate}
\item (Symmetric LLL) If $\Pr_{\Omega}(B) \leq p$ and $|N(B)| \leq d$ for all $B$, where $e p d \leq 1$, then the Shearer criterion is satisfied and $\mu(B) \leq e p(B)$ for all $B$.

\item (Asymmetric LLL) If there is a weighting function $x: \mathcal B \rightarrow [0,1]$, satisfying the condition
$$
\forall B \in \mathcal B \qquad \Pr_{\Omega}(B) \leq x(B) \prod_{\substack{A \in N(B) - \{B \}}} (1 - x(A))
$$
then the Shearer criterion is satisfied, and $\mu(B) \leq \frac{x(B)}{1 - x(B)}$ for all $B$.

\item (Cluster-expansion criterion \cite{bissacot}) If there is a weighting function $\tilde \mu: \mathcal B \rightarrow [0,\infty)$ satisfying the condition
$$
\forall B \in \mathcal B \qquad \tilde \mu(B) \geq \Pr_{\Omega}(B) \sum_{\substack{I \subseteq N(B)\\\text{$I$ stable}}} \prod_{A \in I} \tilde \mu(A)
$$
then the Shearer criterion is satisfied, and $\mu(B) \leq \tilde \mu(B)$ for all $B$.
\end{enumerate}
\end{proposition}

The definition of dependency can be extended to any event $E$ (not necessarily in $\mathcal B$). We let $N(E) \subseteq \mathcal B$ be a set with the property that $\Pr_{\Omega}(E \mid \overline S) \leq \Pr_{\Omega}(E)$ for all sets $S \subseteq \mathcal B - N(E)$.  In the variable-assignment or permutation LLL settings, for instance, we define $N(E)$ to be the set of bad-events $B$ with $B \sim E$.

There is a connection between the Shearer criterion and LLL-distribution, which can be expressed by defining the following quantity:
$$
\Psi_{\mathcal B}(E) =  \sum_{J \subseteq N(E)}  \mu_{\mathcal B}(J) 
$$

We define $\mathcal B[E]$ to be the set of events $B \in \mathcal B$ such that $\Pr_{\Omega}(E \mid B) < 1$. In Appendix~\ref{mt-dist-result}, we show the following fundamental bound on the LLL distribution:
\begin{theorem}
\label{lll-dist-theta-thm}
For any event $E$, we have $\Pr_{\Omega} (E \mid \overline{\mathcal B}) \leq \Pr_{\Omega}(E) \Psi_{\mathcal B[E]}(E)$.
\end{theorem}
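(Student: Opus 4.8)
The plan is to establish the bound first for the special collection $\mathcal B[E]$ and then bootstrap to an arbitrary $\mathcal B$. Throughout I may assume $\theta_{\mathcal B[E]}(E) < 1$, since otherwise $P_\Omega(E \mid \overline{\mathcal B}) \le 1 \le \theta_{\mathcal B[E]}(E)$ and there is nothing to prove. I will also use the standard facts that every sub-collection of a Shearer-satisfying collection satisfies Shearer's criterion, and that $P_\Omega(\overline{\mathcal B}) \ge Q(\emptyset) > 0$; for a collection $\mathcal C$ I write $Q_{\mathcal C}$ for its Shearer polynomial.

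For the bootstrap step, write $\mathcal C = \mathcal B[E]$ and $\mathcal D = \mathcal B \setminus \mathcal C$. Each $B \in \mathcal D$ satisfies $P_\Omega(E \mid B) = 1$, i.e. $B \subseteq E$ up to a null set, so $\overline E \subseteq \overline{\mathcal D}$. Partitioning both $\overline{\mathcal C}$ and $\overline{\mathcal B} = \overline{\mathcal C} \cap \overline{\mathcal D}$ according to whether $E$ occurs, and using $\overline{\mathcal B} \cap \overline E = \overline{\mathcal C} \cap \overline E$, one obtains the identity
\[ P_\Omega(E \cap \overline{\mathcal B}) = P_\Omega(E \cap \overline{\mathcal C}) - P_\Omega(\overline{\mathcal C}) + P_\Omega(\overline{\mathcal B}). \]
Since $\mathcal C[E] = \mathcal C$, granting the theorem for the collection $\mathcal C$ gives $P_\Omega(E \cap \overline{\mathcal C}) \le \theta_{\mathcal C}(E) P_\Omega(\overline{\mathcal C})$ with $\theta_{\mathcal C}(E) = \theta_{\mathcal B[E]}(E) < 1$; combined with $P_\Omega(\overline{\mathcal C}) \ge P_\Omega(\overline{\mathcal B}) > 0$ (so that multiplying the inequality $P_\Omega(\overline{\mathcal C}) \ge P_\Omega(\overline{\mathcal B})$ by the negative number $\theta_{\mathcal C}(E)-1$ reverses it), the identity yields $P_\Omega(E \cap \overline{\mathcal B}) \le \theta_{\mathcal B[E]}(E) \, P_\Omega(\overline{\mathcal B})$, and dividing by $P_\Omega(\overline{\mathcal B})$ gives the claim.

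It remains to treat the case $\mathcal B = \mathcal B[E]$. Here the idea is to adjoin $E$ itself as a bad-event: let $B_\ast = E$, let $\mathcal B^\ast = \mathcal B \cup \{B_\ast\}$, and assign $B_\ast$ the neighborhood $N(E) \cup \{B_\ast\}$, which is a valid lopsidependency structure by the defining property of $N(E)$. A short inclusion-exclusion computation on the Shearer polynomial gives $Q_{\mathcal B^\ast}(\emptyset) = Q(\emptyset) - P_\Omega(E)\,Q_{\mathcal B \setminus N(E)}(\emptyset)$, and more generally that $\mathcal B^\ast$ satisfies Shearer's criterion provided $P_\Omega(E)$ lies below $Q(\emptyset)/Q_{\mathcal B \setminus N(E)}(\emptyset)$. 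Now an elementary summation identity shows $\Psi_{\mathcal B}(E) = \sum_{J \subseteq N(E)} \mu_{\mathcal B}(J) = Q_{\mathcal B \setminus N(E)}(\emptyset)/Q(\emptyset)$, so that threshold equals $1/\Psi_{\mathcal B}(E)$ and the assumption $\theta_{\mathcal B}(E) = P_\Omega(E)\Psi_{\mathcal B}(E) < 1$ certifies that $\mathcal B^\ast$ satisfies Shearer's criterion. Finally I invoke the relative form of Shearer's lower bound --- for $\mathcal B \subseteq \mathcal B^\ast$ both satisfying the criterion, $P_\Omega(\overline{\mathcal B^\ast}) \ge \tfrac{Q_{\mathcal B^\ast}(\emptyset)}{Q(\emptyset)} P_\Omega(\overline{\mathcal B})$ --- and since $\overline{\mathcal B^\ast} = \overline{\mathcal B} \cap \overline E$, substituting the formula for $Q_{\mathcal B^\ast}(\emptyset)$ and dividing through by $P_\Omega(\overline{\mathcal B})$ rearranges this exactly into $P_\Omega(E \mid \overline{\mathcal B}) \le P_\Omega(E)\Psi_{\mathcal B}(E) = \theta_{\mathcal B}(E)$.

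I expect the main obstacle to be the relative Shearer bound (and, relatedly, pinning down exactly when $\mathcal B^\ast$ satisfies Shearer's criterion, which needs the monotonicity of $Q_{\mathcal B^\ast}(I)/Q_{\mathcal B \setminus N(E)}(I)$ in the independent set $I$, with minimum at $I = \emptyset$). These are facts about Shearer's criterion itself rather than about $\Omega$; I would either cite them from the work of Shearer and Kolipaka--Szegedy, or prove the relative bound by induction on $|\mathcal B^\ast \setminus \mathcal B|$, reducing to the single added event and running Shearer's own inductive argument under the conditional measure $P_\Omega(\cdot \mid \overline{\mathcal B})$. If the abstract lopsidependency setting makes this awkward, a viable alternative is to couple $\Omega \mid \overline{\mathcal B}$ with a run of a resampling-type algorithm and bound the probability that $E$ holds at termination by $P_\Omega(E)$ times a weighted sum over witness trees rooted at $E$, which evaluates to $\Psi_{\mathcal B[E]}(E)$.
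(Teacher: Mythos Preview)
Your proposal is correct and arrives at the same bound, but the route for the core step differs from the paper's. Both arguments begin by reducing from $\mathcal B$ to $\mathcal C=\mathcal B[E]$; your identity $P_\Omega(E\cap\overline{\mathcal B})=P_\Omega(E\cap\overline{\mathcal C})-P_\Omega(\overline{\mathcal C})+P_\Omega(\overline{\mathcal B})$ is exactly the content of the paper's Bayes manipulation (since every $B\in\mathcal B\setminus\mathcal C$ is contained in $E$, conditioning on $\overline E$ kills those events). For the main case $\mathcal B=\mathcal B[E]$, however, the paper does \emph{not} adjoin $E$ as a new bad-event. Instead it conditions in two stages: first on $\overline{\mathcal B\setminus N(E)}$, using the defining lopsidependency property of $N(E)$ to bound the numerator $P_\Omega(E\wedge\overline{N(E)}\mid\overline{\mathcal B\setminus N(E)})\le P_\Omega(E)$, and then bounds the denominator $P_\Omega(\overline{N(E)}\mid\overline{\mathcal B\setminus N(E)})$ from below by $1/\Psi_{\mathcal B}(E)$ via the identity $\sum_{J\subseteq N(E)}Q(J)=Q_{\mathcal B\setminus N(E)}(\emptyset)$ together with Shearer's ratio monotonicity $P_\Omega(\overline{S_1})/\tilde Q(S_1)\le P_\Omega(\overline{S_2})/\tilde Q(S_2)$ for $S_1\subseteq S_2$. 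Your approach instead packages the same Shearer monotonicity as a ``relative Shearer bound'' $P_\Omega(\overline{\mathcal B^\ast})\ge\tfrac{Q_{\mathcal B^\ast}(\emptyset)}{Q(\emptyset)}P_\Omega(\overline{\mathcal B})$ applied to the enlarged system $\mathcal B^\ast=\mathcal B\cup\{E\}$. The payoff of the paper's route is that it never needs to verify that $\mathcal B^\ast$ satisfies Shearer's criterion (the step you flag as the main obstacle), so it avoids the auxiliary monotonicity of $Q_{\mathcal B^\ast}(I)/Q_{\mathcal B\setminus N(E)}(I)$; the payoff of yours is a cleaner conceptual picture in which the bound on $P_\Omega(E\mid\overline{\mathcal B})$ is literally an instance of the LLL itself with $E$ treated as one more bad-event.
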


Kolipaka \& Szegedy \cite{kolipaka} introduced a key analytical tool which ties together the Shearer criterion, LLL-distribution, and MT-distribution. Given stable sets $S_1, \dots, S_{\ell} \subseteq \mathcal B$, we say that $S = \langle S_1, \dots, S_{\ell} \rangle$ is a \emph{stable-set sequence} if $S_i \subseteq \bigcup_{B \in S_{i-1}} N(B)$ for all $i = 2, \dots, \ell$.  We define the \emph{weight} of $S$, denoted $w(S)$, by $w(S) = \prod_{i=1}^{\ell} \prod_{B \in S_i} \Pr_{\Omega}(B)$.  For any stable set $I \subseteq \mathcal B$, we define $\text{Stab}(I)$ to be the set of stable-set sequences of the form $\langle I, S_2, \dots, S_{\ell} \rangle$ (that is, the first set in the sequence is $I$), and we define $\text{Stab}(I) = \emptyset$ if $I$ is not stable. 
\begin{proposition}[\cite{kolipaka}]
\label{ftotal-weight-prop}
For any set $J \subseteq \mathcal B$, we have $\sum_{S \in \text{Stab}(J)} w(S) = \mu_{\mathcal B}(J)$.
\end{proposition}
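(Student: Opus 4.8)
The plan is to show that the generating function $\sum_{S \in \text{Stab}(J)} w(S)$ satisfies the same recursion and boundary behavior that pins down $\mu(J) = Q(J)/Q(\emptyset)$. First I would set up notation: for an independent set $V$, write $W(V) = \sum_{S \in \text{Stab}(V)} w(S)$, with $W(V)=0$ when $V$ is not independent. Decomposing a stable-set sequence $\langle V, S_2, \dots, S_\ell\rangle$ according to its second set $S_2$ (which may be empty, giving the length-one sequence $\langle V\rangle$), one gets the recursion
$$
W(V) = \Bigl(\prod_{B \in V} P_\Omega(B)\Bigr)\sum_{\substack{S_2 \subseteq \bigcup_{B \in V} N(B)\\ S_2 \text{ independent}}} W(S_2),
$$
where the empty $S_2$ contributes the term $1$. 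So $W$ is the unique solution of a fixed-point equation of the form $W(V) = \bigl(\prod_{B\in V}P_\Omega(B)\bigr)\,F\bigl(\{W(I)\}_{I}\bigr)$; uniqueness will follow because the iteration of this map, started from $0$, converges monotonically to the (finite, by Shearer) values $W(V)$ — each iterate being a partial sum over stable-set sequences of bounded length.

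The next step is to verify that $V \mapsto Q(V)/Q(\emptyset)$ satisfies the very same recursion. This is exactly the kind of identity established by Kolipaka--Szegedy; concretely, one uses the combinatorial identity relating $Q(V)$, the probabilities $P_\Omega(B)$, and sums of $Q$ over independent subsets of the neighborhood $\bigcup_{B \in V} N(B)$. I would derive this by inclusion--exclusion on the definition of $Q$: fix $V$, split an independent superset $J \supseteq V$ into $V$ together with $J \setminus V$, observe that independence of $J$ forces $J \setminus V$ to avoid $N(V) \setminus (\text{anything})$ appropriately, and re-index the alternating sum so that the "new" part of the sum reproduces $\sum Q(I)$ over independent $I$ contained in the neighborhood. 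Having shown both $W$ and $\mu$ obey the same contraction-type fixed-point equation with the same monotone-convergence guarantee, they must coincide, which is the claim.

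The main obstacle I anticipate is the bookkeeping in the second step — getting the neighborhood/independence conditions in the $Q$-recursion to line up exactly with the definition of a stable-set sequence (the condition $S_i \subseteq \bigcup_{B \in S_{i-1}} N(B)$, together with each $S_i$ independent but with no independence required \emph{across} consecutive sets). One has to be careful that the recursion for $W$ allows $S_2$ to intersect $V$ and to be "stacked" on top of it, whereas $Q$ is a sum over genuinely independent sets; the reconciliation is precisely where the telescoping in the alternating sum for $Q$ does its work. A secondary technical point is justifying the interchange of summations and the convergence of the iteration, which rests on Shearer's criterion ensuring $Q(\emptyset)>0$ and all the relevant series are absolutely convergent; since the excerpt permits us to assume Shearer's criterion holds, this should be routine.
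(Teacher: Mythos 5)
The paper does not actually prove this proposition --- it is quoted from Kolipaka--Szegedy without proof --- so I am measuring your proposal against the standard argument from that source and against the closely related computation the paper does carry out in Lemma~\ref{mu-lem}. Your overall strategy is the right one. The decomposition of a stable-set sequence by its second set correctly yields
$W(V) = \bigl(\prod_{B \in V} P_\Omega(B)\bigr)\sum_{I \subseteq N(V),\, I \text{ indep}} W(I)$
(the inclusiveness of $N$ disposes of your worry about $S_2$ meeting $V$), and the matching recursion $Q(V) = \prod_{B\in V}P_\Omega(B)\sum_{I\subseteq N(V),\, I \text{ indep}}Q(I)$ follows exactly as you sketch: factor $\prod_{B\in V}P_\Omega(B)$ out of the alternating sum defining $Q(V)$ to get $Q(V) = \prod_{B\in V}P_\Omega(B)\,\tilde Q(\mathcal B - N(V))$, then apply the identity $\sum_{I\subseteq S}Q(I)=\tilde Q(\mathcal B - S)$ with $S = N(V)$, which is literally the computation in the paper's Lemma~\ref{mu-lem}.

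The genuine gap is the uniqueness step. Monotone convergence of the iteration started from $0$ does not show that the fixed-point equation has a unique solution; it shows only that $W$ is the \emph{least} nonnegative solution. Writing the system as $f = Mf + b$ with $M \geq 0$ and $b(V) = \prod_{B\in V}P_\Omega(B)$, the iteration gives $W = \sum_{j\ge 0} M^j b \le \mu$ (this comparison is also what supplies the finiteness of $W$, which you attributed directly to Shearer), but nothing so far rules out $W < \mu$: a priori $M$ could have a nonnegative eigenvector with eigenvalue $1$, in which case the least fixed point is not the only one. The repair is short: $\delta = \mu - W \ge 0$ satisfies $\delta = M^k\delta$ for all $k$; since $\mathcal B$ is finite and (discarding probability-zero events) $b>0$ entrywise, there is a constant $c$ with $\delta \le c\,b$, whence $\delta = M^k\delta \le c\,M^k b \to 0$ because the series $\sum_j M^j b = W$ converges. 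Hence $\delta = 0$ and $W = \mu$. With that step added, your proof is complete and is essentially the Kolipaka--Szegedy argument.
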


Haeupler \& Harris \cite{hh} discussed a closely-related object called a \emph{witness DAG} (abbreviated wdag).  A wdag $G$ is defined to be a DAG, whose nodes are labeled by events in the probability space $\Omega$, such that for any two nodes $v_1, v_2$ labeled $E_1, E_2$, we have an edge (in either direction) on $v_1, v_2$ iff $E_1 \sim E_2$. We use $L(v)$ to denote the label of node $v$. We define the \emph{weight} of $G$ by $w(G) = \prod_{v \in G} \Pr_{\Omega}( L(v) )$.

We say that a wdag $G$ is \emph{proper} if all the node labels of $G$ come from $\mathcal B$. There is a one-to-one correspondence between proper wdags and stable-set sequences as discussed in \cite{hh}; for example, we have the following result:
\begin{proposition}[\cite{hh}]
\label{ac1}
For any set $I \subseteq \mathcal B$, the total weight of all proper wdags with sink nodes $v_1, \dots, v_s$ such that $I = \{L(v_1), \dots, L(v_s) \}$, is at most $\mu_{\mathcal B}(I)$.
\end{proposition}

Given a wdag $G$ and an event $E$, we say that a node $v \in G$ is \emph{relevant} to $E$ if there is a path in $G$ from $v$ to some node $w$ with $L(w) \sim E$. 

We let $\Gamma(A)$ denote the set of wdags $\tau$ with a single sink node labeled $A$; we call this sink node the \emph{root} of $\tau$.  For a wdag $G$ and an event $A$, we may define a new wdag $G * A \in \Gamma(A)$ via the following construction. We begin with the induced subgraph $G[X]$, where $X$ are the set of vertices in $G$ relevant to $A$, and we then add an additional node $r$ labeled $A$. Finally, we add an edge from each node $v$ to $r$ if $L(v) \sim A$.

\subsection{The Moser-Tardos algorithm}
\label{fmt-sec}

Moser \& Tardos in \cite{moser-tardos} described their MT algorithm to construct a configuration $X$ avoiding $\mathcal B$ for the variable-assignment LLL setting:

\begin{algorithm}[H]
\centering
\caption{The MT Algorithm}
\begin{algorithmic}[1]
\State  Draw $X \sim \Omega$. (We refer to this $X$ as the \emph{initial configuration})
\While{there is some true bad-event}
\State {Choose some true bad-event $B \in \mathcal B$ arbitrarily}
\State Resample all the variables involved in $B$ from the distribution $\Omega$.
\EndWhile
\end{algorithmic}
\end{algorithm}

\begin{proposition}[\cite{kolipaka}]
\label{fmt-main-prop}
Suppose that Shearer's criterion is satisfied. Then the MT algorithm terminates with probability one; the expected number of resamplings of any $B$ is at most $\mu(B)$.
\end{proposition}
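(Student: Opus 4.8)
The plan is to follow the resampling-table coupling of Moser--Tardos, in the refined form due to Kolipaka--Szegedy which squeezes the final constant down to exactly $\mu(B)$ rather than a Galton--Watson-type overestimate. First I would make the algorithm's randomness explicit via a \emph{resampling table} $R$: for each variable $i$, an infinite sequence $R_{i,1},R_{i,2},\dots$ of i.i.d.\ samples of $X(i)$ under $\Omega$; the initial configuration reads $R_{i,1}$ in each column, and the $j$-th time the value of variable $i$ is (re)set it equals $R_{i,j}$. The entries of $R$ are mutually independent, which is the only probabilistic input to the argument; the possibly adversarial choice of which true bad-event to resample at each step is handled by bounding uniformly over all such choices.

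Second, to each resampling step of the execution log I would attach a combinatorial \emph{witness}, obtained by scanning the log backwards from that step. In the basic Moser--Tardos form this witness is a tree rooted at the resampled event $B$, built by attaching each earlier logged event $B_s$ as a child of the deepest already-present node whose label is $\sim$-related to $B_s$ (with a fixed tie-breaking rule), discarding $B_s$ if no such node exists. The standard facts are: (i) distinct steps that resample $B$ produce distinct witnesses rooted at $B$, so $\mathbf{E}[\#B] = \sum_{\tau \text{ rooted at } B} \Pr[\tau \text{ occurs}]$; and (ii) since occurrence of $\tau$ pins down disjoint blocks of table entries, one block per node $v$ forcing the event $[v]$ to hold, independence of $R$ gives $\Pr[\tau \text{ occurs}] \le \prod_{v \in \tau} P_\Omega([v])$.

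Third, and this is the crux, I would pass from trees to \emph{stable-set sequences} to get the sharp constant. Flatten a witness tree $\tau$ to $\langle S_1,\dots,S_\ell\rangle$, where $S_i$ is the set of labels at depth $i$: the deepest-attachment rule forces $\sim$-related events to distinct depths, so each $S_i$ is independent, $S_1=\{B\}$, and $S_{i+1}\subseteq\bigcup_{B'\in S_i}N(B')$, i.e.\ $\langle S_1,\dots,S_\ell\rangle\in\mathrm{Stab}(\{B\})$, with $\prod_{v\in\tau}P_\Omega([v]) = w(\langle S_1,\dots,S_\ell\rangle)$. Summing $w(\tau)$ over \emph{all} proper trees rooted at $B$ only yields an upper bound of Galton--Watson type, which strictly exceeds $\mu(B)$ in general; the purpose of the Kolipaka--Szegedy refinement is to choose the witness object, together with the deterministic tie-breaking rule used to build it, so that the witnesses that can actually arise are charged in a weight-preserving way to distinct elements of $\mathrm{Stab}(\{B\})$ with no over-count. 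Granting that, Proposition~\ref{ftotal-weight-prop} with $J=\{B\}$ gives $\mathbf{E}[\#B] \le \sum_{S\in\mathrm{Stab}(\{B\})} w(S) = \mu(\{B\}) = \mu(B)$. I expect this reconciliation --- getting the count down to exactly $\mathrm{Stab}(\{B\})$ rather than an overcount of branching trees --- to be the main obstacle; everything else is the routine Moser--Tardos machinery. (An alternative to flag is a direct recursive argument showing $\mathbf{E}[\#B]$ obeys the Shearer recursion, but I expect it to be messier.)

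Finally, termination with probability one is immediate once the expectation bound holds: $\mathcal B$ is finite and each $\mathbf{E}[\#B]\le\mu(B)<\infty$, so the expected total number of resamplings $\sum_{B\in\mathcal B}\mathbf{E}[\#B]$ is finite; hence almost surely only finitely many resamplings occur and the while-loop exits.
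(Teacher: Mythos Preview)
Your overall architecture matches the paper's: resampling table, one witness object per resampling step, distinctness of witnesses, the weight bound on appearance probability, and then summing via Proposition~\ref{ftotal-weight-prop}. The one place you deviate is the choice of witness object, and that is exactly where you leave a gap.

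You build Moser--Tardos witness \emph{trees} and then flatten to stable-set sequences, and you correctly flag that summing $w(\tau)$ over all labelled trees rooted at $B$ overshoots $\mu(B)$. You then ``grant'' that the deterministic tie-breaking rule yields a weight-preserving injection from realizable trees into $\mathrm{Stab}(\{B\})$, but you do not prove it. This \emph{can} be proved --- with a fixed tie-breaking rule the tree produced by the backward scan is determined by its level sets, so realizable trees are exactly one canonical tree per stable-set sequence --- but it is precisely the work you skipped, and you yourself call it ``the main obstacle.''

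The paper sidesteps the obstacle by taking the witness object to be a \emph{witness DAG} rather than a tree: $\hat\tau^{i,B_i}$ is the subgraph of the full execution DAG $\hat G_i$ induced by nodes having a path to the $i$-th resampling. Proper witness DAGs with a single sink labeled $B$ are in bijection with $\mathrm{Stab}(\{B\})$ (the levels of the DAG \emph{are} the stable-set sequence, and conversely the DAG is recovered from the sequence by drawing all $\sim$-edges between consecutive layers), so $\sum_{\tau\in\Gamma(B)} w(\tau)=\mu(B)$ on the nose, with no charging argument needed. Distinctness of the $\hat\tau^{i,B_i}$ for different resamplings of $B$ is your step~(i), and $\Pr[\tau\text{ compatible with }R]=w(\tau)$ is your step~(ii). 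In short: replace ``tree'' by ``DAG'' (equivalently, take the stable-set sequence itself as the witness) and the step you granted becomes a tautology.
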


We define the \emph{MT-distribution} to be the distribution on the variables at the termination of the MT algorithm. We use $\pmt$ to refer to probabilities of events in this space.\footnote{Technically, the MT-distribution is not well-defined unless one specifies some fixed, possibly randomized, rule for selecting which bad-event to resample. In this paper, we assume that some such rule has been chosen arbitrarily; all of our results hold for any such choice of resampling rule.} Using ideas from \cite{kolipaka} and \cite{hss} one can show the following bound, which exactly matches Theorem~\ref{lll-dist-theta-thm}:
\begin{proposition}
\label{fmt-dist-prop1}
For any event $E$ in the probability space $\Omega$, we have $\pmt(E) \leq \Pr_{\Omega}(E) \Psi_{\mathcal B[E]}(E)$
\end{proposition}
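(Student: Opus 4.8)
The plan is to adapt the resampling-table argument of Moser--Tardos, in the refined ``stable-set sequence'' form of Kolipaka--Szegedy and Haeupler--Saha--Srinivasan, to the event $E$. First I would fix the standard coupling: to each variable $X(i)$ associate an infinite stack of i.i.d.\ samples from $\Omega$; the initial configuration reads the top of each stack, and each resampling of a bad event $B$ pops one fresh value from the stack of every variable in $\text{var}(B)$. Once the (possibly randomized) resampling rule is fixed, the entire execution---and in particular whether $E$ holds in the final configuration---is a deterministic function of this resampling table, and distinct (variable, stack-position) cells are mutually independent.

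The core step is to show that if $E$ holds at the termination of the MT algorithm, then the table admits a consistent \emph{witness tree rooted at $E$}. Concretely, imagine performing one additional ``phantom'' resampling of the variables $\text{var}(E)$ after the algorithm halts, and build the Moser--Tardos witness tree $\tau$ for this phantom step: its root is labelled $E$, every other node $v$ is labelled by a bad event $B_v$ that was actually resampled, the children of the root form an independent set contained in $N(E)$, the children of a node $v$ form an independent set contained in $N(B_v)$, and $\tau$ is \emph{consistent} with the table in the sense that the stack cells $\tau$ designates for each node's variables make that node's event true (making $E$ true at the root). Because those cells are independent, the probability that a fixed such tree $\tau$ is consistent is exactly $P_{\Omega}(E)\prod_{v\neq\text{root}}P_{\Omega}(B_v)$. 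A union bound over all properly-labelled witness trees rooted at $E$ then gives $\pmt(E)\le P_{\Omega}(E)\sum_{\tau}\prod_{v\neq\text{root}}P_{\Omega}(B_v)$.

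It remains to evaluate that sum. Grouping the trees by the label set $J\subseteq N(E)$ of the root's children, the trees with that top level are in weight-preserving correspondence with the stable-set sequences in $\text{Stab}(J)$ (the standard Kolipaka--Szegedy translation between witness forests and stable-set sequences), so their total weight is $\mu(J)$ by Proposition~\ref{ftotal-weight-prop}. Summing over all independent $J\subseteq N(E)$ yields $\sum_{J\subseteq N(E)}\mu(J)=\Psi(E)$, hence $\pmt(E)\le P_{\Omega}(E)\Psi(E)=\theta(E)$. To sharpen $\mathcal B$ to $\mathcal B[E]$, I would rerun the whole argument inside the sub-instance on $\mathcal B[E]$: a node whose label $B$ satisfies $P_{\Omega}(E\mid B)=1$ can be removed from the witness-tree analysis without affecting consistency, exactly as in the proof of Theorem~\ref{lll-dist-theta-thm}, so the top level may be taken within $N(E)\cap\mathcal B[E]$ and Proposition~\ref{ftotal-weight-prop} then produces $\mu_{\mathcal B[E]}(J)$, giving $\pmt(E)\le\theta_{\mathcal B[E]}(E)$.

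The main obstacle is the structural claim of the second paragraph: that ``$E$ true at termination'' genuinely produces a consistent witness tree, and that the construction can be confined to $\mathcal B[E]$. The subtlety is that $E$ is not itself a resampled event, so one must argue carefully---via the phantom-resampling device and the usual inductive unwinding of the execution log---that the stack positions $\tau$ designates are precisely the ones read off in the final configuration, and that passing to the sub-instance on $\mathcal B[E]$ breaks neither the consistency bookkeeping nor the weight identity. The remaining ingredients---independence of table cells, the union bound, and the weight computation---are routine once Proposition~\ref{ftotal-weight-prop} is in hand.
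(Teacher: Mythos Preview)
Your overall plan is the paper's: couple the MT run to a resampling table, build a witness structure rooted at $E$, note that compatibility with the table has probability exactly the product of the node weights, and sum via the stable-set-sequence identity to get $P_{\Omega}(E)\Psi(E)=\theta(E)$. That part is correct and matches the paper essentially line for line (the paper phrases it with witness DAGs $\hat\tau^{T,E}\in\Gamma(E)$ rather than trees, but the content is the same).

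The one place your proposal is shakier than the paper is the passage from $\theta(E)$ to $\theta_{\mathcal B[E]}(E)$. You suggest ``removing'' from the witness tree any node labelled by some $B$ with $P_{\Omega}(E\mid B)=1$, or equivalently ``rerunning inside the sub-instance on $\mathcal B[E]$.'' Neither of these is straightforward: deleting an interior node of a witness tree perturbs the stack indices of all its ancestors, so consistency is not obviously preserved; and restricting to the sub-instance $\mathcal B[E]$ changes the algorithm whose output distribution you are bounding. The paper sidesteps this entirely with a one-line observation: rather than waiting for termination, stop at the \emph{first} time $T$ at which $E$ holds and form $\hat\tau^{T,E}$ then. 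Every bad event $B$ resampled before time $T$ was true when resampled; if $B\notin\mathcal B[E]$ then $P_{\Omega}(E\mid B)=1$, so $E$ would already have been true at that earlier step, contradicting minimality of $T$. Hence every non-root label of $\hat\tau^{T,E}$ automatically lies in $\mathcal B[E]$, and the weight sum is over $\Gamma_{\mathcal B[E]}(E)$ with no pruning needed. If you adopt this ``first occurrence'' device in place of your removal step, your argument becomes complete and identical to the paper's.
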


The resampling table is another key analytical tool of \cite{moser-tardos} for analyzing the MT algorithm. It is constructed as follows. For each $i \in [n]$, we draw an infinite string of values $R(i,1), R(i,2), \dots $, each entry $R(i,j)$ being drawn independently according to the distribution $\Omega$. We then use this as our source of randomness during the MT algorithm: the initial configuration sets $X(i) = R(i, 1)$ for all $i$. Whenever we need to resample a variable $i$, we take the next entry from $R(i)$; for instance, on the first resampling of $X_i$, we would set $X_i = R(i,2)$. So the MT algorithm becomes essentially deterministic after we have drawn $R$.

The main connection between wdags, the resampling table, and the MT algorithms, comes from the following construction. Suppose that we run the MT algorithm (possibly not to completion), resampling bad events $B_1, \dots, B_T$. We define the \emph{full wdag} $\hat G_T$ to be a graph with nodes $v_1, \dots, v_T$ labeled $B_1, \dots, B_T$, and an edge from $v_i$ to $v_j$ if $i < j$ and $B_i \sim B_j$.

If some event $A$ is true at time $T$, we also define $\hat \tau^{T,A} \in \Gamma(A)$ as $\hat G_T * A$; we say that a wdag $\tau$ \emph{appears} if $\hat \tau^{T,A} = \tau$ for any value of $T$.

We next derive a necessary condition for a given wdag $\tau$ to appear, as a function of the resampling table $R$. For any node $v \in \tau$ and variable $i \in [n]$, define $\rho(\tau, v, i)$ to be the number of nodes $w \in \tau$ such that $w$ has a path to $v$ and $i \in \text{var}(L(w))$ (this includes $w = v$).  We also define the configuration $X_{\tau,v}^R$ by setting 
$$
X^R_{\tau,v}(i) = R(i, \rho(\tau, v, i)) \qquad \text{for all $i \in [n]$}
$$ 
We say that $\tau$ is \emph{compatible} with $R$, if for all nodes $v \in \tau$, the event $L(v)$ is true on the configuration $X^R_{\tau,v}$. For any set of wdags $T$, and a given resampling table $R$, we also define $T/R$ to be the set of wdags in $T$ compatible with $R$.

The following are two key results for these objects:
\begin{proposition}[\cite{moser-tardos,hh}]
\label{mtprop1}
\begin{enumerate}
\item If the MT algorithm runs with a given resampling table $R$, then every wdag $\hat \tau^{T,A}$ is compatible with $R$.
\item For any wdag $\tau$, the probability that resampling table $R$ is compatible with $\tau$ is exactly $w(\tau)$.
\end{enumerate}
\end{proposition}

These results give a simple proof of Proposition~\ref{fmt-main-prop}. For, suppose we run the MT algorithm with a random resampling table $R$ and resample events $B_1, \dots, B_t$. For each $i = 1, \dots, t$, consider the wdag $\hat \tau^{i, B_i}$; these are all distinct and we have $\hat \tau^{i, B_i} \in \Gamma(B_i)$. So the number of resamplings of $B$ is at most number of wdags in $\Gamma(B)$ compatible with $R$, and hence the expected number of resamplings of $B$ is at most $\sum_{\tau \in \Gamma(B)} w(\tau) = \mu_{\mathcal B}(B)$.

These results also give a simple proof of Proposition~\ref{fmt-dist-prop1}. For, suppose we run the MT algorithm with a random resampling table $R$ until some event $E$ occurs. (This may already be at the initial configuration). As soon as $E$ occurs, form the corresponding $\tau = \hat \tau^{T,E}$. Evidently, $\tau$ appears; also, all its nodes are labeled by $\mathcal B[E]$, because we stop this process the first time that $E$ becomes true. Hence, we have $\hat \tau^{T,E} \in \Gamma_{\mathcal B[E]}(E)$; the total weight of all such wdags is at most $\Pr_{\Omega}(E) \Psi_{\mathcal B[E]}(E)$.

\section{Disjunctive events in the variable-assignment setting}
\label{fmt-disj-sec}
Given a set of events $\mathcal A = \{A_1, \dots, A_m \}$, in the probability space $\Omega$, we define the disjunction event $\ora = A_1 \cup A_2 \cup \dots \cup A_m$. In this section we will show an upper bound on $\pmt(\ora)$, beyond the obvious union bound $\pmt(\ora) \leq \sum_{A \in \mathcal A} \pmt(A) \leq \sum_{A \in \mathcal A} \Pr_{\Omega}(A) \Psi_{\mathcal B[A]}(A)$.

For each $A \in \mathcal A$, define $T_A$ to be the set of wdags in $\Gamma(A)$ whose non-sink nodes have labels from $\mathcal B[ \ora]$. For $\tau \in T_A$, we define $\rot(\tau)$ to be the root node of $\tau$. 

\begin{proposition}
\label{fxx1}
Suppose that we run the MT algorithm with a given resampling table $R$ until the first time $t$ that $\ora$ becomes true. (If $\ora$ is true in the initial configuration, then $t = 0$). If $A \in \mathcal A$ is true at time $t$, then $\hat \tau^{t, A} \in T_A/R$.
\end{proposition}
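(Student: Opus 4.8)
The plan is to unwind the definition of $T_A^R$ into its three constituent requirements and check each: that $\hat\tau^{t,A}$ has a single sink labeled $A$ (i.e. belongs to $\Gamma(A)$), that its non-sink nodes carry labels from $\mathcal B[\ora]$, and that it is compatible with $R$. Two of the three are immediate from machinery already in place. First, $\hat\tau^{t,A}$ is by definition $\hat G_t * A$, and the $*$-construction was set up precisely so that $G * A \in \Gamma(A)$; the appended node $r$ (labeled $A$) is its unique sink. Second, since $A$ is true at time $t$ by hypothesis, Proposition~\ref{mtprop1} gives that $\hat\tau^{t,A}$ is compatible with $R$. Hence the entire content of the proposition is the middle requirement, concerning the labels of the non-sink nodes.

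For that, I would observe that the $*$-operation only deletes nodes of $\hat G_t$ (those not relevant to $A$) and then appends the single new sink $r$, so the non-sink nodes of $\hat\tau^{t,A}$ form a subset of the node set $\{v_1,\dots,v_t\}$ of $\hat G_t$, which is labeled by the resampled bad-events $B_1,\dots,B_t$. It therefore suffices to prove the slightly stronger statement that \emph{every} bad-event $B_i$ resampled at a step $i \le t$ lies in $\mathcal B[\ora]$, i.e. $P_{\Omega}(\ora \mid B_i) < 1$. The key timing observation is that $B_i$ was true in the configuration $Y$ present just before its resampling, namely at time $i-1$; since $t$ is by hypothesis the \emph{first} time that $\ora$ holds and $i-1 < t$, the disjunction $\ora$ is false on $Y$. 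Thus $Y$ certifies that $B_i$ and $\overline{\ora}$ are simultaneously satisfiable.

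The one genuinely non-bookkeeping step is upgrading this to $P_{\Omega}(B_i \cap \overline{\ora}) > 0$, and here the product structure of the variable-assignment setting is used. In this setting $B_i$ is a boolean function of $\text{var}(B_i)$ and $\ora = A_1 \cup \dots \cup A_m$ is a boolean function of $V := \bigcup_j \text{var}(A_j)$. Every coordinate $Y(k)$ is an entry of the resampling table $R$, drawn from the marginal of $\Omega$ on $X(k)$, hence lies in its support; so the cylinder event $\{\,X(k) = Y(k)\ \text{for all}\ k \in \text{var}(B_i)\cup V\,\}$ has strictly positive probability under $\Omega$, and it is contained in $B_i \cap \overline{\ora}$ because both events depend only on the coordinates it fixes and $Y$ satisfies $B_i \cap \overline{\ora}$. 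This gives $B_i \in \mathcal B[\ora]$, completing the middle requirement. The degenerate case $t=0$ is trivial: then $\hat G_0$ is empty, $\hat\tau^{0,A}$ is the single node $r$ with no non-sink nodes, and compatibility reduces to $A$ holding on the initial configuration, which is the hypothesis. I do not expect a real obstacle — the proposition is essentially a careful bookkeeping exercise parallel to the argument already used to prove Proposition~\ref{fmt-dist-prop1}, the only substantive ingredient being the short product-space positivity argument above.
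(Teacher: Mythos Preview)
Your proof is correct and takes essentially the same approach as the paper. You are more careful than the paper in justifying why the existence of a configuration satisfying $B_i \cap \overline{\ora}$ implies $P_\Omega(\ora \mid B_i) < 1$; the paper's proof leaves this product-space positivity step implicit, simply asserting that if $P_\Omega(\ora \mid B) = 1$ and $B$ was true at some earlier time then $\ora$ was true then as well.
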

\begin{proof}
First, $\hat \tau^{t, A}$ is compatible with $R$ by Proposition~\ref{mtprop1}.  Suppose that $\hat \tau^{t, A} \notin T_A$,  so that $\hat \tau^{t, A}$ contains a node labeled $B$ with $\Pr(\ora \mid B) = 1$. So $B$ was resampled at some time $j < t$. This implies that $\ora$ was also true at time $j$, contradicting minimality of $t$.
\end{proof}

\begin{proposition}
  \label{fxx2}
  For $A \in \mathcal A$ we have the bound $\sum_{\tau \in T_A} w( \tau - \rot(\tau) ) \leq \Psi_{\mathcal B[ \ora]} (A)$.
\end{proposition}
\begin{proof}
  To enumerate $T_A$, observe that $\tau - \rot(\tau)$ is a proper wdag, whose sink nodes have distinct labels $I$ such that $I \subseteq N(A)$. Furthermore, the labels of all the nodes in $\tau - \rot(\tau)$ must come from $\mathcal B[ \ora]$. The result then follows from Proposition~\ref{ac1}. 
\end{proof}

The key to our improved bound will be a construction relating $T_A$ to $T_{A'}$ for $A \neq A'$. Given $\tau \in T_A$ and $A' \in \mathcal A$,  we define a new wdag $\Phi_{A'}(\tau)$ by  changing the label of its root from $A$ to $A'$, and removing any remaining nodes which no longer have a path to the new root. Formally, we define:
$$
\Phi_{A'}(\tau) = (\tau - \rot(\tau)) * A'.
$$

\begin{proposition}
\label{ftrans-prop}
Let $R$ be a fixed resampling table and let $\tau \in T_A/R$ with root $r = \rot(\tau)$. If $A'$ is true on $X^R_{\tau, r}$, then $\Phi_{A'}(\tau) \in T_{A'}/R$.
\end{proposition}
\begin{proof}
Let $\tau' = \Phi_{A'}(\tau)$. Consider any node $v$ of $\tau - r$ with $L(v) = B$. If $v$ is present in $\tau'$, then every $w \in \tau$ with a $\tau$-path to $v$ will also appear in $\tau'$ with $\tau'$-path to $v$. Thus, for all $x \in \text{var}(B)$, we have $\rho(\tau, v, x) = \rho(\tau', v, x)$ and so $X_{\tau,v}^R(x) = X_{\tau', v}^R(x)$. As $B$ holds on $X_{\tau,v}^R$ it also holds on $X_{\tau', v}^R$.

Next, consider the root $r'$ of $\tau'$ labeled $A'$, and let $x \in \text{var}(A')$. Any non-root node $v$ of $\tau$ with $x \in L(v)$ will also be present in $\tau'$. Hence $\rho(\tau, r,x) = \rho(\tau', r', x)$. By hypothesis, $A'$ holds on $X_{\tau, r}^R$ so it must also hold on $X_{\tau', r'}^R$.
\end{proof}

\begin{proposition}
\label{fx4}
Suppose that we enumerate $\mathcal A$ in an arbitrary order as $\mathcal A = \{A_1, \dots, A_m \}$. Then 
$$
\pmt(\ora) \leq \sum_{j=1}^m \Pr_{\Omega}(A_j \wedge \neg A_1 \dots \wedge \neg A_{j-1}) \Psi_{\mathcal B[ \ora]} (A_j)
$$
\end{proposition}
\begin{proof}
 Suppose that we run the MT algorithm with a randomly-chosen resampling table $R$. If $\ora$ is true at some time $t$, then by Proposition~\ref{fxx1} one of the sets $T_{A_j}/R$ is non-empty. Let $j$ be \emph{minimal} such that $T_{A_j}/R \neq \emptyset$ and let $\tau \in T_{A_j}/R$ with root $r$. It must be that $A_1, \dots, A_{j - 1}$ are all false on $X_{\tau, r}^R$; for if $A_k$ is true on $X_{\tau,r}^R$ and $k < j$, then by Proposition~\ref{ftrans-prop} we would have $\Phi_{A_k}(\tau) \in T_{A_k}/R$, contradicting minimality of $j$.

So, we see that a necessary condition for $\ora$ to occur is that there is some $\tau \in T_{A_j}/R$ with root $r$ such that $A_1, \dots, A_{j - 1}$ are all false on $X_{\tau,r}^R$. Let us fix some  $\tau \in T_{A_j}$ with root $r$, and compute the probability that $\tau$ is compatible with $R$ \emph{and} all the events $A_1, \dots, A_{j-1}$ are false on $X_{\tau, r}^R$. As $X_{\tau, r}^R \sim \Omega$, this probability is $\Pr_{\Omega}(A_j \wedge \neg A_1 \dots \wedge \neg A_{j-1})$. Also, for each non-sink node $v \in \tau$, it must be that  $L(v)$ is true on $X_{\tau, v}^R$, which has probability exactly $\Pr_{\Omega}(L(v))$. 

Finally, observe that the conditions corresponding to disjoint nodes involve distinct entries of $R$. Hence, they are independent, and the total probability that $\tau$ satisfies all these conditions is exactly equal to their product. So the overall probability is 
$$
\Pr_{\Omega}(A_j \wedge \neg A_1 \dots \wedge \neg A_{j-1}) \times \prod_{v \in \tau - r} \Pr_{\Omega}(L(v))   = \Pr_{\Omega}(A_j \wedge \neg A_1 \dots \wedge \neg A_{j-1})  w(\tau - \rot(\tau))
$$

Putting these observations together, we see that
{\allowdisplaybreaks
\begin{align*}
\pmt(\ora) &\leq \sum_{j=1}^m \sum_{\tau \in T_{A_j}} \Pr(\text{$\tau$ compatible with $R$ and $A_1, \dots, A_{j-1}$ false on $X_{\tau,r}^R$}) \\
&\leq \sum_{j=1}^m \sum_{\tau \in T_{A_j}} w(\tau) \Pr_{\Omega}(A_j \wedge \neg A_1 \wedge \dots \wedge \neg A_{j-1}) \\
&\leq \sum_{j=1}^m \Psi_{\mathcal B[ \ora]}(A_j) \Pr_{\Omega}(A_j \wedge \neg A_1 \wedge \dots \wedge \neg A_{j-1}) \qquad \text{Proposition~\ref{fxx2}} \qedhere
\end{align*}
}
\end{proof}

\begin{corollary}
\label{fcor-a}
We have $\pmt(\ora) \leq \Pr_{\Omega}(\ora) (\max_{A \in \mathcal A} \Psi_{\mathcal B[\ora]} (A))$
\end{corollary}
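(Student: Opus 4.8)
The plan is to derive Corollary~\ref{fcor-a} directly from Proposition~\ref{fx4} by bounding each term in the sum and then telescoping. Fix the arbitrary enumeration $\mathcal A = \{A_1,\dots,A_m\}$ used in Proposition~\ref{fx4}. For each $j$, the idea is to bound the factor $\Psi_{\mathcal B[\ora]}(A_j)$ by $\max_{A \in \mathcal A} \Psi_{\mathcal B[\ora]}(A)$, which I will abbreviate as $\Psi^\star$; pulling this common factor out of the sum gives
$$
\pmt(\ora) \le \Psi^\star \sum_{j=1}^m P_{\Omega}(A_j \wedge \neg A_1 \wedge \dots \wedge \neg A_{j-1}).
$$

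The second and main step is to recognize the remaining sum as a telescoping decomposition of $P_{\Omega}(\ora)$. The events $E_j := A_j \wedge \neg A_1 \wedge \dots \wedge \neg A_{j-1}$ for $j = 1,\dots,m$ are pairwise disjoint: $E_j$ asserts $A_j$ while $E_k$ for $k < j$ asserts $\neg A_k$, hence $\neg A_j$ is not forced by $E_k$, but $E_j$ forces $\neg A_k$ so $E_j \cap E_k = \emptyset$ directly. Moreover their union is exactly $\ora$: any outcome in $\ora$ makes some $A_i$ true, and taking the \emph{least} such index $i$ puts that outcome in $E_i$. Therefore $\sum_{j=1}^m P_{\Omega}(E_j) = P_{\Omega}\bigl(\bigcup_j E_j\bigr) = P_{\Omega}(\ora)$, and combining with the display above yields the claim.

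I do not expect any genuine obstacle here — this is a short bookkeeping argument. The only point requiring a moment's care is confirming that the partition $\{E_j\}$ of $\ora$ is precisely the decomposition by "first true event," which is exactly the indexing structure already exploited in the proof of Proposition~\ref{fx4} (where $j$ is chosen minimal). So the corollary is really just Proposition~\ref{fx4} with the per-term $\Psi$ factors uniformly upper-bounded and the probabilities re-summed; one could even phrase it as: the right-hand side of Proposition~\ref{fx4} is a weighted average of the $\Psi_{\mathcal B[\ora]}(A_j)$ against a probability distribution supported on $\ora$, hence at most $P_{\Omega}(\ora)$ times the maximum weight.
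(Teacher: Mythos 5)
Your proof is correct and is essentially identical to the paper's: both bound each $\Psi_{\mathcal B[\ora]}(A_j)$ by the maximum, pull it out of the sum, and recognize that $\sum_j P_{\Omega}(A_j \wedge \neg A_1 \wedge \dots \wedge \neg A_{j-1}) = P_{\Omega}(\ora)$ because the $E_j$ form the ``first true event'' partition of $\ora$. The only difference is that you spell out the disjoint-decomposition step explicitly, which the paper leaves implicit.
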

\begin{proof}
Arbitrarily enumerate $\mathcal A = \{A_1, \dots, A_m \}$. By Proposition~\ref{fx4}:
\begin{align*}
\Pr(\ora) &\leq \sum_{j=1}^m \Psi_{\mathcal B[\ora]} (A_j) \Pr_{\Omega}(\neg A_1 \wedge \dots \wedge \neg A_{j-1} \wedge A_j) \\
& \leq (\max_i \Psi_{\mathcal B[A]} (A_j)) \sum_{j=1}^m \Pr_{\Omega}(\neg A_1 \wedge \dots \wedge \neg A_{j-1} \wedge A_j) =  \Pr_{\Omega}(\ora) (\max_{A \in \mathcal A} \Psi_{\mathcal B[\ora]}) \qedhere
\end{align*}
\end{proof}

Corollary~\ref{fcor-a} can be simplified in the symmetric LLL setting.
\begin{corollary}
Suppose that $\Pr_{\Omega}(B) \leq p$ and $|N(B)| \leq d$ for all $B \in \mathcal B$, and that $e p d \leq 1$. Suppose that $|N(A)| \leq r$ for each $A \in \mathcal A$. Then $\pmt(\ora) \leq \Pr_{\Omega}(\ora) (1 + ep)^r$.
\end{corollary}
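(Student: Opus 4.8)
The plan is to read this off from Corollary~\ref{fcor-a}, which already states $\pmt(\ora) \le P_{\Omega}(\ora)\,\bigl(\max_{A \in \mathcal A}\Psi_{\mathcal B[\ora]}(A)\bigr)$. Discarding any $A \in \mathcal A$ with $P_{\Omega}(A) = 0$ (which changes neither side of the desired inequality), it thus suffices to prove that $\Psi_{\mathcal B[\ora]}(A) \le (1+ep)^{r}$ for every $A \in \mathcal A$.

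To bound $\Psi_{\mathcal B[\ora]}(A)$ I would invoke the cluster-expansion criterion with the constant weighting $\tilde\mu(B) = ep$. First one checks this criterion from Section~\ref{flll-sec}: for each $B$, since $P_{\Omega}(B) \le p$ and $|N(B)| \le d$,
$$
P_{\Omega}(B)\!\!\sum_{\substack{I \subseteq N(B)\\ I \text{ independent}}}\!\!\prod_{B' \in I}\tilde\mu(B') \;\le\; p\!\!\sum_{I \subseteq N(B)}\!\!(ep)^{|I|} \;=\; p\,(1+ep)^{|N(B)|} \;\le\; p\,(1+ep)^{d},
$$
and the hypothesis $epd \le 1$ gives $(1+ep)^{d} \le (1+\tfrac1d)^{d} < e$, so the right side is at most $pe = ep = \tilde\mu(B)$. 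In particular this re-derives Shearer's criterion, so all of the earlier machinery applies. Because $\tilde\mu$ is constant and passing to a sub-collection only deletes bad-events and their edges, $\tilde\mu \equiv ep$ is equally a cluster-expansion certificate for the collection $\mathcal B[\ora]$. Hence the cluster-expansion bound on $\theta$ quoted in Section~\ref{flll-sec}, used with the collection $\mathcal B[\ora]$, the weighting $\tilde\mu \equiv ep$, and the event $A$, gives
$$
P_{\Omega}(A)\,\Psi_{\mathcal B[\ora]}(A) = \theta_{\mathcal B[\ora]}(A) \le P_{\Omega}(A)\!\!\sum_{\substack{I \subseteq N_{\mathcal B[\ora]}(A)\\ I \text{ independent}}}\!\!(ep)^{|I|} \le P_{\Omega}(A)\,(1+ep)^{|N_{\mathcal B[\ora]}(A)|} \le P_{\Omega}(A)\,(1+ep)^{r},
$$
using $|N_{\mathcal B[\ora]}(A)| \le |N(A)| \le r$. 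Dividing by $P_{\Omega}(A) > 0$ yields $\Psi_{\mathcal B[\ora]}(A) \le (1+ep)^{r}$, and Corollary~\ref{fcor-a} then completes the proof.

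The only place I would tread carefully is the bookkeeping of which bad-event collection $\Psi$ (and $N$) refer to: Corollary~\ref{fcor-a} produces $\Psi_{\mathcal B[\ora]}$ rather than $\Psi_{\mathcal B}$, so the cluster-expansion certificate has to be valid for $\mathcal B[\ora]$, and both that fact and the neighbourhood bound $|N_{\mathcal B[\ora]}(A)| \le |N(A)|$ must point the right way — they do, since restricting to a sub-collection only removes bad-events and their edges. The remaining content is just the identity $\sum_{I \subseteq S}(ep)^{|I|} = (1+ep)^{|S|}$ together with the elementary estimate $(1+\tfrac1d)^{d} < e$, so I expect no genuine analytic difficulty here.
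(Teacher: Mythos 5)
Your proof is correct and follows essentially the same route as the paper: the paper's one-line argument also rests on the fact that $epd\le 1$ makes $\tilde\mu\equiv ep$ a valid (cluster-expansion) certificate, so that $\mu(B)\le ep$, and then bounds the sum over independent subsets of $N(A)$ by $\prod_{B\sim A}(1+\mu(B))\le(1+ep)^r$. Your extra care with the subcollection $\mathcal B[\ora]$ and with $P_{\Omega}(A)=0$ (needed only because you route through $\theta$ and divide, rather than bounding $\Psi$ directly) is sound but not a different method.
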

\begin{proof}
In the symmetric LLL setting, $\mu_{\mathcal B}(B) \leq e p$ for every $B \in \mathcal B$. Hence, for any $A$, we have $\Psi_{\mathcal B}(A) \leq \prod_{B \sim A} (1 + \mu_{\mathcal B}(B)) \leq (1 + e p)^r$.
\end{proof}

\section{Singleton events in the variable-assignment setting}
\label{fmt-sing-sec}
A \emph{singleton event} is an event of the form $A \equiv X(i) \in D$,  where $D$ is a subset of possible values for variable $i$. Often the existence of configurations with good global weight properties depends on bounding certain singleton events; for example, Harris \& Srinivasan \cite{partial-resampling} bounded the probability of singleton events in the LLL-distribution to show the existence of independent transversals with low average weight (see also Section~\ref{find-trans-sec}). 

Let us fix some $A \equiv X(i) \in D$, and define $\mathcal B'$ to the set of bad-events $B \in \mathcal B[A]$ with $i \in \text{var}(B)$. Our goal is to show an upper bound on $\pmt(A)$. We can immediately observe that $\mathcal B'$ is a clique of the dependency-graph, and so 
$\Psi_{\mathcal B[A]}(A) = 1 + \sum_{B \in \mathcal B'} \mu_{\mathcal B[A]}(B)$. Consequently, Proposition~\ref{fmt-dist-prop1} immediately gives the simple bound
\begin{equation}
\label{sb1}
\Pr_{\Omega}(A \mid \overline{\mathcal B}) \leq \Pr_{\Omega}(A) ( 1 + \sum_{B \in \mathcal B'} \mu_{\mathcal B[A]}(B) ),
\end{equation}
and the same bound holds for the MT distribution.  We will obtain a stronger bound for the MT-distribution in this section, using a similar proof strategy to the one for disjunctive events in  Section~\ref{fmt-disj-sec}.

For each $B \in \mathcal B$, define $T_B$ to be the set of wdags $\tau \in \Gamma(A)$ with the following structure: $\tau$ has a single sink node $r$ labeled $A$, and a node $s$ labeled $B$ with an edge to $r$; all other nodes $w \in \tau$ which involve variable $i$ have an edge to $s$. We refer to $r$ as the \emph{root} of $\tau$ and $s$ as the \emph{semi-root} of $\tau$. Equivalently, $T_B$ is the set of wdags of the form $(G * B) * A$.

\begin{proposition}
\label{gxx1}
Suppose that we run the MT algorithm with some resampling table $R$ until the first time $t \geq 0$ that $A$ becomes true. If $t > 0$ then $\hat \tau^{t, A} \in T_B/R$ for some $B \in \mathcal B'$.
\end{proposition}
\begin{proof}
First, $\hat \tau^{t, A}$ is compatible with $R$ by Proposition~\ref{mtprop1}. As $A$ is not true initially, it must become true by resampling some $B \in \mathcal B'$. Since this is the first time $A$ is true, it must be that $B \in \mathcal B[A]$. Hence $\hat \tau^{t,A} \in T_B$.
\end{proof}

\begin{proposition}
  \label{gxx2}
    For $B \in \mathcal B$ we have the bound $\sum_{\tau \in T_B} w( \tau - \rot(\tau) - \srot(\tau) ) \leq \Psi_{\mathcal B[A]} (B)$.
\end{proposition}
\begin{proof}
 To enumerate $T_B$, observe that any $\tau \in T_B$ has root node $r$ labeled $A$ and semi-root node $s$ labeled $B$. The graph $\tau - r - s$ is a wdag whose sink nodes are labeled by some stable set $I \subseteq N_{\mathcal B[A]} (B)$. The result then follows from Proposition~\ref{ac1}.
\end{proof}

As in Section~\ref{fmt-disj-sec}, given $\tau \in T_B$ with root $r$ and semi-root $s$ and $B' \in \mathcal B$, we define $\Phi_{B'}(\tau)$ by changing the label of the semi-root from $B$ to $B'$,  and removing any remaining nodes which no longer have a path to the semi-root. Formally:
$$
\Phi_{B'}(\tau) = ( (\tau - \rot(\tau) - \srot(\tau)) * B' ) * A
$$

\begin{proposition}
\label{gtrans-prop}
Let $R$ be a given resampling table and let $\tau \in T_B/R$. If $B'$ is true on $X_{\tau, \srot(\tau)}^R$, then $\Phi_{B'}(\tau) \in T_{B'}/R$.
\end{proposition}
\begin{proof}
Let us write $\tau' = \Phi_{B'}(\tau), r = \rot(\tau), s = \srot(\tau), r' = \rot(\tau'), s' = \srot(\tau')$.

For a node $v \in \tau' - r' - s'$, we can see that $\rho(\tau, v, x) = \rho(\tau', v, x)$ for any $x \in \text{var}(L(v))$; thus $L(v)$ holds on $X_{\tau', v}^R$ since it holds on $X_{\tau, v}^R$.

For the root, observe that $B$ and $B'$ both involve variable $i$, and so again $\rho(\tau, r, i) = \rho(\tau', r, i')$. This is the only variable affecting event $A$, since it is a singleton; so again $A$ holds on $X_{\tau', r'}^R$ since it holds on $X_{\tau, r}^R$.

Finally, for the semi-root, we see that $\rho(\tau, s, x) = \rho(\tau', s', x)$ for every $x \in \text{var}(B')$. By hypothesis, $B'$ holds on $X_{\tau, s}^R$ so it holds on $X_{\tau', s'}^R$.
\end{proof}

The following is our main result for the MT distribution here. Note that it is always at least as strong as the LLL-distribution bound (\ref{sb1}). 
\begin{theorem}
\label{gx4}
Suppose that we enumerate $\mathcal B'$ in an arbitrary order as $\mathcal B' = \{B_1, \dots, B_m \}$. Then 
$$
\pmt(A) \leq \Pr_{\Omega}(A) \Bigl( 1 + \sum_{j=1}^m \Pr_{\Omega}(B_j \wedge \neg B_1 \dots \wedge \neg B_{j-1}) \Psi_{\mathcal B[A]} (B_j) \Bigr)
$$
\end{theorem}
\begin{proof}
  Consider running the MT algorithm with a randomly chosen resampling table $R$. If $A$ becomes true at a some time $t > 0$, then by Proposition~\ref{gxx1} one of the sets $T_{B_j}/R$ is non-empty. Let $j$ be \emph{minimal} such that $T_{B_j}/R \neq \emptyset$ and let $\tau \in T_{B_j}/R$ with semi-root $s$. It must be that $B_1, \dots, B_{j - 1}$ are all false on $X_{\tau, s}^R$; for if $B_k$ is true on $X_{\tau, s}^R$ and $k < j$, then by Proposition~\ref{gtrans-prop} we would have $\Phi_{B_k}(\tau) \in T_{B_k}/R$, contradicting minimality of $j$.

So, a necessary condition for $A$ to occur is that either $A$ holds on the initial configuration, or there is some $\tau \in T_{B_j}/R$ with semi-root $s$ such that $B_1, \dots, B_{j - 1}$ are all false on $X_{\tau,s}^R$. Let us fix some  $\tau \in T_{B_j}$ with root $r$ and semi-root $s$, and compute the probability that $\tau$ is compatible with $R$ \emph{and} all the events $B_1, \dots, B_{j-1}$ are false on $X_{\tau, s}^R$. As $X_{\tau, s}^R \sim \Omega$, this probability is $\Pr_{\Omega}(B_j \wedge \neg B_1 \dots \wedge \neg B_{j-1})$. Also, for each non-sink node $v \in \tau$, it must be that  $L(v)$ is true on $X_{\tau, v}^R$. Finally, the event $A$ must hold on $X_{\tau, r}^R$.

Since the events $B_1, \dots, B_m$ all involve variable $i$, the conditions corresponding to disjoint nodes involve distinct entries of $R$. Hence, they are independent, and the total probability that $\tau$ satisfies all these conditions is exactly equal to their product, i.e.
$$
\Pr_{\Omega}(B_j \wedge \neg B_1 \dots \wedge \neg B_{j-1}) \times \Pr_{\Omega}(A) \times \prod_{v \in \tau - r - s} \Pr_{\Omega} (L(v))   = \Pr_{\Omega}(A) \Pr_{\Omega}(\neg B_1 \wedge \dots \wedge \neg B_{j-1} \wedge B_j) w(\tau - r - s) 
$$

The probability that $A$ holds on the initial configuration is just $\Pr_{\Omega}(A)$, so in total we have 
{\allowdisplaybreaks
\begin{align*}
\pmt(A) &\leq \Pr_{\Omega}(A) + \sum_{j=1}^m \sum_{\tau \in T_{B_j}} \Pr(\text{$\tau$ compatible with $R$ and $B_1, \dots, B_{j-1}$ false on $X_{\tau, s}^R$}) \\
&\leq \Pr_{\Omega}(A) + \sum_{j=1}^m \sum_{\tau \in T_{B_j}} \Pr_{\Omega}(A) \Pr_{\Omega}(\neg B_1 \wedge \dots \wedge \neg B_{j-1} \wedge B_j) w(\tau - \rot(\tau) - \srot(\tau)) \\
&\leq \Pr_{\Omega}(A) \Bigl(1 + \sum_{j=1}^m \Pr_{\Omega}(\neg B_1 \wedge \dots \wedge \neg B_{j-1} \wedge B_j)  \Psi_{\mathcal B[A]}(B_j) \Bigr) \qquad \text{Proposition~\ref{gxx2}} \qedhere
\end{align*}
}
\end{proof}

We have two simpler corollaries.
\begin{corollary}
\label{gcor-a}
We have $\pmt(A) \leq \Pr_{\Omega}(A) \Bigl( 1 +  \Pr_{\Omega}(\vee \mathcal B')  \bigl(\max_{B \in \mathcal B'} \Psi_{\mathcal B[A]} (B)\bigr)\Bigr)$
\end{corollary}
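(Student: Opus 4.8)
The plan is to obtain this directly from Proposition~\ref{gx4}, in exactly the same way that Corollary~\ref{fcor-a} was obtained from Proposition~\ref{fx4}. First I would fix an arbitrary enumeration $\mathcal B' = \{B_1, \dots, B_m\}$ and invoke Proposition~\ref{gx4}, which gives
$$
\pmt(A) \leq P_{\Omega}(A) \Bigl( 1 + \sum_{j=1}^m P_{\Omega}(B_j \wedge \neg B_1 \dots \wedge \neg B_{j-1}) \Psi_{\mathcal B[A]} (B_j) \Bigr).
$$

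Next I would bound each factor $\Psi_{\mathcal B[A]}(B_j)$ uniformly by $\max_{B \in \mathcal B'} \Psi_{\mathcal B[A]}(B)$ and pull this constant out of the sum, leaving $\sum_{j=1}^m P_{\Omega}(B_j \wedge \neg B_1 \dots \wedge \neg B_{j-1})$. The only remaining observation is that this sum is precisely the standard telescoping decomposition of a union: the events $B_j \wedge \neg B_1 \wedge \dots \wedge \neg B_{j-1}$ are pairwise disjoint and their union over $j$ is $B_1 \cup \dots \cup B_m = \vee \mathcal B'$, so the sum equals $P_{\Omega}(\vee \mathcal B')$. Substituting back yields the stated bound.

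There is no real obstacle here; this is a routine specialization of Proposition~\ref{gx4}, parallel to the proof of Corollary~\ref{fcor-a}. The only thing to be a little careful about is that the bound in Proposition~\ref{gx4} is symmetric in $A$ but not in the ordering of $\mathcal B'$, yet the final expression $P_{\Omega}(\vee\mathcal B')\max_{B\in\mathcal B'}\Psi_{\mathcal B[A]}(B)$ is order-independent, so the enumeration can indeed be chosen arbitrarily.

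\begin{proof}
Arbitrarily enumerate $\mathcal B' = \{B_1, \dots, B_m\}$. By Proposition~\ref{gx4},
\begin{align*}
\pmt(A) &\leq P_{\Omega}(A) \Bigl( 1 + \sum_{j=1}^m P_{\Omega}(B_j \wedge \neg B_1 \dots \wedge \neg B_{j-1}) \Psi_{\mathcal B[A]} (B_j) \Bigr) \\
&\leq P_{\Omega}(A) \Bigl( 1 + \bigl(\max_{B \in \mathcal B'} \Psi_{\mathcal B[A]}(B)\bigr) \sum_{j=1}^m P_{\Omega}(B_j \wedge \neg B_1 \dots \wedge \neg B_{j-1}) \Bigr) \\
&= P_{\Omega}(A) \Bigl( 1 + \bigl(\max_{B \in \mathcal B'} \Psi_{\mathcal B[A]}(B)\bigr) P_{\Omega}(\vee \mathcal B') \Bigr),
\end{align*}
where in the last line we used that the events $B_j \wedge \neg B_1 \wedge \dots \wedge \neg B_{j-1}$ for $j = 1, \dots, m$ are pairwise disjoint with union $B_1 \cup \dots \cup B_m = \vee \mathcal B'$.
\end{proof}
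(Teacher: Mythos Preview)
Your proof is correct and essentially identical to the paper's own argument: enumerate $\mathcal B'$, apply Proposition~\ref{gx4}, pull out the maximum of $\Psi_{\mathcal B[A]}(B_j)$, and recognize the remaining sum as the telescoping decomposition of $P_{\Omega}(\vee \mathcal B')$.
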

\begin{proof}
Enumerate $\mathcal B' = \{B_1, \dots, B_m \}$ and use Theorem~\ref{gx4}:
\begin{align*}
\pmt(A) &\leq \Pr_{\Omega}(A) \bigl( 1 + \sum_{j=1}^m \Pr_{\Omega}(B_j \wedge \neg B_1 \dots \wedge \neg B_{j-1}) \Psi_{\mathcal B[A]} (B_j) \bigr) \\
& \leq \Pr_{\Omega}(A) \Bigl( 1 + (\max_j \Psi_{\mathcal B[A]} (B_j)) \sum_{j=1}^m \Pr_{\Omega}(\neg B_1 \wedge \dots \wedge \neg B_{j-1} \wedge B_i) \Bigr) \\
&= \Pr_{\Omega}(A) \bigl( 1 + (\max_{B \in \mathcal B'} \Psi_{\mathcal B[A]} (B)) \Pr_{\Omega}( \vee \mathcal B' ) \qedhere
\end{align*}
\end{proof}

\begin{corollary}
Suppose that $\Pr_{\Omega}(B) \leq p$ and $|N(B)| \leq d$ for all $B \in \mathcal B$, and that $e p d \leq 1$. Then $\pmt(A) \leq \Pr_{\Omega}(A) (1 + e^{e p d} \Pr_{\Omega}(\vee \mathcal B'))$.
\end{corollary}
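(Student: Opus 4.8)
The plan is to obtain this directly from Corollary~\ref{gcor-a}, which already gives
\[
\pmt(A) \le P_{\Omega}(A)\Bigl(1 + P_{\Omega}(\vee\mathcal B')\bigl(\max_{B\in\mathcal B'}\Psi_{\mathcal B[A]}(B)\bigr)\Bigr),
\]
so it suffices to prove the purely numerical estimate $\Psi_{\mathcal B[A]}(B)\le e$ for every $B\in\mathcal B'$. This mirrors the symmetric-LLL corollary for disjunctive events, where $\Psi$ of the root event was bounded by a product over a neighborhood; the same computation is carried out here for each candidate semi-root $B$.

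The key step is to exhibit an explicit cluster-expansion weighting, namely the constant function $\tilde\mu\equiv ep$. First check that $\tilde\mu$ satisfies the cluster-expansion criterion for $\mathcal B$: for any $B'\in\mathcal B$,
\[
P_{\Omega}(B')\sum_{\substack{I\subseteq N(B')\\ I\text{ independent}}}\prod_{B''\in I}\tilde\mu(B'')\;\le\; p\sum_{I\subseteq N(B')}(ep)^{|I|}\;=\;p\,(1+ep)^{|N(B')|}\;\le\;p\,(1+ep)^{d},
\]
and since $e p d \leq 1$ we have $(1+ep)^{d}\le(1+1/d)^{d}<e$, so this is at most $ep=\tilde\mu(B')$. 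The same $\tilde\mu$ a fortiori satisfies the criterion for any sub-collection of $\mathcal B$, in particular for $\mathcal B[A]$, since restricting to a sub-collection only shrinks each neighborhood and hence only decreases the right-hand side. By the results of \cite{harvey,hh} recalled above, for $\mathcal C\in\{\mathcal B,\mathcal B[A]\}$ this gives $\theta_{\mathcal C}(E)\le P_{\Omega}(E)\sum_{I}\prod_{B''\in I}\tilde\mu(B'')$ for every event $E$, the sum ranging over independent sets $I\subseteq N_{\mathcal C}(E)$.

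Applying this with $E=B$ and $\mathcal C=\mathcal B[A]$ then yields, for each $B\in\mathcal B'$,
\[
\Psi_{\mathcal B[A]}(B)=\frac{\theta_{\mathcal B[A]}(B)}{P_{\Omega}(B)}\;\le\;\sum_{\substack{I\subseteq N_{\mathcal B[A]}(B)\\ I\text{ independent}}}(ep)^{|I|}\;\le\;(1+ep)^{|N_{\mathcal B}(B)|}\;\le\;(1+ep)^{d}\;<\;e,
\]
using $N_{\mathcal B[A]}(B)\subseteq N_{\mathcal B}(B)$ and $|N_{\mathcal B}(B)|\le d$. Substituting $\max_{B\in\mathcal B'}\Psi_{\mathcal B[A]}(B)\le e$ into Corollary~\ref{gcor-a} gives $\pmt(A)\le P_{\Omega}(A)(1+eP_{\Omega}(\vee\mathcal B'))$. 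No serious obstacle is expected; the only points requiring care are the elementary bound $(1+1/d)^{d}<e$ and the observation that the cluster-expansion criterion---and therefore the bound on $\Psi$---is inherited by the sub-collection $\mathcal B[A]$ without enlarging any of the relevant neighborhoods.
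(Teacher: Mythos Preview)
Your proof is correct and follows essentially the same approach as the paper's: both start from Corollary~\ref{gcor-a} and reduce to showing $\Psi_{\mathcal B[A]}(B)\le e$ for every $B\in\mathcal B'$, obtained by bounding $\Psi$ by $(1+ep)^{d}$ and then using $epd\le1$. The paper's version is just a bit terser---it invokes the known symmetric-LLL bound $\mu(B)\le ep$ directly rather than re-verifying the cluster-expansion criterion, and it uses $(1+ep)^d\le e^{epd}\le e$ in place of your $(1+1/d)^d<e$---but the content is the same.
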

\begin{proof}
In the symmetric LLL setting, $\mu_{\mathcal B}(B) \leq e p$ for every $B \in \mathcal B$. Hence, for $B \in \mathcal B'$, we have $\Psi_{\mathcal B}(B) \leq \prod_{B' \sim B} (1 + \mu_{\mathcal B}(B')) \leq (1 + e p)^d \leq e^{e p d}$.
\end{proof}

\section{The MT-distribution for $k$-SAT assignments}
\label{ksat-sec}
The probability space $\Omega^*$ consisting of $n$ independent fair coins can be considered the ``ideal'' probability space on $\{0, 1\}^n$. One powerful tool used to measure the divergence of a probability space $\Omega$ on $\{0, 1\}^n$ from $\Omega^*$ is the notion of \emph{$\epsilon$-approximate $j$-independence}, introduced by \cite{naor}. We say that $\Omega$ is $\epsilon$-approximately $j$-independent if, for any indices $1 \leq i_1 < \dots < i_j \leq n$ and any bits $y_1, \dots, y_j \in \{0, 1 \}^n$, we have
\begin{equation}
\label{approx-formula}
\bigl| \Pr_{\Omega} \bigl ( X(i_1) = y_1 \wedge \dots \wedge X(i_j) = y_j \bigr) - 2^{-j} \bigr| \leq \epsilon
\end{equation}

Many randomized algorithms and constructions can replace a supply of independent bits with a probability space possessing $\epsilon$-approximate $j$-independence, for appropriate values of $\epsilon$ and $j$. 

We next show that (under appropriate conditions) the MT-distribution approximates $\Omega^*$, in the sense that the MT-distribution is $j$-wise $\epsilon$-approximate independent.  Thus, for algorithmic applications which require limited independence, the MT-distribution could be used as the randomness source. Furthermore, the MT-distribution appears to be asymptotically closer to $\Omega^*$ than  the LLL-distribution (at least as far as we can tell using the generic bounds known for the LLL-distribution).

Concretely, we consider the application of the LLL to a $k$-SAT problem. Here, we are given a conjunction of $m$ clauses in $n$ binary variables. Each clause is a disjunction of $k$ distinct literals, i.e. it has the form $X(i_1)= y_1 \vee \dots \vee X(i_k) = y_k$. We wish to find an assignment of the values $X(1), \dots, X(n)$ which simultaneously satisfies all clauses. A classical application of the LLL, and the MT algorithm, is to $k$-SAT instances in which each variable appears at most $L$ times. In particular, Harris \cite{harris-llll} shows that the bound $L \leq \frac{2^{k+1} (1 - 1/k)^k}{k-1} - \frac{2}{k}$ suffices and Gebauer, Szab\'{o}, Tardos \cite{gst} shows that this is asymptotically tight (up to second-order terms).

For smaller values of $L$, we can find a satisfying assignment by applying the LLL, where the space $\Omega$ is defined by setting $\Pr(X(i) = 0) = \Pr(X(i) = 1) = 1/2$. For each clause $C$, we have a bad-event that $C$ is violated. This bad-event has probability $p = 2^{-k}$, and depends upon at most $L k$ others. Thus, as long as $L \leq \frac{2^k}{e k}$, the symmetric LLL criterion is satisfied, and the MT algorithm finds a satisfying assignment. Furthermore, for each bad-event $B$, we have $\mu_{\mathcal B}(B) \leq e p = e 2^{-k }$.

\begin{theorem}
\label{dist-thm}
Let $1 \leq j \leq k$ and $L \leq \frac{2^k}{e k}$. The MT-distribution is $\epsilon$-approximate, $j$-wise independent for $\epsilon = e L 2^{-k}$.
\end{theorem}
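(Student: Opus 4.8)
The plan is to bound $\pmt(E)$ from above and below separately, where $E$ denotes the event $X(i_1)=y_1 \wedge \cdots \wedge X(i_j)=y_j$; since $P_{\Omega}(E)=2^{-j}$, it suffices to prove $2^{-j}-\epsilon \le \pmt(E) \le 2^{-j}+\epsilon$ for $\epsilon = eL2^{-k}$. I will use two facts repeatedly: every variable occurs in at most $L$ clauses; and any induced subinstance of $\mathcal B$ (in particular $\mathcal B[E]$ and $\mathcal B[\overline E]$) is again a symmetric-LLL instance with clause probability $2^{-k}$ and degree at most $kL \le 2^k/e$, so $\mu(B) \le e2^{-k}$ for each of its bad-events.

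For the upper bound I would apply Proposition~\ref{fmt-dist-prop1}: $\pmt(E) \le \theta_{\mathcal B[E]}(E) = 2^{-j}\,\Psi_{\mathcal B[E]}(E)$. Taking $N(E)$ to be the set of clauses containing one of $i_1,\dots,i_j$, we have $|N(E)| \le jL$, and hence (using $\Psi(E') \le \prod_{B \in N(E')}(1+\mu(B))$, exactly as in the symmetric-LLL corollaries of Section~\ref{fmt-disj-sec}) $\Psi_{\mathcal B[E]}(E) \le (1+e2^{-k})^{jL}$. Since $jLe2^{-k} \le kLe2^{-k} \le 1$, the inequalities $(1+x)^N \le e^{Nx}$ and $e^u-1 \le (e-1)u$ on $[0,1]$ give $\Psi_{\mathcal B[E]}(E)-1 \le (e-1)\,jLe2^{-k}$; and since $(e-1)j < 2j \le 2^j$ for every $j \ge 1$, this is at most $2^j\,eL2^{-k}$. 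Therefore $\pmt(E) \le 2^{-j}\bigl(1+2^jeL2^{-k}\bigr) = 2^{-j}+\epsilon$.

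For the lower bound, the crucial step is to express $\overline E$ as a union of \emph{singleton} events rather than of disjoint conjunctions: $\overline E = A_1 \cup \cdots \cup A_j$, where $A_l$ is the singleton event $X(i_l) = \overline y_l$. Applying Corollary~\ref{fcor-a} gives $\pmt(\overline E) \le P_{\Omega}(\overline E)\cdot\max_l \Psi_{\mathcal B[\overline E]}(A_l)$. Because $A_l$ involves only the variable $i_l$, the set $N(A_l)$ of clauses containing $i_l$ is a clique in the dependency graph and has at most $L$ members; so (exactly as in Proposition~\ref{psi-prop}) every independent subset of $N(A_l)$ is empty or a singleton, whence $\Psi_{\mathcal B[\overline E]}(A_l) = 1 + \sum_{B \in N(A_l)\cap\mathcal B[\overline E]}\mu_{\mathcal B[\overline E]}(B) \le 1+Le2^{-k}$. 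As $P_{\Omega}(\overline E)=1-2^{-j}$, this yields $\pmt(\overline E) \le (1-2^{-j})(1+eL2^{-k}) \le (1-2^{-j})+\epsilon$, and hence $\pmt(E) = 1-\pmt(\overline E) \ge 2^{-j}-\epsilon$.

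I expect the lower bound, and specifically the choice of decomposition of $\overline E$, to be the main obstacle. The natural partition of $\overline E$ into disjoint conjunctions $X(i_1)=y_1\wedge\cdots\wedge X(i_{l-1})=y_{l-1}\wedge X(i_l)=\overline y_l$ produces events on $l$ variables whose dependency neighborhoods are not cliques and whose $\Psi$-factors can be as large as a constant $\approx e$; this gives only the worthless estimate $\pmt(\overline E) \lesssim e(1-2^{-j})$, and passing to the sharper Proposition~\ref{fx4} in place of Corollary~\ref{fcor-a} does not help, since with disjoint events both bounds reduce to $\sum_l 2^{-l}\Psi(\cdot)$. Recasting $\overline E$ as a union of single-variable events is what makes each $N(A_l)$ a clique of size $\le L$, collapsing the $\Psi$-factor to $1+O(L2^{-k})$, while the elementary identity $\sum_l P_{\Omega}(A_l \wedge \overline{A_1}\wedge\cdots\wedge\overline{A_{l-1}}) = P_{\Omega}(\overline E) = 1-2^{-j} < 1$ prevents the appearance of a spurious factor of $j$ (which is precisely the loss incurred by the generic LLL-distribution bounds). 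For the upper bound the only non-routine ingredient is the numerical fact $(e-1)j \le 2^j$, which comfortably absorbs the crude estimate $|N(E)| \le jL$.
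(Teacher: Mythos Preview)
Your proposal is correct and follows essentially the same route as the paper. For the lower bound you write $\overline E$ as a union of the singleton events $A_l \equiv X(i_l)=\overline y_l$, apply Corollary~\ref{fcor-a}, and use that $N(A_l)$ is a clique of size at most $L$ so that $\Psi(A_l)\le 1+Le\,2^{-k}$; this is exactly the paper's argument. For the upper bound the paper uses the slightly sharper $\Psi(E)\le(1+Le\,2^{-k})^{j}$ (grouping the clauses by which of the $j$ variables they touch) rather than your $(1+e\,2^{-k})^{jL}$, but both are at most $e^{jLe\,2^{-k}}$ and lead to the same final inequality $2^{-j}j(e-1)eL2^{-k}\le eL2^{-k}$ via $(e-1)j\le 2^{j}$.
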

\begin{proof} 
Observe that for any singleton event $A \equiv X(i) = y$, a stable set of neighbors of $A$ is either the empty set, or is a singleton set consisting of a clause involving variable $i$. Thus, we have $\Psi_{\mathcal B}(A) \leq 1 + L e p$, where we recall that $p = 2^{-k}$. Similarly, for any monomial event $A$ in $r$ variables, we have $\Psi_{\mathcal B}(A) \leq (1 + L e p)^r$.

Now consider some $j$-tuple $i_1, \dots, i_j$ and some arbitrary $y_1, \dots, y_j \in \{0, 1\}^n$. Let $E$ be the event $X(i_1) = y_1 \wedge \dots \wedge X(i_j) = y_j$. We want to show $2^{-j} - \epsilon \leq \Pr(E) \leq 2^{-j} + \epsilon$. 

For the upper bound:
\begin{align*}
\pmt( E) &\leq 2^{-j} \Pr_{\Omega}(E) \Psi_{\mathcal B}( E ) \leq 2^{-j} (1 + L e p)^j \leq 2^{-j} e^{L j e 2^{-k}} \leq 2^{-j} + 2^{-j} e (e-1) j L 2^{-k}
 \end{align*}
 
For the lower bound:
\begin{align*}
\pmt(E) &= 1 - \pmt( X(i_1) = 1 - y_1 \vee \dots \vee X(i_j) = 1 -y_j) \\
&\geq 1 - \Pr_{\Omega}( X(i_1) = 1 - y_1 \vee \dots \vee X(i_j) = 1 -y_j) (1 + L e p) \qquad \text{Corollary~\ref{fcor-a}} \\
&= 1 - (1 - 2^{-j}) (1 + L e p) \geq 2^{-j} - e L 2^{-k}
\end{align*}

In total, the MT-distribution is $\epsilon$-approximately $j$-wise independent for 
\[
\epsilon = \max( 2^{-j} j e (e-1) L 2^{-k}, e L 2^{-k}) = e L 2^{-k}. \qedhere
\]
\end{proof}

We do not know to what extent these results hold for the LLL-distribution. If we directly use Proposition~\ref{flll-dist-prop-simple}, then the best  lower-bound we can obtain is 
\begin{align*}
\Pr(E \mid \overline{\mathcal B}) &= 1 - \Pr( X(i_1) = 1 - y_1 \vee \dots \vee  X(i_j) = 1 -y_j \mid \overline{\mathcal B}) \\
&\geq 1 - \Pr_{\Omega}( X(i_1) = 1 - y_1 \vee \dots \vee X(i_j) = 1 -y_j) (1 + L e p)^j \qquad \text{Proposition~\ref{flll-dist-prop-simple}} \\
&= 1 - (1 - 2^{-j}) (1 + L e p)^j \geq 2^{-j} - ( e^{e j L p} - 1) \geq 2^{-j} - e^2 j L p
\end{align*}
This only leads to $\epsilon$-approximate, $j$-wise independence for $\epsilon = \Theta(j L 2^{-k})$. Thus, it appears that the MT-distribution is asymptotically closer in variation distance to $\Omega^*$, compared to the LLL-distribution.
\subsection{Minimum implicate size for boolean formulas} 
Given a boolean formula $\Phi$, we say that a clause $C$ is a \emph{non-trivial implicate} if $\Phi \models C$ but $C$ is not a tautology; that is, any solution to $\Phi$ also satisfies the clause $C$. The problem of determining implicates of a formula $\Phi$ has numerous connections to knowledge representation and artificial intelligence; see for instance \cite{darwiche}. Our MT-distribution results imply a simple bound on implicate size for CNF formulas. To the best of our knowledge, this property has not been studied before.

\begin{proposition}
\label{implicate-prop1}
Suppose that $\Phi$ is a CNF formula, in which every clause contains at least $k$ distinct literals, and in which each variable occurs in at most $L \leq \frac{2^k}{e k}$ clauses. Then every non-trivial implicate of $\Phi$ has size at least $k - \lfloor \log_2( e L ) \rfloor \geq \log_2 k$.
\end{proposition}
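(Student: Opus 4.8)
The plan is to prove the contrapositive: if $C$ is a non-tautological clause of size $t \le k - \lfloor \log_2(eL)\rfloor - 1$, then $\Phi$ has a satisfying assignment that falsifies $C$, so $C$ is not a non-trivial implicate. We may assume the literals $l_1,\dots,l_t$ of $C$ lie over distinct variables $i_1,\dots,i_t$: if two literals share a variable then $C$ is either a tautology (excluded) or contains a repeated literal that can be deleted, which only shrinks $C$.

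First I would put the instance into a form where the symmetric LLL applies cleanly, since the clauses of $\Phi$ need not all have size exactly $k$. Replace each clause $C_a$ of $\Phi$ by an arbitrary sub-clause $C_a'$ consisting of exactly $k$ of its literals, and set $\Phi' = \bigwedge_a C_a'$. Then $\Phi' \models \Phi$, so any satisfying assignment of $\Phi'$ satisfies $\Phi$, and each variable still occurs in at most $L$ of the clauses $C_a'$. Now run the MT algorithm on $\Omega^*$ (uniform bits) with the bad-events ``$C_a'$ is violated.'' Each such bad-event has probability $p = 2^{-k}$ and depends on at most $d = kL$ others, so $epd \le eLk2^{-k} \le 1$; hence (exactly as in the $k$-SAT setup of this section) Shearer's criterion holds, the MT algorithm terminates with probability one at an assignment satisfying $\Phi'$ and hence $\Phi$, and $\mu(B) \le ep = e2^{-k}$ for every bad-event $B$.

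Next, regard each literal $l_j$ as the singleton event $A_j \equiv (X(i_j) = y_j)$ on $\Omega^*$, let $\mathcal A = \{A_1,\dots,A_t\}$, so that the disjunctive event $\ora$ is ``$C$ is satisfied'' and $\neg C = \overline{\ora}$. By the reasoning of Proposition~\ref{psi-prop} (cf.\ Theorem~\ref{dist-thm}), an independent set of neighbours of $A_j$ is either empty or a single clause containing variable $i_j$, of which there are at most $L$; together with $\mu_{\mathcal B[\ora]}(B) \le \mu_{\mathcal B}(B) \le e2^{-k}$ (monotonicity of $\mu$ under passing to the sub-collection $\mathcal B[\ora]$), this gives $\Psi_{\mathcal B[\ora]}(A_j) \le 1 + Le2^{-k}$. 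Since $P_{\Omega^*}(\ora) = 1 - 2^{-t}$, Corollary~\ref{fcor-a} yields
$$
\pmt(\ora) \le (1-2^{-t})\,(1 + Le2^{-k}).
$$
From $t \le k - \lfloor\log_2(eL)\rfloor - 1$ and $\lfloor\log_2(eL)\rfloor > \log_2(eL) - 1$ we get $t < k - \log_2(eL)$, i.e.\ $2^t < 2^k/(eL)$, i.e.\ $Le2^{-k} < 2^{-t}$, and hence
$$
\pmt(\neg C) = 1 - \pmt(\ora) \ge 1 - (1-2^{-t})(1+2^{-t}) = 2^{-2t} > 0.
$$
Therefore $\neg C$ holds for a positive-probability set of resampling tables, and for each of those the terminal configuration is a satisfying assignment of $\Phi$ that falsifies $C$; thus $C$ is not a non-trivial implicate. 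Finally $k - \lfloor\log_2(eL)\rfloor \ge k - \log_2(eL) \ge k - \log_2(2^k/k) = \log_2 k$, using $L \le 2^k/(ek)$.

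The substantive point, and the only place the new machinery is used, is Corollary~\ref{fcor-a}: it charges the probability that $C$ is satisfied to $P_{\Omega^*}(\ora)$ times a \emph{single} factor $1 + Le2^{-k}$, rather than one such factor per literal of $C$ — which is all the generic bound (Proposition~\ref{fmt-dist-prop-simple}) gives, and which is far too lossy to push past $t \approx k/2$. The parts to get right are the bookkeeping around the dependency set and the measure $\mu$ — in particular the truncation step, which makes the symmetric criterion $epd \le 1$ valid for clauses of size $\ge k$ — and the elementary inequality linking $2^t$, $L$, and the floor; everything else is the arithmetic above.
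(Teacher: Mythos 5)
Your proof is correct and follows essentially the same route as the paper: the paper deletes literals to make every clause have size exactly $k$ and then invokes the lower-bound half of Theorem~\ref{dist-thm}, whose proof is precisely your application of Corollary~\ref{fcor-a} to the disjunction of the literals of $C$ with the single factor $1 + eL2^{-k}$. You have merely inlined that step (and handled the bookkeeping around $\mathcal B[\ora]$ and the floor function a bit more explicitly), so there is no substantive difference.
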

\begin{proof}
By deleting literals from clauses, we may assume without loss of generality that every clause contains exactly $k$ literals. If $C$ is an implicate of $\Phi$ containing $j$ variables $X(i_1), \dots, X(i_j)$, then this implies that there is some value $y_1, \dots, y_j$ for these variables which is impossible. In particular, since the MT-distribution is supported on satisfying assignments to $\Phi$, we would have $\pmt( X(i_1) = y_1 \wedge \dots \wedge X(i_j) = y_j) = 0$. By Theorem~\ref{dist-thm}, this is only possible if $e L 2^{-k} \geq 2^{-j}$, i.e. $j \geq k - \log_2 (e L).$
\end{proof}

The bound of Proposition~\ref{implicate-prop1} can easily be seen to be tight (up to a small additive term):
\begin{proposition}
\label{implicate-prop2}
For any integers $k, L \geq 1$, there is a CNF formula $\Phi$ in which each clause contains $k$ variables, each variable appears in at most $L$ clauses, and which has a non-trivial implicate of size $j = k - \lfloor \log_2 L \rfloor$.
\end{proposition}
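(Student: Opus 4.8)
The plan is to exhibit $\Phi$ as a single small gadget: a family of $k$-clauses whose conjunction logically implies one fixed clause of size $j$, arranged so that every variable occurs just few enough times. Write $d = \lfloor \log_2 L \rfloor$, so that $2^d \le L$, and set $j = k-d$. I assume $j \ge 1$; the case $j \le 0$ is trivial, since then $L \ge 2^k$ and the unsatisfiable $k$-CNF consisting of all $2^k$ sign patterns on $k$ variables (in which each variable occurs $2^{k-1} \le L$ times) has the empty clause as a non-trivial implicate.

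Introduce $j$ ``target'' variables $v_1,\dots,v_j$ and $d$ ``control'' variables $w_1,\dots,w_d$, so $j+d=k$. For each $\sigma \in \{0,1\}^d$ let
$$
C_\sigma \;=\; \bar v_1 \vee \dots \vee \bar v_j \vee \ell^\sigma_1 \vee \dots \vee \ell^\sigma_d,
\qquad \text{where } \ell^\sigma_i = \begin{cases} w_i & \text{if } \sigma_i = 0,\\ \bar w_i & \text{if } \sigma_i = 1, \end{cases}
$$
and put $\Phi = \bigwedge_{\sigma \in \{0,1\}^d} C_\sigma$. Then each $C_\sigma$ is a clause with exactly $k$ distinct literals on $k$ distinct variables, each $v_i$ occurs in all $2^d \le L$ of the clauses, and each $w_i$ occurs in all $2^d \le L$ of them; so $\Phi$ meets both structural requirements.

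The core of the argument is that $C := \bar v_1 \vee \dots \vee \bar v_j$ is a non-trivial implicate of $\Phi$, of size exactly $j$. It is not a tautology, being a disjunction of $j \ge 1$ distinct literals on distinct variables. To see $\Phi \models C$: let $\alpha$ be any assignment satisfying $\Phi$ and set $\sigma_i := \alpha(w_i)$; by the sign convention every control literal $\ell^\sigma_i$ is false under $\alpha$, so $\alpha$ can satisfy $C_\sigma$ only through one of the literals $\bar v_1, \dots, \bar v_j$, i.e.\ $\alpha \models C$. Moreover $\Phi$ is satisfiable (setting $v_1 = 0$ makes $\bar v_1$ true, hence satisfies every $C_\sigma$), so $C$ is a genuine non-trivial implicate; equivalently, the partial assignment $v_1 = \dots = v_j = 1$ extends to no solution of $\Phi$ although $\Phi$ has solutions.

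I do not expect a real obstacle here. The only points needing care are the sign convention --- it must be the one for which, for every assignment of the control variables, \emph{all} control literals of exactly one $C_\sigma$ are falsified --- and the inequality $2^d \le L$, which is immediate from $d = \lfloor \log_2 L\rfloor$. If one additionally wants $j$ to be the \emph{minimum} implicate size of this $\Phi$ rather than merely an attained value, one invokes Proposition~\ref{implicate-prop1} in the regime $L \le \tfrac{2^k}{ek}$, which bounds it below by $k - \lfloor \log_2(eL)\rfloor = j - O(1)$, matching up to the promised additive constant.
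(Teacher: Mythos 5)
Your construction is the same one the paper uses: adjoin to a fixed $j$-clause all $2^{\lfloor \log_2 L\rfloor}\le L$ sign patterns on $k-j$ fresh auxiliary variables, so that for every assignment of the auxiliaries exactly one clause forces the $j$-clause. The proposal is correct and matches the paper's proof, just written out in more detail (explicit verification of $\Phi\models C$, satisfiability, and the $j\le 0$ edge case).
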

\begin{proof}
Let $C$ be an arbitrary clause on $j$ variables and define the formula $\Phi$ by
$$
\Phi = \bigwedge_{\substack{y_1, \dots, y_{k-j} \in \{0, 1 \}^{k-j}}} (C \vee X(1) = y_1 \vee \dots \vee X(k-j) = y_{k-j})
$$
Clearly $\Phi \models C$ and  each clause has size $k$. Furthermore, the formula $\Phi$ contains $2^{k-j}$ clauses altogether. Thus, if we set $j = k - \lfloor \log_2 L \rfloor$, then $\Phi$ will obey the restriction on the maximum number of occurrences per variable.
\end{proof}

Note that although Proposition~\ref{implicate-prop1} was proved via the MT-distribution, the result itself is phrased purely in terms of boolean logic, without any reference to the LLL or probability theory.

\section{Independent transversals avoiding subsets of vertices}
\label{find-trans-sec}
Consider a graph  $G = (V,E)$ along with a partition of its vertices into $k$ blocks as $V = V_1 \sqcup \dots \sqcup V_k$. An \emph{independent transversal (IT)} $T$ of $G$ is an independent set of $G$, with the additional property that $|T \cap V_i| = 1$ for all $i$. This is also known as an \emph{independent system of representatives}. This combinatorial structure has received significant attention, starting in \cite{bollobas-erdos-szemeredi:isr}. Of particular important is obtaining sufficient conditions and algorithms for the existence of such independent transversals, as a function of the maximum degree $\Delta$ or other graph parameters.   

Haxell  \cite{haxell:struct-indep-set} showed that if each $V_i$ has size $b \geq 2 \Delta$, then an IT exists; this condition is existentially optimal as a function of $\Delta$ \cite{Szabo2006}.   This result however uses descent arguments which could take exponential time. More recently, an algorithm of Graf, Harris, \& Haxell \cite{graf} provides an efficient randomized algorithm to find an IT when the blocks have size $b \geq (2 + \epsilon) \Delta$ for arbitrary constant $\epsilon > 0$. Furthermore, a derandomization method of Harris \cite{harris-derand} can make this deterministic.

These algorithms are quite involved, and cannot be parallelized. By contrast, the LLL provides relatively straightforward constructions of ITs, subject to a slightly stronger constraint $b \geq 4 \Delta$. This is a motivating application of the cluster-expansion LLL criterion \cite{bissacot}. Furthermore, the MT algorithm provides a corresponding simple linear-time randomized algorithm, along with a parallel algorithm when $b \geq (4 + \epsilon) \Delta$ for constant $\epsilon > 0$. The variables in this case correspond to each block, and the value of variable $i$ is the vertex $v \in V_i \cap T$. For each edge, there is a bad-event that both end-points are selected.

To simplify the formulas throughout this section, let us define $\alpha = \sqrt{1 - 4 \Delta/b}$.

\begin{proposition}[\cite{bissacot}]
\label{ftrans-prop1}
Suppose $b \geq 4 \Delta$. Then the cluster-expansion LLL criterion is satisfied with $\tilde \mu(B) = \frac{4}{(1+\alpha)^2 b^2}$ for all $B \in \mathcal B$.
\end{proposition}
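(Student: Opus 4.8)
The plan is to verify the cluster-expansion LLL criterion of \cite{bissacot} (recalled in Section~\ref{flll-sec}) directly, using the constant weight function $\tilde\mu(B) = \alpha$ for every $B \in \mathcal B$. First I fix the probability space underlying the MT algorithm: the $k$ variables are indexed by the blocks, the value of variable $i$ is a uniformly random vertex of $V_i$, and the only bad-events of positive probability are the events $B_e$, one for each edge $e$ of $G$ joining two \emph{distinct} blocks, where $B_e$ asserts that both endpoints of $e$ are selected. Hence $P_\Omega(B_e) = p := 1/b^2$ for every such $e$, and in the canonical dependency graph $B_e \sim B_{e'}$ exactly when $e$ and $e'$ meet a common block. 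Since all the quantities $P_\Omega(B_e)$ equal $p$ and all the weights $\tilde\mu(B_e)$ equal $\alpha$, the cluster-expansion condition collapses to the single inequality
\[
\alpha \;\geq\; p \sum_{\substack{I \subseteq N(B_e)\\ \text{$I$ independent}}} \alpha^{|I|}
\]
for an arbitrary $e = (u,v)$ with $u \in V_p$, $v \in V_q$; the whole argument reduces to bounding this sum.

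The key (and only genuinely combinatorial) step is to show $\sum_{I} \alpha^{|I|} \le (1 + b\Delta\,\alpha)^2$, where $I$ ranges over independent subsets of $N(B_e)$. Such an $I$ is a set of edges, each meeting block $p$ or block $q$, no two meeting a common block. I would first argue $|I| \le 2$: an edge of $I$ running directly between $p$ and $q$ must be the sole member of $I$, since every other edge of $N(B_e)$ meets $p$ or $q$ and hence clashes with it; and if $I$ has no such edge then each edge of $I$ meets exactly one of $p, q$, so at most one meets $p$ and at most one meets $q$. For the counting: the number of eligible singletons is at most (\# edges meeting $p$) $+$ (\# edges meeting $q$) $\le 2 b\Delta$, because each of the $b$ vertices of a block has degree at most $\Delta$; and the number of eligible two-element sets is at most (\# edges meeting $p$ but not $q$) $\times$ (\# edges meeting $q$ but not $p$) $\le (b\Delta)^2$. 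Summing gives $1 + 2 b\Delta\,\alpha + (b\Delta)^2 \alpha^2 = (1 + b\Delta\,\alpha)^2$, so it suffices to have $\alpha \ge (1 + b\Delta\,\alpha)^2 / b^2$.

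Finally I solve this one-variable inequality. Writing $x = b\Delta\,\alpha$ and $c = b/\Delta$, it becomes $x^2 - (c-2)x + 1 \le 0$, which has a real solution iff $(c-2)^2 \ge 4$, i.e.\ iff $b \ge 4\Delta$; in that regime every $x$ between the two roots works, and the choice $x = \tfrac12\bigl((c-2) - \sqrt{(c-2)^2 - 4}\bigr) = \tfrac12\bigl((c-2) - \sqrt{c^2 - 4c}\bigr)$, translated back through $\alpha = x/(b\Delta)$ and $c = b/\Delta$, gives exactly $\alpha = \frac{b - \sqrt{b(b-4\Delta)} - 2\Delta}{2 b \Delta^2}$. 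I expect the only real work to be the bound on independent subsets of $N(B_e)$ — in particular the argument that $|I| \le 2$ and the clean factorization of the count as $(1 + b\Delta\,\alpha)^2$; everything after that is a routine quadratic computation.
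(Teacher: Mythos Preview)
Your proposal is correct and is exactly the standard verification of the cluster-expansion criterion for this problem; the paper does not spell out a proof (it simply cites \cite{bissacot}), but the bound $\sum_{I}\alpha^{|I|}\le(1+b\Delta\alpha)^2$ you derive is precisely the one used implicitly in the very next proposition (Proposition~\ref{fugly-prop}), and your quadratic computation recovers the stated $\alpha$.
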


In \cite{rabern}, Rabern considered a further question for such IT's: given a set of vertices $L \subseteq V$, when can we guarantee the existence of an IT satisfying $T \cap L = \emptyset$?  Rabern showed a general result for certain types of graphs where the block sizes could be different. We will consider here the case when all the blocks have a common size $b$. When $b \geq 2 \Delta$ and $|L| < b$, then the result of Rabern shows that such an IT exists. The condition on the size of $L$ is clearly optimal (otherwise one could choose $L$ to be equal to one of the blocks). The algorithms of \cite{graf} and \cite{harris-derand} could likewise provide corresponding polynomial-time algorithms to find such an IT, but again these algorithms are extremely complex, slow, and cannot be parallelized.

We analyze next how the MT algorithm is able to find such an IT avoiding $L$. Specifically, given some arbitrary set $L \subseteq V$, we show a lower bound on the probability $\pmt( L \cap T \neq \emptyset )$. We begin with a useful estimate when $L$ is a subset of a single block, based on arguments in \cite{partial-resampling}.
\begin{proposition}[\cite{partial-resampling}]
\label{fugly-prop}
Let $L$ be a subset of a block $V_r$. Define the event $E \equiv T \cap L \neq \emptyset$. Then
$$
\Psi_{\mathcal B[E]}(E) \leq \frac{2 b}{b + |L| + (b - |L|) \alpha}
$$
\end{proposition}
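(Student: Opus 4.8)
The plan is to apply Corollary~\ref{gcor-a} (the singleton-event bound), since $E \equiv T \cap L \neq \emptyset$ is precisely a singleton event on the block-variable $X(r)$: it asserts $X(r) \in L$, where we think of $X(r)$ as the chosen vertex of $V_r$. So I would first set $A \equiv X(r) \in L$ and identify $\mathcal B'$ as the set of bad-events $B \in \mathcal B[A]$ which involve variable $r$; these are exactly the edge bad-events with one endpoint in $L$. Then $\Psi_{\mathcal B[E]}(E) = 1 + \sum_{B \in \mathcal B'} \mu_{\mathcal B[E]}(B)$ by Proposition~\ref{psi-prop}, and since the cluster-expansion criterion holds with $\tilde\mu(B) = \alpha$ (Proposition~\ref{ftrans-prop1}), we have $\mu_{\mathcal B[E]}(B) \le \alpha$, giving $\Psi_{\mathcal B[E]}(E) \le 1 + \alpha |\mathcal B'|$. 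The number of bad-events in $\mathcal B'$ is at most $|L| \cdot \Delta$, one for each edge from a vertex of $L$ to a vertex in another block.

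Actually, I expect one has to be slightly more careful: the naive bound $\Psi \le 1 + \alpha|L|\Delta$ with $\alpha$ from Proposition~\ref{ftrans-prop1} will not match the claimed closed form, which has the structure of a fixed-point/generating-function bound rather than a crude count. So the better route is to recompute $\mu_{\mathcal B[E]}(B)$ directly for $B \in \mathcal B'$, restricting attention to the sub-instance of bad-events that are relevant. The point is that when we restrict to $\mathcal B[E]$ and then look at a $B \in \mathcal B'$, the cluster expansion around $B$ only sees (a) the clique $\mathcal B'$ of edges touching $L$ at the $V_r$-side, and (b) the at most $\Delta$ edges touching the other endpoint of $B$. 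One should redo the cluster-expansion fixed-point equation $\tilde\mu(B) \ge P_\Omega(B)\sum_{I \subseteq N(B),\, I\text{ indep}} \prod_{B'\in I}\tilde\mu(B')$ with a two-value ansatz: value $\beta$ for events in $\mathcal B'$ and value $\alpha$ for the rest. Solving the resulting quadratic for $\beta$ in terms of $b, \Delta, |L|$, and then plugging $\sum_{B\in\mathcal B'}\mu_{\mathcal B[E]}(B) \le |L|\Delta\,\beta$ (or more precisely summing over the clique), should produce exactly $\Psi_{\mathcal B[E]}(E) \le \frac{2b}{b + (b-|L|)\sqrt{1-4\Delta/b} + |L|}$ after simplification.

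The key steps, in order: (1) recognize $E$ as a singleton event and set up Proposition~\ref{psi-prop}, so the whole problem reduces to bounding $\sum_{B \in \mathcal B'}\mu_{\mathcal B[E]}(B)$; (2) set up the cluster-expansion criterion on the restricted instance $\mathcal B[E]$ with a two-parameter ansatz ($\beta$ on the clique $\mathcal B'$, $\alpha$ elsewhere), verifying the fixed-point inequality holds for every $B$ — this uses that $P_\Omega(B) = 1/b^2$ and that each $B$'s neighborhood decomposes into the clique part plus $\le \Delta$ independent ``far'' edges, whose independent subsets contribute a factor $(1+\alpha)^{\le\Delta}$ bounded as in Proposition~\ref{ftrans-prop1}; (3) solve the quadratic for $\beta$, choosing the root that makes $\tilde\mu$ minimal; (4) conclude $\Psi_{\mathcal B[E]}(E) \le 1 + |L|\Delta\beta$ and algebraically simplify to the stated form, checking the $|L|=1$ case recovers the Harris--Srinivasan bound and $|L|=b$ degrades gracefully.

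The main obstacle will be step (2)–(3): getting the two-value cluster-expansion ansatz to close consistently. The subtlety is that an event $B \in \mathcal B'$ has a neighbor set that mixes clique-members (other elements of $\mathcal B'$, which are pairwise dependent, so an independent subset picks at most one) with $\le \Delta$ ``outward'' edges from $B$'s non-$L$ endpoint (which behave like the original instance); one must verify the inequality $\beta \ge \frac{1}{b^2}\bigl(1 + (|\mathcal B'|-1)\beta\bigr)(1+\alpha)^{\Delta-1}$ — or a sharper version tracking exactly which neighbors lie in the clique — and then see that the solution matches the target. Keeping the bookkeeping of ``which neighbors are in $\mathcal B'$'' correct, and matching the resulting radical $\sqrt{1-4\Delta/b}$ to the $\alpha$ of Proposition~\ref{ftrans-prop1}, is where the real work lies; everything else is routine.
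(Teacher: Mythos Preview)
Your overall strategy---recognize $E$ as a singleton event, invoke Proposition~\ref{psi-prop} to reduce to $1+\sum_{B\in\mathcal B'}\mu_{\mathcal B[E]}(B)$, and verify a two-value cluster-expansion ansatz on the restricted instance---is exactly the paper's approach. But you have misidentified $\mathcal B[E]$ and hence $\mathcal B'$, and this error propagates through the rest of the sketch.

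Recall $\mathcal B[E]=\{B\in\mathcal B:\,P_\Omega(E\mid B)<1\}$, i.e.\ the bad-events that do \emph{not} force $E$. A bad-event $B_f$ says both endpoints of the edge $f$ are chosen; if $f$ has an endpoint in $L$, then $B_f$ forces $X(r)\in L$, so $B_f\notin\mathcal B[E]$. Thus $\mathcal B[E]$ consists of the edge bad-events with \emph{no} endpoint in $L$, and $\mathcal B'$ (those $B\in\mathcal B[E]$ involving variable $r$) is the set of edge bad-events with one endpoint in $V_r\setminus L$, not one endpoint in $L$ as you wrote. Consequently $|\mathcal B'|\le(b-|L|)\Delta$, not $|L|\Delta$, and your two-value ansatz should assign the special value $\alpha'$ to edges touching $V_r\setminus L$ (not to a ``clique $\mathcal B'$ of edges touching $L$,'' which has been removed entirely). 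With the correct identification, the fixed-point condition for $\alpha'$ reads
\[
\alpha' \;\ge\; \frac{1}{b^2}\,(1+b\Delta\alpha)\bigl(1+(b-|L|)\Delta\alpha'\bigr),
\]
where the factor $(b-|L|)$ reflects that only $b-|L|$ vertices in $V_r$ can participate in a bad-event of $\mathcal B[E]$. Solving this and plugging into $\Psi_{\mathcal B[E]}(E)\le 1+(b-|L|)\Delta\alpha'$ yields the stated closed form. Your step (4) with $|L|\Delta\beta$ would not simplify to the target; the $(b-|L|)$ appearing under the square root in the claim is precisely the trace of this correction.
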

\begin{proof}
In $\mathcal B[E]$, we have bad-events $B_f$ for every edge $f$ of $G$, except the edges which have an endpoint in $L$. We will apply the cluster-expansion criterion to $\mathcal B[E]$; we define $\tilde \mu(B_f) = \frac{4}{b^2 (1 + \alpha)^2}$ if edge $f$ does not involve block $V_r$, and $\tilde \mu(B_f) = \beta$ if edge $f$ involves a vertex of $V_r - L$, for some parameter $\beta \leq \frac{4}{b^2 (1 + \alpha)^2}$ to be specified. As there are at most $(b-|L|) \Delta$ edges which can participate in a bad-event of $\mathcal B[E]$, the cluster-expansion criterion is satisfied in this case if
$$
\beta  \geq \frac{1}{b^2} \times (1 + b \Delta \times \frac{4}{b^2(1+\alpha)^2})  (1 + (b - |L|) \Delta \beta) \\
$$
which is satisfied by
$$
\beta  = \frac{4}{(1+\alpha) b (b + |L| + (b - |L|) \alpha)}
$$

Since $E$ is a singleton event, the formula (\ref{sb1}) gives 
$$
\Psi_{\mathcal B[E]}(E) \leq 1 + \sum_{\substack{\text{edges $f$ involving} \\ \text{some vertex in $V_r - L$}}} \mu_{\mathcal B[E]}(B_f) \leq 1+ (b-|L|) \Delta  \beta.
$$

Simple algebraic manipulations show that this is the same as the stated formula.
\end{proof}

Our main result is now to bound $\pmt(L \cap T \neq \emptyset)$:
\begin{theorem}
  \label{ftrans-prop2}
 Suppose $b \geq 4 \Delta$. Then, for any $L \subseteq V$,
$$
\pmt (L \cap T \neq \emptyset) \leq \max \Bigl( \frac{2 |L|}{b + |L| + (b-|L|) \alpha}, \frac{2 (1 - e^{-|L|/b})} {1 + \alpha} \Bigr)
$$
\end{theorem}
\begin{proof}
  
Let us define $v = |L|/b$ and we assume that $v < 1$ as otherwise this holds vacuously.  Let us write $y_i = |L \cap V_i|/b$, and let define the function
  $$
f(y) = \frac{2 y}{1 + y + (1 - y) \alpha}
$$

  For each $i = 1, \dots, k$,  define  $A_i$ to be the event that $T \cap (L \cap V_i)\neq \emptyset$. Then note that $L \cap T \neq \emptyset$ is equal to the disjunctive event $A_1 \cup \dots \cup A_m$, and so by Proposition~\ref{fx4},
\begin{align*}
\pmt(L \cap T \neq \emptyset)  \leq \sum_{i=1}^k \Pr_{\Omega}(A_i \wedge \neg A_1 \dots \wedge \neg A_{i-1}) \Psi_{\mathcal B[A_i]} (A_i)  = \sum_{i=1}^k \Psi_{\mathcal B[A_i]} (A_i) \times  y_i \prod_{j=1}^{i-1} (1 - y_j) 
\end{align*}

Proposition~\ref{fugly-prop} shows that
$$
\Psi_{\mathcal B[A_i]}(A_i) \leq \frac{2 b}{b + |L \cap V_i| + (b - |L \cap V_i|) \alpha} = \frac{2 b}{b + y_i b + (b - y_i b) \alpha} = \frac{f(y_i)}{y_i}
$$

So if we define the function
$$
S(x_1, \dots, x_k) = \sum_{i=1}^k f(x_i) \prod_{j=1}^{i-1} (1 - x_j)
$$
then we have shown that
$$
\pmt(L \cap T \neq \emptyset) \leq S(y_1, \dots, y_k).
$$

Our task is now bound the function $S$.

Define parameter $z = \max(0, \frac{1 - 3 \alpha}{1 - \alpha} )$. By rearranging and sorting the blocks, we can ensure that $$
y_{\ell} \leq y_{\ell-1} \leq \dots \leq y_2 \leq y_1 \leq z \leq y_{\ell+1} \leq y_{\ell+2} \leq \dots \leq y_k,
$$
for some integer $\ell \leq k$. 

So, for any integers $\ell, n$, let us define $Q_{n,\ell}$ to be the set of vectors $(x_1, \dots, x_n) \in [0,1]^n$ which satisfy the following constraints:
\begin{enumerate}
\item[(A1)] $z \geq x_1 \geq \dots \geq x_{\ell} \geq 0$
\item[(A2)] $z \leq x_{\ell+1} \leq x_{\ell+2} \leq \dots \leq x_n$
\item[(A3)] $x_1 + x_2 + \dots + x_n = v$.
\end{enumerate}

We also define $Q_{n} = \bigcup_{\ell=0}^n Q_{n, \ell}$, and define $V_n$ to be the maximum value of $S(x)$ over all $x$ in the compact space $Q_n$. We know that the count vector $y$ is in $Q_k$. In Appendix~\ref{math-app}, we prove that
$$
V_k \leq \max\Bigl( \frac{2 v}{1 + v + (1-v) \alpha}, \frac{2 (1 - e^{-v})}{1 + \alpha} \Bigr)
$$
which establishes the claim. The proof of this fact requires some fine-grained technical analysis of the functions $f$ and $S$, but the intuition is that the maximizing vector $x$ has the form either $(0, \dots, 0, v)$ or $(v/k, \dots, v/k)$.
\end{proof}

\begin{corollary}
\label{fcor1}
Suppose $b \geq \frac{e^2}{e-1} \Delta \geq 4.30027 \Delta$. Let $L \subseteq V$ be an arbitrary vertex set. Then the MT algorithm outputs an IT which is disjoint to $L$ with probability $\Omega(1 - |L|/b)$.
\end{corollary}
\begin{proof}
Let $v = |L|/b$ and $r = e^{-v}$. We assume $|L| < b$ as otherwise this holds vacuously. Note that the condition on $b, \Delta$ implies $\alpha \geq 1 - 2/e$.

  By Proposition~\ref{ftrans-prop2}, there is a probability of at least $1 - \max( \frac{2 v}{(1 + v + (1 - v) \alpha}, \frac{2 (1 - e^{-v})}{1 + \alpha} )$ that the MT-distribution produces such an IT. We need to show that both of these quantities are $\Omega(1 - v)$.   For this first quantity, we have
  $$
  1 - \frac{2 v}{1 + v + (1 - v) \alpha} = (1 - v) \frac{ 1 + \alpha }{1 + \alpha + v(1-\alpha)} \geq (1-v) \frac{1+\alpha}{2}
  $$

  For the second quantity, we have
  $$
  1 - \frac{2 (1 - e^{-v})}{1 + \alpha} = (1 - v) \frac{ 2 e^{-v} + \alpha - 1 }{ (1 + \alpha)(1-v)} = (1-v) \frac{2 r + \alpha - 1}{(1+\alpha) (1 + \ln r)}
    $$

  Since function $\ln r$ is concave-down and $r \geq 1/e$, we have $\ln r \leq e r - 2$ and hence $\frac{2 r + \alpha - 1}{(1+\alpha) (1 + \ln r)} \geq \frac{2 r + \alpha - 1}{(1+\alpha) (e r - 1)}$. This is a rational function of $\alpha$ and $r$, and since $r \in [1/e,1], \alpha \geq 1 - 2/e$ it is at least $\frac{1}{e - 1}$; in particular it is $\Omega(1)$.
  \end{proof}

\section{The Permutation LLL}
\label{fperm-lll-sec}
Recall that in the permutation LLL setting, we select a permutation $\pi$ uniformly from $\Omega = S_n$. There is a collection $\mathcal B$ of monomial bad-events, which all have the form $B \equiv \pi(x_1) = y_1 \wedge \dots \wedge \pi(x_r) = y_r$, for chosen values $(x_1, y_1), \dots, (x_r, y_r)$. For such an event, we say that $B$ \emph{demands} $(x,y)$ if $\pi(x) = y$ is a necessary condition for $B$ to hold, i.e. $(x_i, y_i) = (x,y)$ for some index $i$. For any pairs $(x,y), (x', y')$ we say that $(x,y) \sim (x', y')$ if $x = x'$ or $y = y'$.

For any such monomial events $A, A'$, we say $A \sim A'$ iff there are pairs $z = (x,y), z' = (x', y')$ such that $A$ demands $z$ and $A'$ demands $z'$ where $z \sim z'$. For a pair $z$, we also say that $A \sim z$ if $A$ demands $z'$ for some $z' \sim z$.

We will consider here a slightly denser dependency graph than the one used by Erd\H{o}s \& Spencer \cite{erdos-spencer}: we have an edge on bad-events $B, B'$ whenever $B \sim B'$. In this case, there is an efficient algorithm, known as the \emph{Swapping Algorithm}, which plays a similar role the MT algorithm for the variable-assignment setting. 
\begin{algorithm}[H]
\centering
\caption{The Swapping Algorithm}
\begin{algorithmic}[1]
\State Generate the permutation $\pi$ uniformly at random and independently. 
\While{there is some true bad-event}
\State {Choose an arbitrary true bad-event $B \equiv \pi(x_1) = y_1 \wedge \dots \pi(x_r) = y_r$.}
\For{$i = 1, \dots, r$}
\State Swap entry $x_i$ of $\pi$ with $x'_i$ chosen uniformly at random from $[n] - \{x_1, \dots, x_{i-1} \}$.
\EndFor
\EndWhile
\end{algorithmic}
\end{algorithm}

We refer to a single iteration of the loop in lines 4 -- 5 as \emph{resampling} $B$ and we refer to line 5 as a \emph{swap}. We let $\pi_t$ denote the value of the permutation after $t$ resampling steps. We refer to $\pi_0$, the permutation selected at line 1, as the \emph{initial configuration.}

This section will be devoted to showing tighter bounds on the MT-distribution (the distribution of states at the termination of the Swapping Algorithm). Our analysis, which is based on \cite{prev-lll}, is very technically involved and will be deferred to Appendix~\ref{permutation-appendix}. We contrast our approach here with work of \cite{achlioptas, harvey}, which develops a general and clean way of calculating probabilities for the permutation LLL. At the same time, their strategy is very abstract, covering a number of probability spaces including spanning trees and perfect matchings on a clique, and one cannot easily take advantage of the extra structure available just for permutations.

\subsection{A new MT-distribution bound}
Consider a monomial event $A \equiv \pi(x_1) = y_1 \wedge \dots \wedge \pi(x_r) = y_r$. Our goal is to show an upper bound on $\pmt(A)$. As shown in \cite{prev-lll}, there is an MT-distribution which is essentially identical to the ``generic'' LLL-distribution bound:
\begin{proposition}[\cite{prev-lll}]
\label{fprev-lll-prop}
For any monomial event $A$, we have $\pmt(A) \leq \Pr_{\Omega} \Psi_{\mathcal B[A]}(A)$.
\end{proposition}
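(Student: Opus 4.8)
\textbf{Proof proposal for Proposition~\ref{fprev-lll-prop}.}

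The plan is to mirror, in the permutation/Swapping-Algorithm setting, the argument used above for Proposition~\ref{fmt-dist-prop1}: run the Swapping Algorithm until the first time an atomic event $A$ becomes true, record a witness structure that certifies this, show that this witness is ``proper'' (all its internal labels come from $\mathcal B[A]$) and that it is compatible with the randomness, and finally sum the compatibility probabilities over all such witnesses, recognizing the total as $\theta_{\mathcal B[A]}(A)$. Concretely, I would first set up the analogues of the resampling table and of $\hat\tau^{T,A}$ for the Swapping Algorithm; this requires recording not only which bad-events were resampled and in what order, but also the random swap choices $x_i'$ made at lines 5--6. The bookkeeping for these swap choices is exactly the technically involved part that the paper defers to Appendix~\ref{permutation-appendix}, so in this plan I would invoke the machinery of \cite{prev-lll} for it.

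The first main step is to argue the \emph{stopping-time} observation: run the Swapping Algorithm, and let $T$ be the first step at which $A$ holds (possibly $T=0$, in which case $A$ holds on the uniformly random initial configuration and contributes exactly $P_{\Omega}(A) \le \theta_{\mathcal B[A]}(A)$). If $T > 0$, the witness DAG $\hat\tau^{T,A}$ is assembled from the resampling history. Because $T$ is minimal, every bad-event $B$ that was resampled strictly before time $T$ was true at its resampling time, hence $A$ was \emph{not yet} forced — so $P_\Omega(A \mid B) < 1$, i.e. $B \in \mathcal B[A]$. Thus every non-sink node of $\hat\tau^{T,A}$ carries a label in $\mathcal B[A]$, so $\hat\tau^{T,A} \in \Gamma_{\mathcal B[A]}(A)$. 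This step is the direct permutation analogue of Proposition~\ref{fxx1}, and the only subtlety is making ``the witness DAG of the history'' precise in the swapping model.

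The second main step is the \emph{weight accounting}: one shows that for a fixed $\tau \in \Gamma_{\mathcal B[A]}(A)$, the probability (over the initial permutation and the swap randomness) that $\tau$ is ``compatible'' with the execution — meaning each node's label holds on the appropriate intermediate configuration — is at most $w(\tau) = \prod_{v \in \tau} P_\Omega(L(v))$, with the sink contributing $P_\Omega(A)$. Here the obstacle is that, unlike in the variable-assignment setting where distinct nodes constrain disjoint entries of the resampling table and the events are exactly independent, in the permutation setting the swaps interact and the clean product only becomes an \emph{inequality} after the careful coupling argument of \cite{prev-lll}; reproducing that is the heart of Appendix~\ref{permutation-appendix}. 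Granting it, summing over all $\tau \in \Gamma_{\mathcal B[A]}(A)$ gives
$$
\pmt(A) \le \sum_{\tau \in \Gamma_{\mathcal B[A]}(A)} w(\tau) \le P_\Omega(A) \sum_{\substack{I \subseteq N(A) \cap \mathcal B[A] \\ I \text{ independent}}} \mu_{\mathcal B[A]}(I) = \theta_{\mathcal B[A]}(A),
$$
where the middle inequality uses the correspondence (as in Proposition~\ref{fxx2}) between proper WDs with sink labeled $A$, whose in-neighbors of the sink form an independent set $I \subseteq N(A)\cap\mathcal B[A]$, and stable-set sequences rooted at $I$, together with Proposition~\ref{ftotal-weight-prop}. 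The main obstacle throughout is thus not the combinatorial skeleton — that is a faithful copy of the variable-assignment argument — but the probabilistic claim that a fixed witness is compatible with probability at most its weight, which genuinely needs the permutation-specific coupling from \cite{prev-lll} and is exactly why the detailed proof is relegated to the appendix.
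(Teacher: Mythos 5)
Your outline is correct and matches the standard proof of this result: it has the same two-step structure (a minimal-time witness whose non-root labels all lie in $\mathcal B[A]$, plus a Witness Tree Lemma bounding each witness's probability of appearing by its weight, summed via the stable-set-sequence correspondence of Proposition~\ref{ftotal-weight-prop}) that \cite{prev-lll} uses and that this paper generalizes in Appendix~\ref{permutation-appendix} to prove the stronger Theorem~\ref{fnew-lll-prop}. The paper itself quotes Proposition~\ref{fprev-lll-prop} from \cite{prev-lll} without reproving it, and you correctly identify that the only genuinely hard ingredient --- the permutation analogue of the compatibility bound, which in \cite{prev-lll} is established by witness trees, projections to witness subdags, and the swap-coupling argument rather than a static resampling table --- must be imported from that source.
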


The quantity $\Psi_{\mathcal B[A]}(A)$ here is a sum over stable subsets of $\mathcal B$. Our main result is that the sum can be restricted to sets which also satisfy an additional property known as \emph{orderability}.

\begin{definition}
We say that a set $Y \subseteq \mathcal B$ is \emph{orderable} to $A$ if there exists an ordering of $Y$ as $Y = \{B_1, \dots, B_{\ell} \}$ and a list of pairs $\{ z_1, \dots, z_{\ell} \}$,  such that $A$ demands $z_1, \dots, z_{\ell}$ such that $z_i \sim B_i, z_i \not \sim B_1, \dots, B_{i-1}$ for $i = 1, \dots, {\ell}$.
\end{definition}

We define $\text{Ord}(A)$ to be the collection of all stable, orderable sets to $A$, and we correspondingly define parameter $\Psi'$ by:
$$
\Psi'(A) = \sum_{I \in \text{Ord}(A)} \mu_{\mathcal B[A]}(I) 
$$

Our main result will be the following:
\begin{theorem}
\label{fnew-lll-prop}
For a monomial event $A$, we have $\pmt(A) \leq \Pr_{\Omega}(A) \Psi'(A)$.
\end{theorem}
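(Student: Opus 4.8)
The plan is to mimic the structure of the variable-assignment arguments in Sections~\ref{fmt-disj-sec} and~\ref{fmt-sing-sec}, but now using the coupling construction ("swapping table" / deferred-randomness analysis) of \cite{prev-lll} for the Swapping Algorithm in place of the resampling table. The key conceptual point is that Proposition~\ref{fprev-lll-prop} is proved by showing that whenever $A$ first becomes true, the witness structure $\hat\tau^{T,A}$ lies in $\Gamma_{\mathcal B[A]}(A)$ and is compatible with the underlying randomness; we want to argue that in fact one can always extract a witness structure whose non-root portion is an \emph{orderable} independent set, not merely an independent set. So the first step is to define, for each atomic event $A$ and each independent orderable set $I \in \text{Ord}(A)$, a family $T_I$ of witness DAGs analogous to the $T_A$ and $T_B$ families above, whose total weight is controlled by $\mu_{\mathcal B[A]}(I)$, so that $\sum_{I \in \text{Ord}(A)} \sum_{\tau \in T_I} w(\tau) \le \theta'(A)$.

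Second, and this is the heart of the argument, I would show that whenever the Swapping Algorithm makes $A$ true for the first time, the extracted witness DAG can be taken to have the orderability property. The natural approach is a minimality / "compression" argument in the spirit of \cite{harris-llll} (as the introduction flags): among all witness DAGs $\tau$ that are compatible with the current randomness and certify $A$, pick one that is minimal in some well-chosen partial order (e.g. fewest non-root nodes, then some tie-break), and show that minimality forces the sink-labels of $\tau$ minus its root to form an orderable set. Concretely, if the non-root sinks attached to the root $r$ were \emph{not} orderable, one should be able to peel off a redundant bad-event using a $\Phi$-type relabelling/deletion operation — exactly the operation used in Propositions~\ref{ftrans-prop} and~\ref{gtrans-prop} — producing a strictly smaller compatible witness DAG still certifying $A$, a contradiction. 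This requires establishing the permutation analogue of Proposition~\ref{ftrans-prop}: that the $\Phi$-operation preserves compatibility with the randomness, which in the permutation setting means carefully tracking how swaps affect the relevant positions $\rho(\tau,v,\cdot)$ of the permutation and checking the atomic events still hold on the modified configurations $X_{\tau,v}$.

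Third, once we know that only orderable independent sets can appear as the non-root sink structure of the extracted witness, the union bound over witness DAGs gives
$$
\pmt(A) \;\le\; \sum_{\tau \text{ orderable-type}} w(\tau) \;\le\; \sum_{I \in \text{Ord}(A)} P_\Omega(A)\, \mu_{\mathcal B[A]}(I) \;=\; \theta'(A),
$$
where the middle step uses the correspondence between witness DAGs with prescribed sink set $I$ and stable-set sequences rooted in $I$ (Proposition~\ref{ftotal-weight-prop}), together with the fact (already used to prove Proposition~\ref{fprev-lll-prop}) that a fixed witness DAG is compatible with probability exactly $w(\tau)$. The factor $P_\Omega(A)$ comes from the root node $r$ labeled $A$, which in the permutation setting has probability $P_\Omega(A) = \frac{(n-r)!}{n!}$ rather than a product over variables; one must check this cleanly separates from the constraints imposed by the other nodes, i.e. that the root's constraints on $\pi$ are independent of the rest, which is the permutation-LLL analogue of the "disjoint entries of $R$" observation.

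\textbf{Main obstacle.} The serious difficulty is entirely in the permutation bookkeeping: unlike the variable-assignment case, resampling a bad-event performs a sequence of transpositions that touch positions and values \emph{outside} the bad-event, so the clean statement "$\rho(\tau,v,x)$ is unchanged when we delete irrelevant nodes" is no longer immediate and must be re-derived within the \cite{prev-lll} framework (their notion of witness trees and the "swap graph" with its extra ordering data). In particular, making the minimality argument precise — defining the right witness structure for permutations, proving $\Phi$ preserves compatibility, and showing non-orderability yields a strictly smaller witness — is where all the technical weight lies, which is presumably why the authors defer the whole thing to Appendix~\ref{permutation-appendix}. I expect the proof to proceed by first porting all of Section~\ref{fmt-disj-sec}'s machinery to permutations (analogues of Propositions~\ref{fxx1}--\ref{fx4}) and only then assembling Theorem~\ref{fnew-lll-prop}.
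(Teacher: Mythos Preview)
Your outline is a plausible strategy, but it is \emph{not} how the paper proceeds, and the route you sketch runs into an obstacle the paper deliberately avoids.

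The paper does not take a minimal compatible witness and compress it via a $\Phi$-type operation. Instead, orderability is baked into the \emph{construction} of the witness tree (Algorithm~3): when building $\hat\tau^{T,A}$ backward in time, a bad-event $B_t$ is added as a child of the root $r$ only if the resulting set of root-children labels remains in $\text{Ord}(A)$; otherwise $B_t$ is simply dropped. So every tree produced by the procedure has orderable root-children by fiat. The technical work is then entirely in re-proving the Witness Tree Lemma (Lemma~\ref{fwitness-tree-lemma}) for this modified tree: one must show that the omitted nodes do not spoil the usual invariant that source nodes of the future-subgraph $G_t$ force $\pi^t(x)=y$. This is Proposition~\ref{ffuture-prop2a}, and its proof is exactly where orderability is used --- if a bad-event $B$ interferes with the root's condition but was not recorded in the tree, orderability of the existing root-children together with $B$ shows $B$ \emph{would} have been eligible to be added, a contradiction. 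The rest of the appendix (witness subdags, W-configurations, active conditions, Proposition~\ref{fexchange-prop2}) is the \cite{prev-lll} machinery essentially unchanged.

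Your approach, by contrast, wants a resampling-table notion of ``$\tau$ compatible with $R$'' for permutations so that the $\Phi$ operation of Section~\ref{fmt-disj-sec} can be shown to preserve compatibility. But the permutation setting has no such table: the probability bound $P(\tau \text{ appears}) \le w(\tau)$ in \cite{prev-lll} is not proved by exhibiting independent constraints on disjoint random entries, but by an inductive argument tracking how the number of active conditions changes across a single resampling (Proposition~\ref{fexchange-prop2}). There is no configuration $X_{\tau,v}$ with the clean independence structure you need for your second step, so the argument that ``$\Phi$ preserves compatibility, hence a non-orderable witness can be shrunk'' has no obvious footing. The paper sidesteps this entirely by never needing a $\Phi$ map: it restricts the tree at construction time and pays the price by redoing the source-node lemma with the orderability hypothesis available.
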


Our proof strategy, as in \cite{prev-lll}, is based on generating a succinct witness tree that ``explains'' the history behind why $A$ came to be true. Suppose that we run the Swapping Algorithm to run time $T$, resampling events $B_1, \dots, B_T$. If $A$ is true at time $T$, we construct a witness tree $\hat \tau^{T,A}$ using the procedure given as Algorithm~\ref{gen-tau-alg}:
\begin{algorithm}[H]
\centering
\caption{Procedure for generating $\hat \tau^{T,A}$}
\label{gen-tau-alg}
\begin{algorithmic}[1]
\State  Let $\hat \tau^{T,A }_{ T+1}$ be a singleton node $r$ labeled $A$.
\For{$t = T, \dots, 1$}
\If{there is a  node $v \in \hat \tau^{T,A}_{ t+1} - r$ labeled by some $B \sim B_t$}
\State Select the node of $v$ of greatest depth (breaking ties by label) whose label is $B \sim B_t$.
\State Let  $\hat \tau^{T,A}_{ t}$ be $\hat \tau^{T, A}_{ t+1}$, plus one additional node, which is a child of $v$ labeled $B_t$.
\ElsIf{the children of $r$ in  $\hat \tau_{ t+1}^{T,A}$ have labels $B'_1, \dots, B'_s$ and $\{ B'_1, \dots, B'_s, B_t \} \in \text{Ord}(A)$}
\State Let  $\hat \tau^{T,A}_{ t}$ be $\hat \tau^{T,A}_{ t+1}$, plus one additional node, which is a child of $r$ labeled $B_t$.
\EndIf
\State{\textbf{else} set $\hat \tau^{T,A}_{ t} = \hat \tau^{T,A}_{ t+1}$}
\EndFor
\State Set $\hat \tau^{T,A} = \hat \tau^{T,A}_{ 1}$
\end{algorithmic}
\end{algorithm}

We say that a rooted tree whose node has label $A$ and whose non-root nodes are labeled by events in $\mathcal B$, is a \emph{tree-structure rooted in $A$}. We say that $\tau$ \emph{appears} if $A$ is true at some time $T$ and $\hat \tau^{T,A} = \tau$.  Our result will follow from the key technical lemma:
\begin{lemma}[Witness Tree Lemma]
  \label{fwitness-tree-lemma}
  For any given tree-structure $\tau$, the probability that $\tau$ appears is at most $w(\tau)$.
\end{lemma}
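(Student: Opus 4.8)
The plan is to adapt the resampling-table / coupling argument of \cite{prev-lll}, with the essential new twist that the set of children of the root in $\tau$ is now constrained to lie in $\text{Ord}(A)$ rather than merely being independent. As in the variable-assignment case (cf.\ Propositions~\ref{fxx2}, \ref{fx4}), the core idea is that each node $v$ of $\tau$ imposes a condition on a specific, isolated piece of the randomness used by the Swapping Algorithm, so that the probability that $\tau$ appears factors as a product over the nodes of $\tau$, giving exactly $w(\tau) = \prod_{v}P_\Omega(L(v))$. First I would set up the swapping analogue of the resampling table: the deterministic record of the initial permutation together with, for each resampling step, the sequence of swap-partners $x'_1,\dots,x'_r$ drawn in lines~5--6. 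Following \cite{prev-lll}, one shows that, conditioned on $\tau$ appearing, the permutation ``seen'' by each node $v$ of $\tau$ at the moment its label is resampled is uniform over the relevant coset, and that the events ``$L(v)$ holds on the permutation seen by $v$'' are independent across $v$ because they constrain disjoint coordinates of the swapping randomness. This is the part that is essentially inherited from prior work and I would quote the machinery of \cite{prev-lll} for it rather than reprove it.

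The genuinely new work is verifying that the tree-construction procedure (Algorithm~3) actually produces a tree-structure whose root-children form an orderable-and-independent set, and—conversely—that this is the \emph{only} extra constraint, so that no probability mass is lost. Here the key steps would be: (i) show that the procedure is well-defined and that at each time $t$ the partial object $\hat\tau^{T,A}_t$ is a valid rooted tree; (ii) show that whenever we attach a node as a child of the root in line~7, the set of root-children remains orderable to $A$—this is immediate from the guard in line~6, but one must also check that a node is \emph{only} ever attached to the root when it is a legitimate ``new'' reason, i.e.\ that the ``else'' branch in line~9 is never triggered in a situation where $B_t$ genuinely affected $A$ for the first time; (iii) show that distinct execution histories yielding the same tree-structure $\tau$ correspond to disjoint events, and that the union of these events is precisely captured by the ``$\tau$ is compatible with the swapping table'' condition. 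The orderability guard is exactly what lets us argue that when a resampling $B_t$ first makes $A$ true (so $B_t \sim A$ via some pair $z\in A$ not linked to any earlier child-of-root), the element $z$ can serve as the witness index $z_i$ in the definition of orderability; inductively the root-children assembled over the run form an orderable set. I would carry out this bookkeeping by an induction on $t$ running backwards from $T+1$ to $1$, tracking the invariant ``$\hat\tau^{T,A}_t$ is a tree-structure rooted in $A$ whose root-children are orderable to $A$, and whose node set records a subset of the resamplings $B_t,\dots,B_T$ consistent with the swapping history.''

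With these structural facts in place, the probability estimate is the same computation as in \cite{prev-lll} and in Proposition~\ref{fx4}: fix a tree-structure $\tau$; the probability that $\tau$ appears is at most the probability that the swapping table is ``compatible'' with $\tau$, which factors over nodes; the initial-configuration node contributes $P_\Omega(A)$-type factors only through the root, and each internal node $v$ contributes $P_\Omega(L(v))$; multiplying gives $w(\tau)$. I expect the main obstacle to be step~(iii) above—precisely pinning down the correspondence between executions and tree-structures, and in particular checking that the ``select the node of greatest depth, breaking ties by label'' rule in line~4 makes the map from histories to trees behave well enough that the disjointness of the pre-images holds and that compatibility of the swapping table with $\tau$ is a necessary consequence of $\tau$ appearing. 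This is the point where the permutation setting genuinely differs from the variable-assignment setting: swaps are not independent coordinate-resamplings, so one must invoke the more delicate coupling lemmas of \cite{prev-lll} (the analogue of Proposition~\ref{mtprop1}) to know that the ``permutation seen by $v$'' is distributed as claimed and that the per-node constraints are jointly independent. Everything else—orderability being preserved, the backward induction, the final product bound—should go through by the same template as Section~\ref{fmt-disj-sec}, merely with ``\,$\in\text{Ord}(A)$\,'' inserted wherever the variable-assignment proof said ``independent.''
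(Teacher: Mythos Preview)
Your proposal has a genuine gap in its understanding of how the probability bound is obtained in the permutation setting. You write that ``each node $v$ of $\tau$ imposes a condition on a specific, isolated piece of the randomness'' and that the per-node events ``are independent across $v$ because they constrain disjoint coordinates of the swapping randomness,'' and you plan to quote \cite{prev-lll} for this. But no such independence statement exists in \cite{prev-lll}, and none can: a single swap of $\pi$ alters two entries simultaneously, so the ``cells'' of any swapping table are not independent in the way resampling-table cells are in the variable-assignment setting. The acknowledgment at the end of your proposal (``swaps are not independent coordinate-resamplings'') is exactly the obstruction, and it is not overcome by a coupling lemma that one can simply cite.

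The paper's proof does not attempt a resampling-table argument at all. Instead it projects $\tau$ to a \emph{witness subdag} $G=\text{Proj}(\tau)$, and tracks over time the \emph{future-subgraph} $G_t$ together with its set of \emph{active conditions} (endpoints of $W$-configurations), of size $a(G_t)$. The core probabilistic step (Proposition~\ref{fexchange-prop2}, taken from \cite{prev-lll}) says that when a bad-event $B$ is resampled, $\pi^{t+1}$ satisfies the active conditions of $G_{t+1}$ with conditional probability at most $P_\Omega(B)\,\frac{(n-a(G_{t+1}))!}{(n-a(G_t))!}$. These factors telescope over the nodes of $\tau$, and combining with the probability $\frac{(n-a(\tau))!}{n!}$ that the initial permutation satisfies the active conditions of $\text{Proj}(\tau)$ yields $w(\tau)$. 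The orderability hypothesis enters not in the probability accounting but in the structural Proposition~\ref{ffuture-prop2a}: when the relevant source node corresponds to the root, one must argue that the bad-event $B$ encountered would have been eligible for insertion as a child of the root, and this is precisely where the guard ``$\{B''_1,\dots,B''_r,B\}\in\text{Ord}(A)$'' in line~6 of the tree-construction is used. Your outline correctly identifies that orderability needs to be checked at the root, but locates it in the wrong part of the argument and entirely omits the witness-subdag / active-conditions machinery that actually carries the probability bound.
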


Let us first discuss a simple example which gives the intuition behind Lemma~\ref{fwitness-tree-lemma}. Suppose that event $A \equiv \pi(1) = 1$ becomes true during the Swapping Algorithm, and two other bad-events were first resampled, $B_1 \equiv \pi(1) = 2, B_2 \equiv \pi(2) = 1$. We want to build a witness tree for  $A$.

In the conventional method of building witness trees, the witness tree $\tau = \hat \tau^{2,A}$ would have a root node labeled $A$ with two children labeled $B_1, B_2$. Then a necessary condition for $\tau$ to appear is that the initial configuration must satisfy $\pi_0(1) = 2, \pi_0(2) = 1$. We must resample $B_1, B_2$ (in some unspecified order). After the second such resampling, we must perform the swap such that $\pi(1) = 1$. Regardless of the state of the system at the time of this second resampling, this has probability $1/n$. With a little more careful analysis, we see that the tree $\tau$ appears with probability at most $1/n^3 = w(\tau)$.

However, observe that the \emph{first} resampling among $B_1, B_2$ played essentially no role in this argument. The key events that ``cause'' $A$  only happen during the \emph{second} resampling. Thus, we should be able explain $A$ without mentioning $B_1$, giving a more ``compressed'' history for why $A$ become true. And indeed, $\hat \tau^{T,A}$ has only a \emph{single} child --- either $B_1$ or $B_2$, whichever occurred last. (Observe that $\{ B_1, B_2 \}$ is not orderable to $A$.)

Many other complications can arise for larger trees. This analysis mostly follows along the lines of \cite{prev-lll} with a few key definitions and proofs modified; we thus defer it to Appendix~\ref{permutation-appendix}. This result is quite similar to a result shown in \cite{harris-llll}; we compare the two settings in Appendix~\ref{compare-sec}. 

\subsection{Complex events and the original configuration}
When computing the probability of some complex event $A$ in the MT distribution, it is often useful to separately bound the probability that $A$ occurs in the initial configuration (which is just the uniform distribution on $S_n$), and that $A$ occurs later during the Swapping Algorithm. We say that $A$ \emph{occurs non-initially}, if when we run the Swapping Algorithm algorithm, $A$ is false on the original configuration but true in the final configuration. Given any events $A, C$, we say that $A$ \emph{occurs non-initially and before $C$ (n.i.b. $C$)}, if at some time $t > 0$ during the execution of the Swapping Algorithm such that $A$ is false on $\pi_0$, $A$ is true on $\pi_t$, and $C$ is false on $\pi_0, \dots, \pi_{t-1}$.

This is based on the following for witness trees:
\begin{proposition}
  \label{fchange-prop2}
If the tree-structure containing a single node labeled $A$ appears, then $A$ holds in the initial configuration.
\end{proposition}
\begin{proof}
  Suppose not, and so $A$ demands $(x, y)$ but $\pi_0(x) \neq y$. In order for the tree-structure to appear, we must have $\pi_{T}(x) = y$ at some later time $T > 0$. As we show later in  Proposition~\ref{fchange-prop}, some bad-event $B$ must be resampled prior to time $T$ with $B \sim (x,y)$. But then when forming the witness tree $\hat \tau^{T,A}$, a node labeled $B$ would be placed below the root node labeled $A$; in particular, the tree $\hat \tau^{T,A}$ could not be the singleton root node.
  \end{proof}

\begin{proposition}
\label{fprop2}
Let $A$ be a monomial event and $C$ be an arbitrary event. The probability that $A$ occurs n.i.b. $C$ is at most $\Pr_{\Omega}(A) (\Psi_{\mathcal B[ C ]}'(A) - 1).$
\end{proposition}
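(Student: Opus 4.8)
\textbf{Proof proposal for Proposition~\ref{fprop2}.}

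The plan is to adapt the witness-tree machinery of Theorem~\ref{fnew-lll-prop} to track the \emph{first} occurrence of $A$, but with $\mathcal B$ replaced by $\mathcal B[C]$ as the relevant event set, exactly analogous to how Propositions~\ref{fxx1}--\ref{fx4} and \ref{gxx1}--\ref{gx4} were proved in the variable-assignment setting. First I would run the Swapping Algorithm and stop at the first time $t>0$ such that $A$ is true on $\pi^t$ and $A$ was false on $\pi^0,\dots,\pi^{t-1}$, and such that $C$ was false on $\pi^0,\dots,\pi^{t-1}$ (i.e.\ the event ``$A$ occurs n.i.b.\ $C$'' has happened and we catch it the first instant it does). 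At that moment I form the witness tree $\hat\tau^{t,A}$ using Algorithm~3. The key claim is that every non-root node of $\hat\tau^{t,A}$ has a label lying in $\mathcal B[C]$: indeed any such node corresponds to a resampled bad-event $B_j$ with $j<t$, which was true on $\pi^{j}$ (before resampling); if $B_j\notin\mathcal B[C]$, i.e.\ $P_\Omega(C\mid B_j)=1$ so $B_j$ forces $C$, then $C$ would have been true at time $j<t$, contradicting that $C$ is false on $\pi^0,\dots,\pi^{t-1}$. Hence $\hat\tau^{t,A}$ is a tree-structure rooted in $A$ whose non-root labels come from $\mathcal B[C]$, and moreover (by the orderability condition built into line~6 of Algorithm~3) the set of root-children labels lies in $\text{Ord}_{\mathcal B[C]}(A)$.

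Next I would bound the total probability. Since $A$ is false on $\pi^0$ here (we are excluding the initial occurrence), the event ``$A$ occurs n.i.b.\ $C$'' implies that some tree-structure $\tau$ rooted in $A$, with at least one non-root node, with all non-root labels in $\mathcal B[C]$, and with root-children forming an orderable independent set, \emph{appears}. By the Witness Tree Lemma (Lemma~\ref{fwitness-tree-lemma}), each such $\tau$ appears with probability at most $w(\tau)$, and these events (over distinct $\tau$) can be union-bounded. So the probability that $A$ occurs n.i.b.\ $C$ is at most $\sum_{\tau} w(\tau)$, the sum ranging over tree-structures rooted in $A$ with a nonempty set of non-root nodes, non-root labels in $\mathcal B[C]$, and root-children labels in $\text{Ord}_{\mathcal B[C]}(A)$. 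Using the standard correspondence between witness trees hanging off a fixed independent set $I$ of root-children and stable-set sequences rooted in $I$ (Proposition~\ref{ftotal-weight-prop} / the discussion following Proposition~\ref{mtprop1}), this sum equals
$$
P_\Omega(A)\sum_{\substack{I\in\text{Ord}(A)\\ I\neq\emptyset}}\mu_{\mathcal B[C]}(I) = P_\Omega(A)\bigl(\Psi'_{\mathcal B[C]}(A)-1\bigr)=\theta'_{\mathcal B[C]}(A)-P_\Omega(A),
$$
where the ``$I\neq\emptyset$'' restriction (equivalently, the ``$-1$'') comes precisely from the requirement that $A$ does not hold initially, so $\tau$ must have at least one non-root node.

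The main obstacle I anticipate is that the Witness Tree Lemma as stated (Lemma~\ref{fwitness-tree-lemma}) is a statement about the unmodified Swapping Algorithm with bad-event set $\mathcal B$, whereas here I need the analogous statement with $\mathcal B[C]$ in the role of $\mathcal B$ in the \emph{tree-structure} constraints, while still running the genuine algorithm on all of $\mathcal B$. The point is that the construction in Algorithm~3 only ever adds nodes labeled by events we actually resampled, and the ``appears'' probability estimate in Lemma~\ref{fwitness-tree-lemma} is purely a statement about the randomness of the swaps conditioned on the tree's combinatorial shape --- it does not care whether the labels happen to lie in a sub-collection. So the lemma applies verbatim to any tree-structure $\tau$ that can arise, and the only work is to verify (as above) that when $A$ occurs n.i.b.\ $C$ the resulting $\hat\tau^{t,A}$ lies in the restricted family, and that the restricted family's total weight is $\theta'_{\mathcal B[C]}(A)-P_\Omega(A)$. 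Both are direct analogues of Propositions~\ref{fxx1}, \ref{fxx2} and their singleton-event counterparts, so once the bookkeeping with $\mathcal B[C]$ and the orderability condition is set up carefully, the proof should go through cleanly; I would relegate the detailed tree-enumeration argument to the permutation appendix alongside the proof of Theorem~\ref{fnew-lll-prop}.
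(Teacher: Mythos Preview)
Your proposal is correct and follows essentially the same approach as the paper's proof: form the witness tree at the first time $A$ holds, argue its non-root labels lie in $\mathcal B[C]$ (else $C$ would have been true earlier), argue the tree is non-singleton (since $A$ fails initially), apply Lemma~\ref{fwitness-tree-lemma}, and sum over the restricted family to get $\theta'_{\mathcal B[C]}(A)-P_\Omega(A)$. One cosmetic slip: with the paper's convention $\pi^t$ is the state \emph{after} $t$ resamplings, so the resampled event $B_j$ is true on $\pi^{j-1}$, not $\pi^j$; this does not affect the argument since $j-1\in\{0,\dots,t-1\}$ anyway.
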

\begin{proof}
By Corollary~\ref{fchange-prop2},  a necessary condition for the singleton tree to appear is that $A$ occurs in the initial configuration. Thus, if $A$ occurs for the first time at time $t > 0$, then some \emph{non-singleton} tree-structure $\tau$ rooted in $A$ must appear.  Furthermore, since $A$ is false at all prior times, every node in the tree has a label in $\mathcal B[C]$. 

By Lemma~\ref{fwitness-tree-lemma}, the probability that $A$ occurs n.i.b. $C$ can be bounded summing over all non-singleton tree-structures rooted in $A$, whose nodes are labeled by $\mathcal B[ C ]$. By Proposition~\ref{ac1}, the total weight of all tree-structures rooted in $A$ is at most $\Pr_{\Omega}(A) \Psi'(\mathcal B[C])(A)$, and the singleton tree-structure contributes $\Pr_{\Omega}(A)$. 
\end{proof}

As an illustration, consider a disjunctive event:
\begin{theorem}
\label{swapping-dist-thm}
For a collection $\mathcal A$ of monomial events, we have
$$
\pmt(\ora) \leq \Pr_{\Omega}(\ora) + \sum_{A \in \mathcal A} \Pr_{\Omega}(A) (\Psi'_{\mathcal B[ \ora]}(A) - 1)
$$
\end{theorem}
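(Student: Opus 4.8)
The plan is to mirror the structure of the proof of Proposition~\ref{fx4}, but replacing the resampling-table/WD machinery of the variable-assignment setting with the witness-tree machinery of the Swapping Algorithm, in particular Proposition~\ref{fprop2}. First I would set up the standard union-style decomposition: run the Swapping Algorithm until it terminates, enumerate $\mathcal A = \{A_1, \dots, A_m\}$ in an arbitrary (but now \emph{fixed}) order, and split the event $\ora$ according to whether it already holds in the initial configuration $\pi^0$, and if not, which $A_j$ is the ``witness'' for $\ora$ becoming true. The initial-configuration contribution is exactly $P_{\Omega}(\ora)$, which accounts for the first term on the right-hand side.

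Next I would handle the non-initial case. If $\ora$ becomes true non-initially, then some $A_j$ is true in the output but false on $\pi^0$, so $A_j$ occurs non-initially. Taking $j$ to be the \emph{smallest} index for which this happens, we get that $A_j$ occurs non-initially and, by minimality, before any of $A_1, \dots, A_{j-1}$ — that is, $A_j$ occurs n.i.b.\ $C$ where $C = A_1 \cup \dots \cup A_{j-1}$. Applying Proposition~\ref{fprop2} with this choice of $C$, the probability that $A_j$ occurs n.i.b.\ $(A_1 \cup \dots \cup A_{j-1})$ is at most $\theta'_{\mathcal B[A_1 \cup \dots \cup A_{j-1}]}(A_j) - P_{\Omega}(A_j)$. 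Here one needs the monotonicity observation that $\mathcal B[A_1 \cup \dots \cup A_{j-1}] \subseteq \mathcal B[\ora]$ (since $P_{\Omega}(\cdot \mid B) < 1$ for the smaller disjunction implies it for the larger), and $\theta'$ is monotone in the bad-event set in the sense that $\theta'_{\mathcal B'}(A_j) \le \theta'_{\mathcal B[\ora]}(A_j)$ when $\mathcal B' \subseteq \mathcal B[\ora]$; this lets us replace the subscript by $\mathcal B[\ora]$ and obtain the uniform bound $\theta'_{\mathcal B[\ora]}(A_j) - P_{\Omega}(A_j)$. Summing over $j = 1, \dots, m$ and adding the initial-configuration term $P_{\Omega}(\ora)$ gives the claimed inequality.

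The main obstacle I anticipate is the bookkeeping that makes the ``minimal witness'' argument clean — specifically, verifying that "$A_j$ occurs non-initially and before $C$" is genuinely the right event, i.e.\ that for the minimal $j$, the event $C = A_1 \cup \dots \cup A_{j-1}$ really is false on $\pi^0, \dots, \pi^{t-1}$ where $t$ is the first time $A_j$ holds. This requires that when $A_j$ first becomes true at time $t$, none of $A_1, \dots, A_{j-1}$ has become true at any earlier time — which follows from minimality of $j$ only if we are careful that "$A_i$ becomes true non-initially at some time" and "$A_i$ is true in the final output" are reconciled (an event could flicker on and off). The cleanest route is: let $t^\star$ be the \emph{first} time any $A_i$ with $i$ in the relevant index set holds, let $j$ be the corresponding index; then automatically $A_j$ is false on $\pi^0, \dots, \pi^{t^\star - 1}$ and so is every $A_i$, giving exactly the n.i.b.\ condition. (If $\ora$ holds on $\pi^0$ we are already in the first term.) Once this indexing is pinned down, the rest is a direct application of Proposition~\ref{fprop2} and a sum, with the only remaining subtlety being the $\theta'$-monotonicity in the bad-event set, which is immediate from $\text{Ord}$ and $\mu$ being monotone under enlarging $\mathcal B$ to a superset still satisfying the LLL criterion.
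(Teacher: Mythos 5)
Your overall decomposition (initial configuration contributes $P_{\Omega}(\ora)$; otherwise some $A$ occurs non-initially and before the rest) is the right idea and matches the paper's strategy, but the specific way you execute the non-initial case has a real error: your monotonicity claim goes the wrong way. You apply Proposition~\ref{fprop2} with $C = A_1 \cup \dots \cup A_{j-1}$ and then assert $\mathcal B[A_1 \cup \dots \cup A_{j-1}] \subseteq \mathcal B[\ora]$. Recall $\mathcal B[E]$ is the set of $B$ with $P_{\Omega}(E \mid B) < 1$. Since $A_1 \cup \dots \cup A_{j-1} \subseteq \ora$, we have $P_{\Omega}(A_1 \cup \dots \cup A_{j-1} \mid B) \leq P_{\Omega}(\ora \mid B)$, so $P_{\Omega}(\ora \mid B) < 1$ implies $P_{\Omega}(A_1 \cup \dots \cup A_{j-1} \mid B) < 1$; the inclusion is therefore $\mathcal B[\ora] \subseteq \mathcal B[A_1 \cup \dots \cup A_{j-1}]$, the reverse of what you wrote. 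Consequently Proposition~\ref{fprop2} with your choice of $C$ only yields $\theta'_{\mathcal B[A_1 \cup \dots \cup A_{j-1}]}(A_j) - P_{\Omega}(A_j)$, which is \emph{at least} (not at most) $\theta'_{\mathcal B[\ora]}(A_j) - P_{\Omega}(A_j)$, so your final step does not establish the stated bound. Intuitively, a larger disjunction $C$ excludes more bad-events from the witness tree (more $B$ satisfy $P(C\mid B)=1$), so you want $C$ as large as possible.

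The fix is exactly the simplification you gesture at in your last paragraph, and it is what the paper does: drop the ordering and the minimal index entirely. Let $t$ be the first time $\ora$ holds; if $t > 0$, the particular $A \in \mathcal A$ true at time $t$ satisfies the definition of ``occurs n.i.b.\ $\ora$'' (it is false on $\pi^0$ since $\ora$ is, and $\ora$ is false on $\pi^0, \dots, \pi^{t-1}$ by minimality of $t$). Apply Proposition~\ref{fprop2} with $C = \ora$ for each $A$ and take a plain union bound over $\mathcal A$; the subscript $\mathcal B[\ora]$ then appears directly with no monotonicity argument needed. Note also that, unlike Proposition~\ref{fx4} in the variable-assignment setting, no conditioning on ``$A_1, \dots, A_{j-1}$ false'' survives into the final bound here --- the events $A \in \mathcal A$ are simply union-bounded --- which is why the ordering device buys you nothing in this theorem.
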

\begin{proof}
  The probability that $\ora$ occurs in the initial configuration is $\Pr_{\Omega}(\ora)$. If $\ora$ occurs non-initially, then let $t > 0$ be the minimal time at which $\ora$ has occurred. So some $A \in \mathcal A$ is true at time $t$ and $\ora$ is false at times $0, \dots, t-1$, and so $A$ occurs n.i.b. before $\ora$. So
  \[
\Pr( \text{$\ora$ occurs non-initially} ) \leq \sum_{A \in \mathcal A} \Pr( \text{$A$ occurs n.i.b. $\ora$})  \leq \sum_A \Pr_{\Omega}(A) (\Psi_{\mathcal B[\ora]}'(A) - 1) \qedhere
\]
\end{proof}

\section{Latin transversals}
\label{latin-sec}
Consider an $n \times n$ matrix $A$, in which all the entries are assigned a color. A \emph{Latin transversal} for $A$ is a selection of cells one from each row and one from each column, so that no color is selected more than once. A canonical application of the permutation LLL, which was in fact the original motivation for the LLLL \cite{erdos-spencer}, is constructing Latin transversals when each color appears at most $\frac{27 n}{256}$ times in $A$. In this section, we extend this result with two applications: weighted Latin transversals and partial Latin transversals.

Throughout, we define $A_k \subseteq [n] \times [n]$ to be the set of cells $(x,y)$ with $A(x,y) = k$, and we assume that $|A_k| \leq \Delta$ for some parameter $\Delta$.

\subsection{Weighted Latin transversals}
\label{fweighted-Latin-sec}
Consider some weighting function $w: [n] \times [n] \rightarrow \mathbb R_{\geq 0}$. Let us define $W = \sum_{x,y} w(x,y)$ and for any set $Z \subseteq [n] \times [n]$, we define $w(Z) = \sum_{(x,y) \in Z} w(x,y)$. It is clear that there is a permutation (a selection of one cell from each row and column) of weight $W/n$. The following result shows that we can obtain a Latin transversal whose weight is not much larger than this.

\begin{proposition}
\label{vc1}
Suppose $\Delta \leq \frac{27}{256} n$. Then the Swapping Algorithm finds a Latin transversal $T$ with probability one, and $\bE_{\text{MT}}[ w(T) ] \leq \tfrac{5}{3} W/n$.
\end{proposition}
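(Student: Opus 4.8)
The plan is to control $\bE_{\text{MT}}[w(T)]$ row by row through a layer-cake decomposition, bounding the resulting ``upper level set'' probabilities with the disjunctive MT-distribution estimate of Theorem~\ref{swapping-dist-thm}. Concretely, write $\bE_{\text{MT}}[w(T)] = \sum_{i=1}^{n}\bE_{\text{MT}}[w(i,\pi(i))]$, fix a row $i$, and relabel its columns so that $w(i,1)\ge w(i,2)\ge\dots\ge w(i,n)$; with $w(i,n+1)=0$ the pointwise identity $w(i,\pi(i)) = \sum_{t=1}^{n}\bigl(w(i,t)-w(i,t+1)\bigr)\,\mathbf 1[\pi(i)\le t]$ gives
\[
\bE_{\text{MT}}[w(i,\pi(i))] \;=\; \sum_{t=1}^{n}\bigl(w(i,t)-w(i,t+1)\bigr)\,\pmt\bigl(\pi(i)\in\{1,\dots,t\}\bigr).
\]
Since these coefficients are nonnegative and $\sum_{t}t\bigl(w(i,t)-w(i,t+1)\bigr)=\sum_t w(i,t)$, it suffices to prove $\pmt\bigl(\pi(i)\in\{1,\dots,t\}\bigr)\le \tfrac{5t}{3n}$ for every $t$: then $\bE_{\text{MT}}[w(i,\pi(i))]\le\tfrac{5}{3n}\sum_t w(i,t)$, and summing over $i$ yields $\bE_{\text{MT}}[w(T)]\le\tfrac53 w(A)/n$. (Termination with probability one is separate and routine: the cluster-expansion criterion implies Shearer's criterion, under which the Swapping Algorithm halts almost surely, by \cite{prev-lll}.)

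To estimate $\pmt\bigl(\pi(i)\in\{1,\dots,t\}\bigr)$, view it as the disjunction of the singleton atomic events $A_c\equiv\pi(i)=c$, $c=1,\dots,t$, and apply Theorem~\ref{swapping-dist-thm}: with $\mathcal B'':=\mathcal B[\pi(i)\in\{1,\dots,t\}]$,
\[
\pmt\bigl(\pi(i)\in\{1,\dots,t\}\bigr)\;\le\;\tfrac{t}{n}\;+\;\sum_{c=1}^{t}\Bigl(\theta'_{\mathcal B''}(A_c)-\tfrac1n\Bigr).
\]
A short check shows that the only sets orderable to a singleton atomic event are the empty set and singletons, so $\Psi'_{\mathcal B''}(A_c)=1+\sum_{B}\mu_{\mathcal B''}(B)$ with $B$ ranging over the bad-events with $B\sim(i,c)$ lying in $\mathcal B''$; hence $\theta'_{\mathcal B''}(A_c)-\tfrac1n=\tfrac1n\sum_B\mu_{\mathcal B''}(B)$, and the whole problem reduces to showing $\sum_{B}\mu_{\mathcal B''}(B)\le\tfrac23$ for each such $c$.

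For that I would invoke the cluster-expansion criterion of \cite{bissacot} for Latin transversals, which under $\Delta\le\tfrac{27}{256}n$ is satisfied with a constant weight $\tilde\mu(B)=\mu_0$ (of order $n^{-2}$); since $\mathcal B''\subseteq\mathcal B$, the criterion also holds on $\mathcal B''$ (restricting the bad-events only shrinks the independence polynomials in the criterion), so $\mu_{\mathcal B''}(B)\le\mu_0$ for every $B$. The bad-events $B\sim(i,c)$ are exactly those containing a cell of row $i$ or column $c$; the bad-events containing the cell $(i,c)$ itself are excluded from $\mathcal B''$, leaving at most $2(n-1)$ cells, each in at most $\Delta-1$ bad-events, so $\sum_B\mu_{\mathcal B''}(B)\le 2(n-1)(\Delta-1)\mu_0$. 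The value of $\mu_0$ from \cite{bissacot} is tuned so that precisely under $\Delta\le\tfrac{27}{256}n$ this last quantity is at most $\tfrac23$ — the extremal constant $\tfrac{27}{256}=\max_x x/(1+x)^4$ is exactly what makes $\mu_0$ times a ``single line'' of the neighbourhood equal $\tfrac13$ — so $\Psi'_{\mathcal B''}(A_c)\le 1+\tfrac23=\tfrac53$, giving $\pmt(\pi(i)\in\{1,\dots,t\})\le\tfrac{5t}{3n}$ as needed. (For contrast, the LLL-distribution bound $P_\Omega(\pi(i)=j\mid\overline{\mathcal B})\le\theta_{\mathcal B[A]}(A)=\tfrac1n\Psi_{\mathcal B[A]}(A)$ lets $\Psi$ range over independent sets of \emph{all} sizes, producing $(1+\tfrac13)^2=\tfrac{16}{9}$ in place of $1+\tfrac23=\tfrac53$; the improvement is exactly the $x^2$ term of $(1+x)^2$ that the orderability restriction removes.)

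The main obstacle is this last step: extracting the exact constant $\mu_0$ from the Latin-transversal cluster-expansion criterion and verifying $2(n-1)(\Delta-1)\mu_0\le\tfrac23$ under $\Delta\le\tfrac{27}{256}n$, which needs careful bookkeeping of the ``$n$ versus $n-1$'' and ``$\Delta$ versus $\Delta-1$'' slack (and a check that the cluster-expansion weighting stays valid on each sub-instance $\mathcal B[\pi(i)\in\{1,\dots,t\}]$). The layer-cake identity, the reduction to singleton events, and the orderable-set bookkeeping are all routine by comparison.
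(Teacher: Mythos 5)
Your proof is correct, but it takes a substantially longer route than the paper's, and the extra machinery buys you nothing. Since $w(T)=\sum_{x,y} w(x,y)\,\mathbf 1[\pi(x)=y]$, linearity of expectation immediately gives $\bE_{\text{MT}}[w(T)]=\sum_{x,y}w(x,y)\,\pmt(\pi(x)=y)$, so the whole problem reduces at once to the single-cell bound $\pmt(\pi(x)=y)\le\tfrac{5}{3n}$. This is exactly what the paper does: it applies Theorem~\ref{fnew-lll-prop} directly to the atomic event $\pi(x)=y$, observes (as you also correctly observe) that orderable sets for a singleton atomic event have size at most one, counts the row-neighbors and column-neighbors, and plugs in $\alpha=\frac{256}{81n^2}$, $\Delta=\frac{27}{256}n$ to get $\Psi'\le 1+2n\Delta\alpha=\tfrac53$.

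Your layer-cake decomposition within each row and the appeal to Theorem~\ref{swapping-dist-thm} for the disjunctions $\pi(i)\in\{1,\dots,t\}$ are both dispensable: the union bound $\pmt(\pi(i)\in\{1,\dots,t\})\le\sum_{c\le t}\pmt(\pi(i)=c)$ already yields $\tfrac{5t}{3n}$ once you have the per-cell bound, and the sharper $\mathcal B[\ora]$-restriction in Theorem~\ref{swapping-dist-thm} evaporates when you upper-bound the cell count by $2(n-1)$ anyway. The ``main obstacle'' you flag at the end — extracting $\mu_0=\alpha=\frac{256}{81n^2}$ and checking $2n\Delta\alpha\le\tfrac23$ — is precisely the computation the paper spells out, and it works; the $n$-vs-$(n-1)$ and $\Delta$-vs-$(\Delta-1)$ slack you worry about only helps. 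So: correct, same core orderability computation, but the layer-cake scaffolding should be cut.
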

\begin{proof}
For each pair of entries $(i,j), (i', j')$ with $A(i,j) = A(i', j')$, we have a bad-event that $\pi(i) = j \wedge \pi(i') = j'$.  The cluster-expansion criterion is satisfied by setting $\tilde \mu(B) = \alpha = \frac{256}{81 n^2}$ for all $B \in \mathcal B$.

Now, consider any event $E \equiv \pi(x) = y$. An orderable set of neighbors for $E$ is either (i) the empty set, or (ii) a singleton set containing a bad-event of the form $\pi(x) = y' \wedge \pi(x'') = \pi(y'')$, or (iii) a singleton set containing a bad-event of the form $\pi(x') = y \wedge \pi(x'') = \pi(y'')$. These three cases contribute respectively $1$, $n \Delta \alpha$, and $n \Delta \alpha$ to the summation $\Psi'(E) = \sum_{I \in \text{Ord}(E)} \mu(I)$. So, by Theorem~\ref{fnew-lll-prop}, we have $\pmt(E) \leq \Pr_{\Omega}(E) (1 + 2 n \Delta \alpha) = 1/n \times (1 + 2 n \times \frac{27}{256} n \times \frac{256}{81 n^2}) = \frac{5}{3 n}$.

Summing over all pairs $(x,y)$ gives
\[
\emt[w(T)] = \sum_{x,y} w(x,y) \pmt(\pi(x) = y) \leq \frac{5}{3 n} \sum_{x,y} w(x,y) = \frac{5}{3} W/n \qedhere
\]
\end{proof}

Note that that the LLL-distribution or the MT-distribution bounds of \cite{prev-lll} show only the weaker bound $\bE[w(T)] \leq \frac{16}{9} W/n$.

\subsection{Partial Latin transversals}
\label{fpartial-Latin-sec}
When $\Delta = \beta n$ for $\beta > 27/256$, then the LLL construction of Proposition~\ref{vc1} does not work, and we do not know how to show the existence of a full Latin transversal. In \cite{mt-dist}, Harris \& Srinivasan considered algorithms to obtain a \emph{partial} Latin transversal, that is, is a selection of cells in the matrix, at most one cell per row and one cell per column, with the property that no color appears more than once. Two algorithms were analyzed. These both start by selecting a permutation $\pi \in S_n$, and end by modifying this permutation into a partial Latin transversal by deleting repeated colors.  The simplest algorithm just draws the permutation $\pi$ uniformly at random, with no resampling.  The second algorithm runs the Swapping Algorithm, but only resamples a randomly chosen subset of the cells. These algorithms achieve partial Latin transversals of expected size respectively $n \times \frac{1 - e^{-\beta}}{\beta},  n \times ( \frac{1}{2} + \sqrt[3]{\frac{27}{2048 \beta }} )$. 

We will discuss a more advanced scheme to construct partial Latin transversals. We first recall a result of \cite{stein}, which gives a lower bound on the probability that a random permutation meets a given set of entries in an array.

\begin{proposition}[\cite{stein}]
\label{fstein-prop2}
Suppose that $Z \subseteq [n] \times [n]$, and permutation $\pi$ is chosen uniformly at random. Then
$$
\Pr ( \bigwedge_{(x,y) \in Z} \pi(x) \neq y) \leq (1 - 1/n)^{|Z|} \leq e^{-|Z|/n}
$$
\end{proposition}

We will construct the partial Latin transversal in three stages. We begin by drawing a random subset $M \subseteq [m] \times [m]$, wherein each $(x,y)$ goes into $M$ independently with probability $r$ for some constant $r \in [0,1]$ to be specified. We also define $\overline M = [n] \times [n] - M$. We then run the Swapping Algorithm for a carefully chosen bad-event set $\mathcal B$, which depends on set $M$. We, when the Swapping Algorithm terminates, we are left with a permutation $\pi_{\text{final}}$. Let  $C_k$ denote the number of cells of color $k$ with $\pi_{\text{final}}(x) = y$ and let $L_k = \max(C_k - 1,0)$. By discarding repeated colors, we finally a Latin transversal of size $n - \sum_k L_k$.

First, note that simple concentration bounds give the following:
\begin{proposition}
  \label{conc1}
  Let $r, \beta \in [0,1]$ be arbitrary constants, and suppose that $|A_k| \leq \Delta$ for $\Delta = \beta n$.  Then, with probability $1 - o(1)$,  all of the following bounds hold for all indices $i,j,k$:
  \begin{enumerate}
  \item There are at most $r(1+o(1))$ values $y$ with $(i,y) \in M$.
  \item There are at most $r(1+o(1))$ values $x$ with $(x,j) \in M$.
  \item We have $|\overline M \cap A_k| \leq (1-r) \Delta (1+o(1))$ and $|M \cap A_k| \leq r \Delta (1+o(1))$
  \item We have $|\overline M| \leq (1-r) n^2 (1+o(1))$.
  \end{enumerate}
\end{proposition}

Let us assume for the remainder of this proof that $M$ satisfies all these conditions.  We will separately analyze the initial configuration $\pi_0$, and the final configuration $\pi_{\text{final}}$. The following formula is key to estimating the expected value of $L_k$:
\begin{proposition}  
  \label{term-bound}
  Suppose that $|\overline M \cap A_k| = u_k$, and define $Q_k$ to be the total number of pairs $(x_1, y_1) \in A_k, (x_2, y_2) \in A_k$ satisfying the following conditions:
  
  \begin{enumerate}
  \item[(B1)] $(x_1, y_1) < (x_2, y_2)$ (in the lexicographic order)
  \item[(B2)] It does NOT hold that $\pi_0(x_1) = y_1 \wedge \pi_0(x_2) = y_2 \wedge (x_1, y_1) \in \overline M \wedge (x_2, y_2) \in \overline M$
  \item[(B3)] $\pi_{\text{final}}(x_1) = y_1$ and $\pi_{\text{final}}(x_2) = y_2$
  \end{enumerate}

  Then we have $\bE[L_k] \leq  u_k/n  - 1 + e^{-u_k/n} + \bE[Q_k]$.
\end{proposition}
\begin{proof}
 Let us suppose there are $C$ cells $(x,y) \in A_k$ with $\pi_{\text{final}}(x) = y$, and, of these, $R$ cells $(x,y)$ satisfy the additional property that $(x,y) \in \overline M$ and $\pi_0(x) = y$. By definition $L_k = \max(C-1, 0)$. One can easily verify the inequality for integers $C \geq R \geq 0$:
  \begin{equation}
    \label{hgh1}
  \max(C - 1,0) \leq \max(R - 1,0) + \tbinom{C - R}{2} + R (C - R)
  \end{equation}
  
There are precisely $\binom{C}{2}$ pairs satisfying (B1), (B3), and precisely $\binom{R}{2}$ pairs which also satisfy the four properties $\pi_0(x_1) = y_1, \pi_0(x_2) = y_2, (x_1, y_1) \in \overline M, (x_2, y_2) \in \overline M$. Hence $Q_k = \binom{C}{2} - \binom{R}{2} = \tbinom{C - R}{2} + R (C - R)$.

 Next, define $T$ to be the total number of cells satisfying $(x,y) \in \overline M \cap A_k$ and $\pi_0(x) = y$. We have $R \leq T$, and so $\bE[ \max (R - 1, 0) ] \leq \bE[ \max(T - 1, 0) ] = \bE[T] - 1 + \Pr(T = 0)$. Since $\pi_0$ is a uniformly random permutation and $|\overline M \cap A_k| = u_k$, we have $\bE[T] = u_k/n$ and Proposition~\ref{fstein-prop2} gives $\Pr(T = 0) \leq e^{-u_k/n}$. So $\bE[ \max(R-1,0)] \leq u_k/n  - 1 + e^{-u_k/n}$.

  Thus, taking the expectation of (\ref{hgh1}), we have
  \[
  \bE[ L_k ] \leq \bE[ \max(R-1, 0)] + \bE[ \tbinom{C - R}{2} + R (C - R) ] \leq u_k/n  - 1 + e^{-u_k/n} + \bE[Q_k] \qedhere
  \]
\end{proof} 

\begin{theorem}
\label{fpartial-Latin-thm}
Let $\beta \in [27/256, 1]$. Suppose that each color appears at most $\Delta \leq \beta n$ times in the matrix. Let $q_{\text{max}} = 1-\sqrt{1 - (27/256)/\beta}$ and let $q \in [0, q_{\text{max}}]$.

There is an polynomial-time algorithm which produces an partial Latin transversal whose expected number of cells is at least $f(\beta, q) n - o(n)$, where $f(\beta, q)$ is given by the following formula:
$$
f(\beta, q) = q - \frac{ e^{-(1-q) \beta} - 1 }{\beta} - 2 \beta^2 \gamma^{5/4} (2 q- q^2) (1 - q)^2
$$
and where $\gamma$ is the smallest positive root of $\gamma - (1 + \beta (2 q - q^2) \gamma)^4  = 0$.
\end{theorem}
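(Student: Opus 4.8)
The plan is to set up the Swapping Algorithm with the carefully chosen bad-events and marking rule, then bound $\mathbf{E}[\sum_k L_k]$ using the decomposition in Proposition~\ref{term-bound}, controlling the ``initial'' term via Proposition~\ref{fstein-prop2} and the ``final'' term via the new MT-distribution bound Theorem~\ref{fnew-lll-prop} (in the form of Theorem~\ref{swapping-dist-thm}). First I would fix the parameter $q$ and design the algorithm: for each pair $(x_1,y_1) < (x_2,y_2)$ of cells sharing a color, include the corresponding bad-event $B = \{(x_1,y_1),(x_2,y_2)\}$ into $\mathcal B$ \emph{only with probability} governed by $q$ (this is the ``randomly chosen subset'' idea borrowed from \cite{mt-dist}), and set the marking vector $b^0(x,y)$ so that, in expectation, each cell lying on $\pi^0$ is unmarked with probability $1-q$; thus $\mathbf{E}|Q_k|$ is roughly $(1-q)u_k/n \le (1-q)\beta$ and, more importantly, the set of color-$k$ cells whose bad-events are ``active'' plays the role of the set $Z \subseteq Y$ in Proposition~\ref{fstein-prop2}. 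One must check that with this sparsified $\mathcal B$ the cluster-expansion criterion still holds with $\tilde\mu(B) = \gamma$, where $\gamma$ is the smallest positive root of $\gamma - (2q - q^2)(1+\beta\gamma)^4 = 0$; the exponent $4$ reflects that a bad-event $\{(x_1,y_1),(x_2,y_2)\}$ has neighbors through four ``slots'' $x_1,y_1,x_2,y_2$, each contributing at most $\Delta \le \beta n$ choices weighted by $\gamma$, and the factor $(2q-q^2) = 1-(1-q)^2$ is the probability that at least one endpoint is active. The constraint $q \le q_{\max} = 1 - \sqrt{1-(27/256)/\beta}$ is exactly what makes this root real and the criterion feasible.

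Next I would take expectations in Proposition~\ref{term-bound}, conditioning first on the initial configuration. The term $|Q_k| - 1 + [Q_k = \emptyset]$ contributes, summed over $k$ and in expectation over $\pi^0,b^0$, approximately $\sum_k (\mathbf E|Q_k| - 1 + P(Q_k = \emptyset))$; since $\sum_k \mathbf E|Q_k| = (1-q) n$ (each of the $n$ cells of $\pi^0$ is unmarked with probability $1-q$) and, by Proposition~\ref{fstein-prop2} applied with the active color-$k$ entries as $Z$, $P(Q_k = \emptyset) \le \exp(-(1-q)u_k/n)$, this sums (after replacing $u_k$ by its worst case and using convexity, i.e.\ that $x \mapsto e^{-(1-q)x}$ is convex so the bound is maximized when each color attains $\Delta = \beta n$) to at most $(1-q)n - (\text{number of colors}) + \sum_k e^{-(1-q)\beta}$. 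Writing the number of colors as $n/\beta \cdot(\text{something})$ and collecting terms yields the expression $n\bigl(q + \frac{e^{-(1-q)\beta}-1}{\beta}\bigr)$ as the deficit from these two terms — this is where the first two summands of $f(\beta,q)$ come from. The bookkeeping here is routine but fiddly; the clean way is to note $\sum_k(|Q_k|-1+[Q_k=\emptyset])$ counts $n$ minus (number of colors hit by $\pi^0$ among active cells), and bound the expected number of un-hit colors by $\sum_k e^{-(1-q)\beta}$.

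For the remaining ``final-configuration'' term, I would sum over all pairs $(x_1,y_1)<(x_2,y_2)$ of same-color cells, at least one of which is not in $Q_k$, the quantity $\pmt(\pi^{\text{final}}(x_1)=y_1 \wedge \pi^{\text{final}}(x_2)=y_2)$. The key point is that conditioning on ``at least one endpoint is not in $Q_k$'' forces the event to be either a genuine bad-event (occurring non-initially) or to involve a marked cell, so Theorem~\ref{swapping-dist-thm} / Proposition~\ref{fprop2} applies: each such pair-event $E$ satisfies $\pmt(E) \le \theta'_{\mathcal B[\ldots]}(E)$, and an orderable independent neighborhood of the two-literal atomic event $\pi(x_1)=y_1\wedge\pi(x_2)=y_2$ contributes at most $(1+2n\Delta\gamma)^2$-type factors — more precisely, matching the structure of Proposition~\ref{fprop2}, one gets a bound of order $P_\Omega(E)\cdot 2\beta\gamma(1+\beta\gamma)$ per pair after accounting for the $(1-q)^2$ probability that the configuration survived resampling. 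Since $P_\Omega(\pi(x_1)=y_1\wedge\pi(x_2)=y_2) = \frac{1}{n(n-1)}$ and there are at most $\binom{\Delta}{2}$ such pairs per color and $\le n/\beta\cdot\beta = n$... — summing gives the $2(1-q)^2\beta^2\gamma(1+\beta\gamma)n$ term. Putting the three contributions together, $\mathbf E[\sum_k L_k] \le n - f(\beta,q)n + o(n)$, so the partial Latin transversal has expected size $\ge f(\beta,q)n - o(n)$.

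The main obstacle I anticipate is precisely the computation of $\theta'$ for the two-literal events in the last step: one must verify that, for the sparsified $\mathcal B$ and for an atomic event $E$ on two cells sharing no row or column, the \emph{orderable} independent neighborhoods — as opposed to all independent neighborhoods — are exactly those whose weight sums to something like $(2\beta\gamma + \text{cross terms})$, and that the $o(n)$ error terms (from colors appearing fewer than $\Delta$ times, from the difference between $\Delta$ and $\beta n$, and from $\binom{n}{2}$ vs $n^2$) genuinely vanish. Getting the constant $2$ and the exponent structure of $\gamma - (2q-q^2)(1+\beta\gamma)^4 = 0$ to line up requires carefully tracking which of the four slots of a bad-event can be ``reused'' in an orderable chain; this is the technical heart and is exactly where Theorem~\ref{fnew-lll-prop} buys the improvement over the plain LLL-distribution bound of \cite{mt-dist}.
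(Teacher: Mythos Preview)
Your overall architecture is right --- use Proposition~\ref{term-bound} to split into the $|Q_k|-1+[Q_k=\emptyset]$ piece (handled by Proposition~\ref{fstein-prop2}) and the pair sum (handled by the non-initially bound of Proposition~\ref{fprop2}) --- and you correctly identify that orderability is what produces the quadratic factor $(1+\beta\gamma)^2$ rather than a quartic one. But there is a genuine gap in how you set up the algorithm, and a second gap in the ``non-initially'' step.

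\textbf{The marking must be a resampled variable, not a static one.} You describe two separate mechanisms: randomly including bad-events \`a la \cite{mt-dist}, and an initial marking $b^0$. The paper instead uses a single mechanism: each cell carries a Bernoulli-$r$ variable $b(x,y)$ that lives in the probability space and is \emph{resampled during the Swapping Algorithm}. The bad-event for a same-color pair is
\[
\pi(x_1)=y_1 \wedge \pi(x_2)=y_2 \wedge \bigl(b(x_1,y_1)=1 \vee b(x_2,y_2)=1\bigr),
\]
so every such pair is a bad-event, each with deterministic probability $(2r-r^2)/\bigl(n(n-1)\bigr)$. This is what makes the cluster-expansion inequality $\alpha \ge \frac{2r-r^2}{n(n-1)}(1+n(\Delta-1)\alpha)^4$ hold uniformly, yielding $\alpha=\gamma/n^2$. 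With your static selection the bad-event set is random; conditional on it each event has probability $1/\bigl(n(n-1)\bigr)$ and a random number of neighbors, and you cannot invoke cluster-expansion with the reduced probability $(2q-q^2)$.

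\textbf{The ``non-initially'' argument hinges on an auxiliary event $E'$ that you do not isolate.} Fix a same-color pair and let $E$ be the summand event ``$\pi^{\text{final}}(x_1)=y_1$, $\pi^{\text{final}}(x_2)=y_2$, and at least one is not in $Q_k$.'' Define
\[
E' \equiv \pi(x_1)=y_1 \wedge \pi(x_2)=y_2 \wedge b(x_1,y_1)=0 \wedge b(x_2,y_2)=0.
\]
Since the algorithm terminates with no bad-event true, having both cells on $\pi^{\text{final}}$ forces $b^{\text{final}}(x_1,y_1)=b^{\text{final}}(x_2,y_2)=0$; hence $E$ implies $E'$ holds at termination. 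But if $E'$ held on the \emph{initial} configuration then both cells would lie in $Q_k$, contradicting the summand condition. Therefore $E$ forces $E'$ to occur \emph{non-initially}, and Proposition~\ref{fprop2} applied to the atomic event $E'$ gives
\[
P(E) \le P_\Omega(E')\bigl(\Psi'(E')-1\bigr) = \frac{(1-r)^2}{n(n-1)}\Bigl((1+(2n-1)(\Delta-1)\alpha)^2-1\Bigr),
\]
the orderability bound $\Psi'(E')\le (1+(2n-1)(\Delta-1)\alpha)^2$ coming from the fact that an orderable neighbor-set of the two-pair event $E'$ contains at most one bad-event per pair. Your sentence ``forces the event to be either a genuine bad-event (occurring non-initially) or to involve a marked cell'' does not capture this chain, and without making $b$ a resampled variable you cannot even speak of $b^{\text{final}}$ versus $b^0$.

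Once these two points are fixed, the rest of your bookkeeping (summing $\binom{u_k}{2}$ over colors, replacing $u_k$ by $\Delta$ via convexity, and collecting the $o(n)$ errors from $r\to q$ and $\Delta\to\beta n$) goes through as you outline.
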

\begin{proof}
  Since the function $f$ is continuous and the parameters $\beta, q$ come from compact spaces, it suffices to show this for constant $\beta > 27/256$.

  We will run the Swapping Algorithm, where the bad-event set $\mathcal B$ is defined by having a separate bad-event $\pi(x_1) = y_1 \wedge \pi(x_2) = y_2$ for each unordered pair $(x_1,y_1), (x_2,y_2)$ satisfying $A(x_1,y_1) = A(x_2,y_2)$ and either $(x_1, y_1) \in M$ or $(x_2, y_2) \in M$.

  Each such event has probability $\frac{1}{n (n-1)}$. We will apply the cluster-expansion with $\tilde \mu(B) = \alpha = \gamma/n^2$ for all $B$. To calculate the cluster-expansion criterion here, let us count the number of bad-events $B'$ which demand $(x_1, y_1')$. If $(x_1, y_1') \in M$, then we may choose $(x_2', y_2')$ to be any other pair with $A(x_2', y_2') = A(x_1, y_1')$; if $(x_1, y_1') \in \overline M$, then we may choose any pair $(x_2', y_2') \in M$ with $A(x_2', y_2') = A(x_1, y_1')$. In light of Proposition~\ref{conc1}, there are $(1+t) ( r n \Delta + (1-r) n (\Delta r) ) = \beta n^2 (2 r - r^2) (1+t)$ choices $B'$, for some function $t = o(1)$.
  
  All such bad-events are neighbors in the dependency graph. There are similar choices for $B'$ to demand $(x_1', y_1), (x_2, y_2'), (x_2', y_2)$. Overall, the cluster-expansion criterion is satisfied if
  \begin{equation}
\label{frr0}
\alpha \geq \frac{1}{n(n-1)} (1 + \beta n^2 (2 r - r^2) (1+t) \alpha)^4 = (1+t') \frac{(1 + \beta n^2 (2 r - r^2)) \alpha)^4}{n^2}
  \end{equation}
  where $t' = o(1)$ as well.

  Since $\alpha = \gamma/n^2$, this holds if $\gamma \geq (1+t') (1 + \beta (2 r - r^2)) \gamma)^4$. Now observe that if $r$ is a constant smaller than $q$, then $(1 + \beta (2 r - r^2)) \gamma)^4$ is a constant strictly smaller than $(1 + \beta (2 q - q^2)) \gamma)^4$, and so the bound (\ref{frr0}) bound holds for $n$ sufficiently large. In particular, the Swapping Algorithm terminates with the choice of parameter $r = q - o(1)$.

  We next bound the expected value of $Q_k$ in Proposition~\ref{term-bound} for some color $k$. Let us consider some pair $z_1 = (x_1, y_1) \in A_k, z_2 = (x_2, y_2) \in A_k$ satisfying (B1), and let us define the event $E_{z_1,z_2}$ that $\pi(x_1) = y_1 \wedge \pi(x_2) = y_2$. Note that if $(x_1, y_1) \in M$ or $(x_2, y_2) \in M$, then $E$ would be chosen as a bad-event in $\mathcal B$, and so $E_{z_1,z_2}$ could not hold in the final configuration.

  Thus, a necessary condition for $(x_1, y_1), (x_2, y_2)$ to satisfy (B3) is that $(x_1, y_1) \in \overline M$ and $(x_2, y_2) \in \overline M$. But in this case, in order to satisfy (B2), we must have $\pi_0(x_1) \neq y_1$ or $\pi_0(x_2) \neq y_2$, i.e. $E_{z_1,z_2}$ does not hold in the initial configuration $\pi_0$. Overall, we see that
  $$
  \bE[Q_k] \leq \sum_{\substack{ (x_1, y_1) <  (x_2, y_2) \\ (x_1, y_1) \in \overline M \cap A_k \\ (x_2, y_2) \in \overline M \cap A_k}} \Pr( \text{$E_{z_1,z_2}$ holds non-initially})
  $$
  
If we let $u_k = |\overline M \cap A_k|$, then there are precisely $\binom{u_k}{2}$ summands here. Now consider some fixed pair $z_1 = (x_1, y_1), z_2 = (x_2, y_2)$ and we compute the probability that $E = E_{z_1,z_2}$ holds non-initially using Proposition~\ref{fprop2}. We have $\Pr_{\Omega}(E) =\frac{1}{n(n-1)}$. An orderable set of neighbors to $E$ may contain up to one bad-event $B$ which overlaps on $(x_1, y_1)$  and up to one bad-event $B$ which overlaps on $(x_2, y_2)$. By Proposition~\ref{conc1}, there are at most $n \Delta (2 r - r^2) (1+o(1))$ bad-events which demand $(x_1, y_1')$, and similarly for other three cases, and so
\begin{align*}
  \Psi'(E) &\leq (1 + 2 n \Delta (2 r - r^2) \alpha + o(1))^2 \leq (1 + 2 \beta (2 q - q^2) \gamma)^2 (1+o(1)).
\end{align*}

Thus Proposition~\ref{fprop2} gives
$$
\Pr(\text{$E$ occurs non-initially}) \leq (1+o(1)) \frac{ (1 + 2 \beta (2 q - q^2) \gamma)^2 - 1}{n^2}
$$

Since this probability holds for any of the $\binom{u_k}{2}$ choices for $z_1, z_2$, Proposition~\ref{term-bound} gives
$$
  \bE[L_k] \leq (1+o(1)) \Bigl( u_k - 1 + e^{-u_k/n} + \binom{u_k}{2}  \frac{ (1 + 2 \beta (2 q - q^2) \gamma)^2 - 1}{n^2} \Bigr)
$$

This is an increasing concave-up function of $u_k$, and we know that $u_k \leq (1+o(1)) (1-r) \Delta \leq (1+o(1)) (1-q) \beta n$. Thus, we have
\begin{align*}
  \bE[L_k] &\leq (1+o(1)) \frac{u_k}{(1-q) \beta n} \Bigl( (1-q) \beta n - 1 + e^{-(1-q) \beta} + \frac{ (1-q)^2 \beta^2 ((1 + 2 \beta (2 q - q^2) \gamma)^2 - 1)}{2} \Bigr)
\end{align*}

Summing over $k$ and noting $\sum_k u_k = | \overline M | \leq (1+o(1)) (1 - q) n^2$, we have
$$
  \bE[ \sum_k L_k ] \leq (1+o(1)) n \Bigl( (1-q) + \frac{e^{-(1-q) \beta} - 1}{\beta} + \frac{ (1-q)^2 \beta^2 ( (1 + 2 \beta (2 q - q^2) \gamma)^2 - 1) }{2 \beta} \Bigr)
  $$
  
Thus, the expected number of remaining cells in the final partial Latin transversal is given by
\[
n - \bE[\sum_k L_k] \geq n (1 - o(1)) \Bigl( q - \frac{e^{-(1-q) \beta} - 1}{\beta} - \frac{ (1-q)^2 \beta^2 ( (1 + 2 \beta (2 q - q^2) \gamma)^2 - 1)}{2 \beta} \Bigr)
\]

Simple algebraic manipulations, and using the identity $(1 + \beta (2 q - q^2) \gamma) = \gamma^{1/4}$, gives the stated formula.
\end{proof}

For any given $\beta > 0$, one may select $q \in [0, q_{\text{max}}]$ to maximize the resulting $f(\beta, q)$. We let $g(\beta)$ denote this quantity $g(\beta) = \max_{q \in [0, q_{\text{max}}]} f(\beta, q)$. Our algorithm thus can obtain a partial Latin transversal with $g(\beta) n - o(n)$ cells. For any fixed value of $\beta$ one can numerically optimize $f(\beta, q)$ and compute (a lower bound on) $g(\beta)$. We compare this result with the two algorithms of \cite{mt-dist} for selected values of $\beta$; there is a tiny but definite improvement, by up to $0.4 \%$.

\begin{center}
\begin{tabular}{|c||c|c|c|} \hline  
$\beta$ & Theorem~\ref{fpartial-Latin-thm} & $\frac{1 - e^{-\beta}}{\beta}$ & $\frac{1}{2} + \sqrt[3]{\frac{27}{2048 \beta}}$ \\
\hline
\hline
0.11 & 0.9938 & 0.9470 & 0.9930 \\
0.12 & 0.9810 & 0.9423 & 0.9789 \\
0.13 & 0.9692 & 0.9377 & 0.9663 \\
0.14 & 0.9584 & 0.9331 & 0.9550 \\
0.15 & 0.9483 & 0.9286 & 0.9446  \\
0.16 & 0.9390 & 0.9241 & 0.9351  \\
0.17 & 0.9304 & 0.9196 & 0.9264  \\
0.18 & 0.9226 & 0.9152 & 0.9184  \\
0.19 & 0.9154 & 0.9107 & 0.9109 \\
0.20 & 0.9088 & 0.9063 & 0.9040  \\
0.21 & 0.9023 & 0.9020 & 0.8974 \\
0.22 & 0.8977 & 0.8976 & 0.8913  \\
0.23 & 0.8933 & 0.8933 & 0.8856 \\
0.24 & 0.8890 & 0.8890 & 0.8801  \\
0.25 & 0.8848 & 0.8848 & 0.8750  \\
\hline
\hline
\end{tabular}
\end{center}

For $\beta \geq 1/4$, the function $f(\beta, q)$ is maximized at $q = 0$, so $g(\beta) = f(\beta, 0) = \frac{1 - e^{-\beta}}{\beta}$. In these cases we are not using the Swapping Algorithm at all. However for $\beta < 1/4$ we can see that $f(\beta, q)$ is strictly larger than either of the other two estimates (for $\beta \geq 0.22$ the difference is below the third decimal point).

\section{Acknowledgments}
Thanks to Aravind Srinivasan for helpful discussions and feedback. Thanks to Landon Rabern for pointing me to some of the literature on independent transversals. Thanks to anonymous journal reviewers for helpful suggestions and comments.

\appendix

\section{Proof of Theorem~\ref{lll-dist-theta-thm}}
\label{mt-dist-result}
\begin{lemma}
\label{mu-lem}
For any $S \subseteq \mathcal B$, we have $\Pr_{\Omega}(\overline S \mid \overline {\mathcal B - S}) \geq \frac{1}{\sum_{J \subseteq S}  \mu_{\mathcal B}(J)}$
\end{lemma}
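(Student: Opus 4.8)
The plan is to rewrite the conditional probability as a ratio, reduce the claim to a monotonicity statement about a single scalar attached to each sub-collection of $\mathcal B$, and prove that monotonicity by induction using only the defining lopsidependency inequality.

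\textbf{Step 1 (reformulation).} Since $\overline{S}\cap\overline{\mathcal B-S}=\overline{\mathcal B}$, I would first write $P_{\Omega}(\overline S\mid\overline{\mathcal B-S})=P_{\Omega}(\overline{\mathcal B})/P_{\Omega}(\overline{\mathcal B-S})$, all quantities being positive because Shearer's criterion gives $P_{\Omega}(\overline{\mathcal B})>0$. Next I would record the identity $\sum_{J\subseteq S}\mu_{\mathcal B}(J)=Q_{\mathcal B-S}(\emptyset)/Q_{\mathcal B}(\emptyset)$, where $Q_{\mathcal C}$ denotes the Shearer polynomial formed with ground set $\mathcal C$ and its induced dependency graph. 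This is a short M\"obius-type computation: expanding each $Q_{\mathcal B}(J)$ and swapping the order of summation, the alternating sum over $I\subseteq S\cap J$ collapses to $0$ unless $S\cap J=\emptyset$, and the surviving terms are exactly those defining $Q_{\mathcal B-S}(\emptyset)$ (using that independent sets of the induced graph on $\mathcal B-S$ are the independent subsets of $\mathcal B$ lying inside $\mathcal B-S$). Hence it suffices to show
$$
\frac{P_{\Omega}(\overline{\mathcal B})}{P_{\Omega}(\overline{\mathcal B-S})}\ \ge\ \frac{Q_{\mathcal B}(\emptyset)}{Q_{\mathcal B-S}(\emptyset)},
$$
i.e. that $f(\mathcal C):=P_{\Omega}(\overline{\mathcal C})/Q_{\mathcal C}(\emptyset)$ — well-defined for every $\mathcal C\subseteq\mathcal B$, since Shearer's criterion, and thus $Q_{\mathcal C}(\emptyset)>0$, passes to sub-collections — satisfies $f(\mathcal B-S)\le f(\mathcal B)$.

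\textbf{Step 2 (monotonicity).} I would prove the stronger claim that $f(\mathcal C)\le f(\mathcal C')$ whenever $\mathcal C\subseteq\mathcal C'\subseteq\mathcal B$, by strong induction on $|\mathcal C'|$. It is enough to treat $\mathcal C'=\mathcal C''\cup\{B_0\}$ with $B_0\notin\mathcal C''$ and then chain one element at a time. From the deletion recurrence $Q_{\mathcal C''\cup\{B_0\}}(\emptyset)=Q_{\mathcal C''}(\emptyset)-P_{\Omega}(B_0)\,Q_{\mathcal C''\setminus N(B_0)}(\emptyset)$ (immediate from the definition of $Q$), the target $f(\mathcal C'')\le f(\mathcal C''\cup\{B_0\})$ is equivalent to $P_{\Omega}(B_0\mid\overline{\mathcal C''})\le P_{\Omega}(B_0)\,Q_{\mathcal C''\setminus N(B_0)}(\emptyset)/Q_{\mathcal C''}(\emptyset)$. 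Writing $\mathcal C''=D\sqcup E$ with $D=\mathcal C''\cap N(B_0)$ and $E=\mathcal C''\setminus N(B_0)$, I would estimate
$$
P_{\Omega}(B_0\mid\overline{\mathcal C''})=\frac{P_{\Omega}(B_0\cap\overline D\cap\overline E)}{P_{\Omega}(\overline D\cap\overline E)}\le\frac{P_{\Omega}(B_0\cap\overline E)}{P_{\Omega}(\overline{\mathcal C''})}=\frac{P_{\Omega}(B_0\mid\overline E)\,P_{\Omega}(\overline E)}{P_{\Omega}(\overline{\mathcal C''})}\le\frac{P_{\Omega}(B_0)\,P_{\Omega}(\overline E)}{P_{\Omega}(\overline{\mathcal C''})},
$$
the last inequality being the defining lopsidependency property, valid since $E\subseteq\mathcal B-N(B_0)$. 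Comparing with the target, it remains to show $P_{\Omega}(\overline E)/P_{\Omega}(\overline{\mathcal C''})\le Q_{E}(\emptyset)/Q_{\mathcal C''}(\emptyset)$, i.e. $f(E)\le f(\mathcal C'')$; since $E\subseteq\mathcal C''$ and $|\mathcal C''|<|\mathcal C'|$, this is exactly the induction hypothesis. Feeding Step 2 back into Step 1 finishes the lemma.

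\textbf{Expected difficulty.} None of the individual ingredients is hard — the M\"obius identity, the deletion recurrence, and the lopsidependency estimate are all routine. The one thing I would be careful about is the \emph{shape of the induction}: the inductive step needs $f(E)\le f(\mathcal C'')$ for an \emph{arbitrary} subset $E\subseteq\mathcal C''$, not just for $\mathcal C''$ with one vertex removed, so the inductive hypothesis must carry the full monotonicity statement "for all $\mathcal C\subseteq\mathcal C'$, $f(\mathcal C)\le f(\mathcal C')$" along the size of the larger set. A minor bookkeeping point worth stating is that each sub-collection $(\mathcal C,\text{induced graph})$ is again a valid lopsidependency instance satisfying Shearer's criterion, so that $Q_{\mathcal C}(\emptyset)>0$ and all the ratios $f(\mathcal C)$ are legitimate.
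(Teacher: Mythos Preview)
Your proof is correct and follows essentially the same route as the paper: both rewrite the conditional probability as a ratio, use the M\"obius identity $\sum_{J\subseteq S}\mu(J)=\tilde Q(\mathcal B-S)/\tilde Q(\mathcal B)$, and reduce to the monotonicity $P_\Omega(\overline{\mathcal C})/\tilde Q(\mathcal C)$ increasing in $\mathcal C$. The only difference is that the paper cites this monotonicity from Shearer's original paper as a black box, whereas you supply a self-contained inductive proof of it via the deletion recurrence and the lopsidependency inequality.
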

\begin{proof}
For any $V \subseteq \mathcal B$, let us define
$$
\tilde Q(V) = \sum_{\substack{I \subseteq V \\ \text{$I$ stable}}} \prod_{B \in I} (-\Pr_{\Omega}(B))
$$

Observe that $\Pr_{\Omega}(\overline S \mid \overline {\mathcal B - S}) = \frac{ \Pr_{\Omega}( \overline{ \mathcal B} ) }{ \Pr_{\Omega} ( \overline{ \mathcal B - S } ) }$. As shown in \cite{shearer}, for any sets $S_1 \subseteq S_2 \subseteq \mathcal B$ we have 
$\frac{ \Pr_{\Omega} (\overline S_1) }{\tilde Q(S_1)} \leq \frac{ \Pr_{\Omega}( \overline S_2) }{\tilde Q(S_2)}$. Thus, setting $S_1 = \mathcal B - S$ and $S_2 = \mathcal B$, we have 
$\Pr_{\Omega}(\overline S \mid \overline {\mathcal B - S}) \geq \frac{ \tilde Q(\mathcal B)}{ \tilde Q( \mathcal B - S)}$. 

Observe that $\tilde Q(\mathcal B) = \sum_{\text{$I$ indepedent}} (-\Pr_{\Omega}(B)) = Q_{\mathcal B}(\emptyset)$. Also, we can expand:
{\allowdisplaybreaks
\begin{align*}
\sum_{I \subseteq S} Q_{\mathcal B}(I) &= \sum_{I \subseteq S} \sum_{\substack{J: I \subseteq J \subseteq \mathcal B \\ \text{$J$ stable}} }(-1)^{|J| - |I|} \prod_{B \in J} \Pr_{\Omega}(B) = \sum_{\substack{J \subseteq \mathcal B \\ \text{$J$ stable}}} \prod_{B \in J} (-\Pr_{\Omega}(B)) \sum_{I \subseteq J \cap S} (-1)^{-|I|} \\
&= \sum_{\substack{J \subseteq \mathcal B \\ \text{$J$ stable} \\ J \cap S = \emptyset}} \prod_{B \in J} (-\Pr_{\Omega}(B)) = \sum_{\substack{J \subseteq \mathcal B - S \\ \text{$J$ stable}}} \prod_{B \in J} (-\Pr_{\Omega}(B)) = \tilde Q( \mathcal B - S)
\end{align*}
}

Thus, $\sum_{I \subseteq S} \mu_{\mathcal B}(I) = \frac{\sum_{I \subseteq S} Q_{\mathcal B}(I)}{Q_{\mathcal B}(\emptyset)} = \frac{\tilde Q( \mathcal B - S)}{\tilde Q(\mathcal B)}$. 
\end{proof}
{
\renewcommand{\thetheorem}{\ref{lll-dist-theta-thm}}
\begin{theorem}
For any event $E$, we have $\Pr_{\Omega} (E \mid \overline{\mathcal B}) \leq \Pr_{\Omega}(E) \Psi_{\mathcal B[E]}(E)$.
\end{theorem}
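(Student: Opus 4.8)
The plan is to pass from the full family of bad-events $\mathcal B$ to the smaller family $\mathcal B[E]$, and then to run the familiar LLL-distribution computation (using the dependency property of $N(E)$ together with Lemma~\ref{mu-lem}) inside that smaller universe. The reduction is essential: it is the only way $\mu_{\mathcal B[E]}$, rather than the coarser $\mu_{\mathcal B}$, can appear in the final bound.

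\emph{First step: the universe reduction.} I would show $P_{\Omega}(E \mid \overline{\mathcal B}) \le P_{\Omega}(E \mid \overline{\mathcal B[E]})$. Every $B \in \mathcal B - \mathcal B[E]$ satisfies $P_{\Omega}(E \mid B) = 1$, so $B \subseteq E$ up to a null set, and hence $\overline E$ is ($\Omega$-a.s.) contained in the event $G := \overline{\mathcal B - \mathcal B[E]}$. Writing also $F := \overline{\mathcal B[E]}$, so that $\overline{\mathcal B} = F \cap G$ and $\overline G \subseteq E$, the set identities $E \cap F \cap \overline G = F \cap \overline G$ and $F = (F \cap G) \sqcup (F \cap \overline G)$ give $P_{\Omega}(E \cap \overline{\mathcal B}) = P_{\Omega}(E \cap F) - P_{\Omega}(F \cap \overline G)$ and $P_{\Omega}(\overline{\mathcal B}) = P_{\Omega}(F) - P_{\Omega}(F \cap \overline G)$. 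Since $P_{\Omega}(F \cap \overline G) \le P_{\Omega}(E \cap F) \le P_{\Omega}(F)$ and $P_{\Omega}(\overline{\mathcal B}) > 0$ (the Shearer criterion, which underlies the definition of $\mu$, is in force, so all these quantities are positive), the elementary fact that $\tfrac{a-c}{b-c} \le \tfrac{a}{b}$ for $0 \le c \le a \le b$ with $b > c$ yields exactly the claimed inequality.

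\emph{Second step: the main computation inside $\mathcal B[E]$.} Shearer's criterion for $\mathcal B$ is inherited by the subfamily $\mathcal B[E]$ (a standard monotonicity property), so $\mu_{\mathcal B[E]}$ is well defined and Lemma~\ref{mu-lem} applies verbatim with $\mathcal B$ replaced by $\mathcal B[E]$. Put $S := N(E) \cap \mathcal B[E]$; then $\mathcal B[E] = S \sqcup (\mathcal B[E] - S)$ and $\mathcal B[E] - S \subseteq \mathcal B - N(E)$, so $S$ is a valid dependency neighborhood for $E$ within the universe $\mathcal B[E]$. Hence
\begin{align*}
P_{\Omega}\bigl(E \cap \overline{\mathcal B[E]}\bigr) &= P_{\Omega}\bigl(E \cap \overline S \cap \overline{\mathcal B[E] - S}\bigr) \le P_{\Omega}\bigl(E \cap \overline{\mathcal B[E] - S}\bigr)\\
&\le P_{\Omega}(E)\, P_{\Omega}\bigl(\overline{\mathcal B[E] - S}\bigr),
\end{align*}
and dividing by $P_{\Omega}(\overline{\mathcal B[E]})$ gives $P_{\Omega}(E \mid \overline{\mathcal B[E]}) \le P_{\Omega}(E) / P_{\Omega}\bigl(\overline S \mid \overline{\mathcal B[E] - S}\bigr)$. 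Lemma~\ref{mu-lem} bounds the denominator below by $\bigl(\sum_{J \subseteq S} \mu_{\mathcal B[E]}(J)\bigr)^{-1}$, and since $\mu_{\mathcal B[E]}$ vanishes off $\mathcal B[E]$ this sum equals $\sum_{J \subseteq N(E)} \mu_{\mathcal B[E]}(J) = \Psi_{\mathcal B[E]}(E)$. Therefore $P_{\Omega}(E \mid \overline{\mathcal B[E]}) \le P_{\Omega}(E) \Psi_{\mathcal B[E]}(E) = \theta_{\mathcal B[E]}(E)$, which combined with the first step completes the proof.

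The step I expect to be the crux is the universe reduction: passing from $\mathcal B$ to $\mathcal B[E]$ shrinks both the numerator and the denominator of the conditional probability, so monotonicity alone is inconclusive; one must exploit the containment $\overline E \subseteq \overline{\mathcal B - \mathcal B[E]}$ that is built into the definition of $\mathcal B[E]$. Everything after that is the standard Haeupler--Saha--Srinivasan / Kolipaka--Szegedy manipulation.
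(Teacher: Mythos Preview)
Your proof is correct and follows essentially the same two-step approach as the paper: first reduce from $\mathcal B$ to $\mathcal B[E]$ using that every $B\in\mathcal B-\mathcal B[E]$ is contained in $E$, then bound $P_\Omega(E\mid\overline{\mathcal B[E]})$ via the dependency property of $N(E)$ and Lemma~\ref{mu-lem}. The only cosmetic difference is in the first step, where the paper argues on $\overline E$ via Bayes' theorem (getting $P_\Omega(\overline E\mid\overline{\mathcal B})=P_\Omega(\overline E\mid\overline S)/P_\Omega(\overline U\mid\overline S)\ge P_\Omega(\overline E\mid\overline S)$) while you argue directly on $E$ with the $\tfrac{a-c}{b-c}\le\tfrac{a}{b}$ inequality; these are equivalent manipulations of the same containment $\overline E\subseteq\overline{\mathcal B-\mathcal B[E]}$.
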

\addtocounter{theorem}{-1}
}
\begin{proof}
Let $S = \mathcal B[E]$ and $U = \mathcal B - S$. We use Bayes' Theorem, taking into account that events in $U$ are all subsets of $E$:
\begin{align*}
\Pr_{\Omega} (\overline E \mid \overline{\mathcal B}) &= \frac{\Pr_{\Omega}( \overline U \mid \overline E, \overline S) \Pr_{\Omega}(\overline E \mid \overline S)}{\Pr_{\Omega}(\overline U \mid \overline S)} = \frac{\Pr_{\Omega}(\overline E \mid \overline S)}{\Pr_{\Omega}(\overline U \mid \overline S)} \geq \Pr_{\Omega}(\overline E \mid \overline S)
 \end{align*}
 
To finish, we need to show that $\Pr_{\Omega}(E \mid \overline S) \leq \Pr_{\Omega}(E) \Psi_{\mathcal B[E]}(E)$. By Lemma~\ref{mu-lem}, we have:
\begin{align*}
\Pr_{\Omega}(E \mid \overline S) &= \frac{\Pr_{\Omega}( E, \overline{N(E) \cap S} \mid \overline{S - N(E)})}{\Pr_{\Omega}(\overline{N(E) \cap S} \mid \overline{S - N(E)})} \leq  \frac{\Pr_{\Omega}( E)}{\Pr_{\Omega}(\overline{N(E) \cap S} \mid \overline{S - N(E)})} \\
&\leq  \Pr_{\Omega}( E) \sum_{J \subseteq N(E) \cap S} \mu_{\mathcal B[E]}(J) \leq  \Pr_{\Omega}( E) \Psi_{\mathcal B[E]}(E) \qedhere
\end{align*}
\end{proof}

\section{Analysis for Theorem~\ref{ftrans-prop2}}
\label{math-app}
Recall that we have defined parameters $v = |L|/b < 1, \alpha = \sqrt{1 - 4 b/\Delta} \in [0,1], z = \max(0, \frac{1 - 3 \alpha}{1 - \alpha} )$ as well as the function $f(y) = \frac{2 y}{1 + y + (1 - y) \alpha}, S(x_1, \dots, x_k) = \sum_{i=1}^k f(x_i) \prod_{j=1}^{i-1} (1 - x_j)$.

Let us first observe that $V_n \leq V_{n+1}$ for any integer $n$. For, given any vector $(x_1, \dots, x_n) \in Q_{n, \ell}$, the vector $x' = (x_1, \dots, x_{\ell}, 0, x_{\ell+1}, \dots, x_n)$ is in $Q_{n+1, \ell+1}$, and has $S(x') = S(x)$.

Let us now consider some integer $n$ and choose some vector $x \in Q_n$ to maximize $S(x)$. This exists by compactness of $Q_n$. There may be multiple such maximizing vectors, so we use the tie-breaking rules of selecting $x$ to \emph{minimize} the number of coordinates $i$ with $x_i = z$.

We first claim that $x \in Q_{n, n-1} \cup Q_{n,n}$. For, let $\ell$ be maximal such that $x \in Q_{n,\ell}$; if $\ell \geq n-1$ then we are done. If $\ell < n-1$, then $z \leq x_{n-1} \leq x_n$. If $z = 0$, then $x_{n-1} > 0$, as otherwise we would have $x_{\ell} = \dots = x_{n-1} = 0$, in which case $x \in Q_{n,n-1}$. Finally, by property (A3), we have $x_{n-1} + x_n \leq v \leq 1$.

Now define the vector $x' = (x_1, \dots, x_{\ell}, 0, x_{\ell+1}, \dots, x_{n-1} + x_n)$, and observe that $x' \in Q_{n, \ell+1}$. Furthermore, we have
$$
S(x') - S(x) = \Bigl( \prod_{i=1}^{n-2} (1 - x_i/b) \Bigr) \Bigl( f(x_{n-1} + x_n) - ( f(x_{n-1}) + f(x_n) (1 - x_{n-1}/b) \Bigr)
$$

We claim that for any real numbers $w_1, w_2$ satisfying $w_1 \geq z, w_2 \geq w_1 > 0, w_1 + w_2 \leq 1$ we have $f(w_1)  + f(w_2) (1 - w_1) < f(w_1 + w_2)$. This algebraic inequality can be verified by using standard algorithms for decidability of real-closed fields.\footnote{To verify this using the Mathematica computer algebra system, use:
  \vspace{-0.1in}
\begin{scriptsize}
\begin{verbatim}
f[y_] = 2 y/(1 + (1 - y) alpha + y)
z = (1 - 3 alpha)/(1 - alpha)
Reduce[f[w1] + f[w2] (1 - w1) >= f[w1 + w2] && w1 >= z && w2 >= w1 && 0 <= alpha < 1/3]
Reduce[f[w1] + f[w2] (1 - w1) >= f[w1 + w2] && w1 > 0 &&  w2 >= w1 && (w1 + w2) <= 1 && alpha >= 1/3]
\end{verbatim}
\end{scriptsize}
} Since $x_{n-1}, x_n$ satisfy these conditions (playing the role of $w_1, w_2$ respectively), we have $S(x') > S(x)$, which is a contradiction to our choice of $x$.

We next claim that $x_1 = x_2 = \dots = x_{n-1}$. For, suppose that there is some index $p < n-1$ with $x_p > x_{p+1}$. Since $x \in Q_{n,n-1} \cup Q_{n,n}$, necessarily $z \geq x_p > x_{p+1}$. Note that this must imply that $\alpha < 1/3$, as otherwise $z = 0$. Now consider the vector $x'$ obtained by replacing $x_p$ and $x_{p+1}$ by the mean value $u = \frac{x_p + x_{p+1}}{2}$. Since $x \in Q_{n,\ell}$ then also $x' \in Q_{n, \ell}$, and furthermore we have:
\begin{align*}
S(x') - S(x) &= \Bigl( \prod_{j=1}^{p-1} 1 - x_j \Bigr) \Bigl(  f(u) + f(u)(1 - u) -  f(x_p) - f(x_{p+1}) (1 - x_p)
\\
& \qquad \qquad
+\sum_{i=p+2}^n f(x_i) \bigl( (1 - u)^2 -  (1-x_p)(1-x_{p+1}) \bigr) \Bigr)
\end{align*}

Clearly $(1-u)^2 \geq (1 - x_p)(1-x_{p+1})$. Also, by (A3), we must have $x_j < 1$ for $j \leq p-1$.  Furthermore, we can mechanically verify the following algebraic inequalities: for $z \geq w_1 \geq w_2 \geq 0$ it holds that
$$
f(w_1) + f(w_2)(1 - w_1) \leq f( \frac{w_1 + w_2}{2}) + f( \frac{w_1 + w_2}{2} ) (1 - \frac{w_1 + w_2}{2})
$$
and furthermore, if $w_1 < z$, this inequality is strict.\footnote{To verify in Mathematica:
\vspace{-0.1in}  
\begin{scriptsize}
\begin{verbatim}
f[y_] = 2 y/(1 + (1 - y) alpha + y)
z = (1 - 3 alpha)/(1 - alpha)
Reduce[f[w1] + f[w2] (1 - w1) >  f[(w1 + w2)/2] (2 - (w1 + w2)/2) && z >= w1 > w2 >= 0 && 0 <= alpha < 1/3]
Reduce[f[w1] + f[w2] (1 - w1) >= f[(w1 + w2)/2] (2 - (w1 + w2)/2) && z > w1 > w2 >= 0 && 0 <= alpha < 1/3]
\end{verbatim}
\end{scriptsize}}

As a consequence of these facts, we have $S(x') \geq S(x)$, and the inequality is strict unless $x_p = z$. If $x_p < z$, then this contradicts that $x$ maximizes $S(x)$; if $x_p = z$, this contradicts that $x$ minimizes the number of coordinates with $x_i = z$.

Thus, we have shown that $x$ must have the form $x = (t, \dots, t, u)$. By Property (A3) we have $u \in [0,v]$ and $t = \frac{v - u}{n-1}$. So
$$
S(x) = \sum_{i=1}^{n-1} f(t) (1 - t)^{i-1} + f(u) (1-t)^{n-1} = f \Bigl( \frac{v-u}{n-1} \Bigr) \frac{n-1}{v - u} (1 - (1 - \tfrac{v-u}{n-1})^n) + f(u) (1 - \tfrac{ v - u}{n-1})^{n-1}
$$

Thus, we have shown that
$$
V_n \leq \max_{u \in [0,v]}  f \Bigl( \frac{v-u}{n-1} \Bigr) \frac{n-1}{v  - u } (1 - (1 - \tfrac{v-u}{n-1})^{n-1}) + f(u) (1 - \tfrac{ v - u}{n-1})^{n-1}
$$
Since $V_n \leq V_{n+i}$ for all $i \geq 0$, we have:
$$
V_k \leq \inf_{n \geq k} V_{n} \leq \inf_{n+1 \geq k} \max_{u \in [0,v]} f \Bigl(\frac{v-u}{n} \Bigr) \frac{n}{v  - u } (1 - (1 - \tfrac{v-u}{n})^{n}) + f(u) (1 - \tfrac{ v - u}{n})^{n}
$$

For any given value $u \in [0,v]$, the quantity $f \bigl(\frac{v-u}{n} \bigr) \frac{n}{v  - u } (1 - (1 - \frac{v-u}{n})^{n}) + f(u) (1 - \frac{ v - u}{n})^{n}$ converges pointwise to $\frac{2 (1-e^{u-v})}{1 + \alpha} + f(u) e^{u-v}$ as $n \rightarrow \infty$. Since $u$ is maximized over a compact domain $[0,v]$, the maximization commutes with the pointwise limit, and so we have
\begin{align*}
  V_k &\leq \max_{u \in [0,v]} \lim_{n \rightarrow \infty} f \Bigl(\frac{v-u}{n} \Bigr) \frac{n}{v  - u } (1 - (1 - \tfrac{v-u}{n})^{n}) + f(u) (1 - \tfrac{ v - u}{n})^{n} \\
  &= \max_{u \in [0,v]} \frac{2 (1-e^{u-v})}{1 + \alpha} + \frac{2 u e^{u-v}}{1 + u + \alpha(1-u)}
\end{align*}

If we set $r = e^u, s = e^v$, then with simple algebraic substitutions we have shown that
$$
V_k \leq \max_{r \in [1,s]} g(r) \qquad \text{where we define } g(r) := \frac{2 (1 - r/s)}{1 + \alpha} + \frac{2 \ln r \times r/s}{1 + \ln r + (1-\ln r) \alpha}
$$

To finish the proof, we claim that $g(r)$ attains its maximum value at one of the endpoints $r = 1$ or $r = s$. Since $g(1) = \frac{2 (1 - e^{-v})}{1 + \alpha}$, while $g(s) = \frac{2 v}{1 + v + (1 - v) \alpha}$, this will show the desired bound on $V_n$.

Now note that $g'(1) = 0$ and $g''(1) = \frac{6 \alpha - 2}{(1 + \alpha)^2 s}$ and $g''(r)$ has a single root at $r = r_0 = e^{\frac{3 \alpha - 1}{\alpha -1}}$.

If $\alpha < 1/3$, we see that $g''(1) < 0$, and so the function $g$ is initially decreasing, and possibly later increasing for $r \geq r_0$. This implies that its maximum value must occur at one of the endpoints $r = 1$ or $r = s$.

Likewise, if $\alpha > 1/3$, then $g''(1) > 0$ and that $r_0 < 1$, which implies that the function $g$ is increasing throughout its domain. so its maximum value occurs at the endpoint $r = s$.

Finally, if $\alpha = 1/3$, then $g''(1) = 0, g'''(1) = \frac{3}{4 s} > 0$. Again, this implies that the function $g$ is increasing throughout its domain and its maximum value occurs at the endpoint $r = s$.

\section{Comparison with the work of \cite{harris-llll}}
\label{compare-sec}
The definition of an orderable set for permutations is inspired by a similar criterion of of \cite{harris-llll} for the variable-assignment LLL. There is one major difference between Theorem~\ref{fnew-lll-prop} and the result of \cite{harris-llll}. Here, as we build witness trees, we only enforce the condition that the children of the \emph{root node} are orderable; in \cite{harris-llll}, it was required that \emph{for every node labeled $B$ in the witness tree}, the children of that node were orderable to $B$. 

While our analysis for permutations only cuts down the space of witness trees by a constant factor, the latter work cuts down it down by a factor which is exponential in the size of the tree.  The overall convergence of the MT algorithm is determined by the \emph{overall growth rate} of witness trees as a function of their size. Thus, \cite{harris-llll} yields a stronger overall bound on the convergence of the MT algorithm, which is not possible for us.

We make the following conjecture that a similar witness tree lemma should still be possible in the permutation LLL setting, where the orderability conditioned is enforced at all the nodes:
\begin{conjecture}
\label{ford-conj}
Suppose that one builds witness trees while enforcing the condition that, for every node $v \in \tau$ labeled $B$, the children of $v$ receive distinct labels $B_1, \dots, B_s$, such that $\{ B_1, \dots, B_s \}$ is orderable to $B$. Then for any tree-structure $\tau$ we have $\Pr(\text{$\tau$ appears}) \leq w(\tau)$.
\end{conjecture}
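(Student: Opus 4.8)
The plan is to adapt the witness-tree analysis of \cite{prev-lll} for the Swapping Algorithm, keeping almost all of that argument intact and modifying only the bookkeeping at the root so that the new orderability restriction is exploited rather than discarded. As in \cite{prev-lll}, I would expose the randomness of the Swapping Algorithm — the initial permutation $\pi^0$ together with the swap choices $x_i'$ — gradually by deferred decisions, and process the nodes of $\tau$ in reverse chronological order, i.e.\ according to the times $t = T, T-1, \dots, 1$ at which the construction of $\hat\tau^{T,A}$ inspected the resamplings $B_T, \dots, B_1$. For each node $v$ that the construction added to the tree there is a corresponding resampling; the hypothesis that $\tau$ appears forces (i) that the label $L(v)$ held on the permutation immediately before that resampling, and (ii) that the swap performed there moved the relevant values exactly as dictated by the tree's edge structure (so that $v$'s descendants correctly feed into $v$, and so that $v$ was attached as a child of its stated parent and not of some deeper node). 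The root $r$ contributes the terminal requirement that $A$ holds at the end.

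The technical heart, carried over essentially unchanged from \cite{prev-lll}, is the swap re-randomization principle: when a bad-event $B = \{(x_1,y_1),\dots,(x_r,y_r)\}$ is resampled, then conditioned on $B$ being true just before the resampling (so $\pi(x_i) = y_i$) and on everything revealed so far, the swap deletes the pairs $(x_i,y_i)$ from the known part of $\pi$ and re-randomizes those cells, so that the part of $\pi$ examined by any earlier-in-time recorded node is still uniform conditionally on the revealed state. This makes the checks telescope: going through the recorded nodes in reverse time order, the check at a node labeled $B$ contributes a conditional probability at most $P_{\Omega}(B)$, so the product over all nodes — including the root, contributing $P_{\Omega}(A)$ — is at most $\prod_{v \in \tau} P_{\Omega}(L(v)) = w(\tau)$. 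One point absent from \cite{prev-lll} is that the present construction \emph{skips} certain resamplings — those $B_t$ that attach neither to a deeper node nor, through failure of orderability, to the root — and one must check that such a skipped resampling merely imposes additional (ignorable) constraints and does not corrupt the re-randomization invariant used by the recorded nodes; this holds because the skipped resamplings occur before, hence are independent of, the cells consumed by the recorded nodes during the telescoping.

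The genuinely new ingredient is the accounting at the root. The children $B_1,\dots,B_s$ of $r$ all satisfy $B_i \sim A$, so each of their resamplings must, through its swap, touch some coordinate of $A$; were $\{B_1,\dots,B_s\}$ not orderable, there would be too few distinct coordinates of $A$ to absorb all of these touches and the telescoping product would overshoot $w(\tau)$ — precisely the behaviour exposed by the small example $B_1 \equiv \pi(1)=2$, $B_2 \equiv \pi(2)=1$, $A \equiv \pi(1)=1$. Since the construction only attaches $B_t$ to $r$ when the resulting child set is orderable to $A$, there is an ordering $B_1,\dots,B_s$ with distinct witnesses $z_1,\dots,z_s \in A$ such that $z_i \sim B_i$ and $z_i \not\sim B_1,\dots,B_{i-1}$; processing the children in the order $i = s, s-1, \dots, 1$ and charging the touch of $A$ by $B_i$ to the coordinate $z_i$ — which by the $\not\sim$ condition is untouched by any already-processed $B_j$, $j < i$ — recovers the clean factor $P_{\Omega}(A)$ for the cumulative root-plus-children contribution instead of an over-count. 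The main obstacle I anticipate is exactly this permutation bookkeeping: giving a rigorous definition of the partial revealed state, proving the swap re-randomization lemma in the presence of the ordering constraints on the $x_i'$, and checking that the skip-steps and the root-charging interact correctly. The combinatorics of which positions and values are ``alive'' at each stage is delicate, which is why the complete argument belongs in Appendix~\ref{permutation-appendix}.
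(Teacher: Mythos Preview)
The statement you are addressing is a \emph{conjecture}: the paper does not prove it, and in fact explicitly says that the authors ``were not able to maintain this control if the restriction on orderable sets was enforced at all the nodes.'' So there is no paper proof to compare against; the relevant question is whether your proposal closes the gap the paper leaves open.

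It does not. Your sketch only handles orderability at the \emph{root}: you speak of ``modifying only the bookkeeping at the root,'' and your charging argument assigns the children $B_1,\dots,B_s$ of $r$ to distinct coordinates $z_1,\dots,z_s \in A$. That is precisely the content of the paper's Lemma~\ref{fwitness-tree-lemma} (the Witness Tree Lemma), which the paper does prove in Appendix~\ref{permutation-appendix}. The conjecture, by contrast, enforces orderability at \emph{every} node $v$ labeled $B$: the children of $v$ must form a set orderable to $B$. This means that resamplings can be \emph{skipped} not only at the root but at arbitrary interior nodes of $\tau$. Your proposal waves this away with the claim that a skipped resampling ``merely imposes additional (ignorable) constraints and does not corrupt the re-randomization invariant,'' but that is exactly the point at which the paper's argument breaks down. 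In the proof of Proposition~\ref{ffuture-prop2a} (and hence Proposition~\ref{ffuture-prop2}), the analysis relies on the fact that a resampled bad-event $B$ with $B \sim B''_i$ for some existing child $B''_i$ \emph{is} placed into the tree (as a child of $B''_i$ or deeper); if instead $B$ is skipped because attaching it would violate orderability at $B''_i$'s parent, the argument that source nodes of the future-subgraph correspond to true conditions on $\pi^t$ no longer goes through. The temporal bookkeeping --- which events have already been resampled, and which W-configuration endpoints are therefore fixed --- becomes entangled with the combinatorics of which events were skipped at which interior nodes, and your proposal offers no mechanism to track this.

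In short, you have re-sketched the paper's proved lemma rather than its open conjecture; the genuine obstacle is controlling skipped resamplings at interior nodes, and your proposal neither identifies nor addresses it.
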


We remark that Swapping Algorithm and/or the witness tree generation process must be changed in some way in order for Conjecture~\ref{ford-conj} to hold. If the same procedure as in Algorithm~\ref{gen-tau-alg} is used directly, then \cite{kolm-personal-com} has shown counter-examples to Conjecture~\ref{ford-conj}.

Conjecture~\ref{ford-conj} would immediately give the following result:
\begin{conjecture}
\label{ford-conj2}
Suppose that the weighting function $\tilde \mu: \mathcal B \rightarrow [0, \infty)$ satisfies the criterion
$$
\forall B \in \mathcal B \qquad \tilde \mu(B) \geq \Pr_{\Omega}(B) \sum_{\substack{T \subseteq N(B) \\ \text{$T$ orderable to $B$}}} \prod_{A \in T} \tilde \mu(A)
$$

Then the Swapping Algorithm terminates with probability one; the expected number of resamplings of $B$ is at most $\tilde \mu(B)$.
\end{conjecture}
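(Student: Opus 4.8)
The plan is to derive Conjecture~\ref{ford-conj2} from Conjecture~\ref{ford-conj} in exactly the way Proposition~\ref{fmt-main-prop} is derived from the standard witness tree lemma, and thereby to isolate the genuinely open ingredient, which is Conjecture~\ref{ford-conj} itself. So first I would assume Conjecture~\ref{ford-conj}: every tree-structure $\tau$ produced by the ``orderable at every node'' construction appears with probability at most $w(\tau)$. Fix $B \in \mathcal B$. As in Section~\ref{fmt-sec}, if the Swapping Algorithm resamples $B$ at times $t_1 < t_2 < \cdots$, the witness trees $\hat\tau^{t_i,B}$ are pairwise distinct tree-structures rooted in $B$ that all appear; hence the number of resamplings of $B$ is at most the number of such tree-structures that appear, so its expectation is at most $\sum_\tau w(\tau)$, the sum ranging over all tree-structures rooted in $B$ built under the everywhere-orderable rule.

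Second, I would bound $\sum_\tau w(\tau)$ by $\tilde\mu(B)$ with a fixed-point argument. For $A \in \mathcal B$, let $W(A)$ be the total weight of all everywhere-orderable tree-structures rooted in $A$. Peeling off the root: its children carry pairwise-distinct labels forming a set $I \subseteq N(A)$ orderable to $A$, each child is itself the root of such a tree-structure, and conversely any such data assembles into a valid tree-structure, with weights multiplying. Hence $W(A) = P_\Omega(A)\sum_{I}\prod_{A'\in I} W(A')$, where $I$ ranges over the subsets of $N(A)$ orderable to $A$ (read as $+\infty$ if the sum diverges). The hypothesis says $\tilde\mu$ satisfies the same relation with ``$\geq$'', and the right-hand operator is coordinatewise monotone; truncating tree-structures at depth $d$, inducting on $d$ (the base case using that $\emptyset$ is orderable, so $\tilde\mu(A) \geq P_\Omega(A)$), and letting $d \to \infty$ then gives $W(A) \leq \tilde\mu(A) < \infty$ for all $A$. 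In particular each $B$ is resampled finitely often in expectation, so (as $\mathcal B$ is finite) the Swapping Algorithm terminates with probability one, and the expected number of resamplings of $B$ is at most $W(B) \leq \tilde\mu(B)$.

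The main obstacle is Conjecture~\ref{ford-conj} itself — that enforcing orderability at every internal node, rather than only at the root, still yields $P(\tau \text{ appears}) \leq w(\tau)$. The paper's remarks in Appendix~\ref{compare-sec} already flag this: the analysis of the Swapping Algorithm (following \cite{prev-lll} and deferred to the permutation appendix) needs very tight control over the temporal order in which swaps occur, and the everywhere-orderable rule deletes edges from the witness tree in a way that breaks the bookkeeping used to reconstruct that order and to charge each unit of ``$1/n$''-type probability to a distinct node. I would try to adapt that analysis by carrying, alongside each tree-structure, an auxiliary swap-history record rich enough to replay the algorithm on the affected coordinates, and to show this record is a deterministic function of the tree together with an independent family of uniform choices, each contributing its own probability factor; the sticking point is that under the everywhere-orderable rule a node can be responsible for a swap whose effect surfaces only several resamplings later, so the factors no longer align one-to-one with nodes. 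Until this is resolved, Conjecture~\ref{ford-conj2} necessarily remains conditional on Conjecture~\ref{ford-conj} — which is exactly why the authors state it as a conjecture.
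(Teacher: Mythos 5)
The paper states Conjecture~\ref{ford-conj2} without proof, remarking only that it would follow ``immediately'' from Conjecture~\ref{ford-conj}; your proposal supplies exactly that implication via the standard Moser--Tardos argument (sum the weights of everywhere-orderable witness trees rooted in $B$, then bound the sum by $\tilde\mu(B)$ through a depth-induction fixed-point argument using orderability of $\emptyset$ for the base case), and correctly leaves Conjecture~\ref{ford-conj} as the genuine open ingredient. Your diagnosis of why the Witness Tree Lemma resists the everywhere-orderable extension also matches the paper's own remarks in Appendix~\ref{compare-sec}, so this is the same (conditional) route the paper implicitly has in mind.
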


Conjecture~\ref{ford-conj2} would yield stronger bounds for Latin transversals, hypergraph packings, and other applications. As an example:
\begin{corollary}
Suppose that Conjecture~\ref{ford-conj2} holds. Then, for any $n \times n$ colored array $A$, in which each color appears at most $\Delta = n/8$ times, there is a Latin transversal of $A$ with no repeated colors.
\end{corollary}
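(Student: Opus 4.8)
The plan is to cast the Latin transversal problem in the Swapping Algorithm framework and verify the hypothesized orderable-set cluster-expansion criterion of Conjecture~\ref{ford-conj2}. Take $\Omega = S_n$, and for each unordered pair of distinct cells $(x_1,y_1),(x_2,y_2)$ with $A(x_1,y_1)=A(x_2,y_2)$ introduce the atomic bad-event $B=\{(x_1,y_1),(x_2,y_2)\}$, i.e.\ $\pi(x_1)=y_1\wedge\pi(x_2)=y_2$; note $P_\Omega(B)=\frac{1}{n(n-1)}$. A permutation avoiding all of $\mathcal B$ is exactly a Latin transversal with no repeated colors, so once the criterion of Conjecture~\ref{ford-conj2} is checked for a finite weighting $\tilde\mu$, that conjecture guarantees the Swapping Algorithm terminates with probability one and hence such a transversal exists.

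The first real step is the structural observation about orderable sets. Viewed as a set of cells, $B$ has exactly two elements, so any orderable set $\{B_1,\dots,B_\ell\}$ to $B$ has $\ell\le 2$: the associated witnesses $z_1,\dots,z_\ell\in B$ are forced to be distinct (if $z_i=z_j$ with $i<j$ then $z_j\sim B_j$ yet $z_j\not\sim B_i$). Writing $B=\{c_1,c_2\}$, an orderable set of size two consists of one bad-event overlapping $c_1$ and one overlapping $c_2$, and an orderable singleton overlaps $c_1$ or $c_2$. Setting $\tilde\mu\equiv\alpha$, an injection onto such pairs of at-most-singletons gives
$$\sum_{\substack{T\subseteq N(B)\\ T\text{ orderable to }B}}\ \prod_{B'\in T}\tilde\mu(B')\ \le\ (1+W_1)(1+W_2),$$
where $W_i=\alpha\cdot\#\{B'\in\mathcal B:B'\sim c_i\}$. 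Next I would bound $W_i$: a bad-event overlapping $c_i=(x_i,y_i)$ must contain one of the $2n-1$ cells lying in row $x_i$ or column $y_i$, and each cell lies in at most $\Delta-1$ bad-events, so $W_i\le(2n-1)(\Delta-1)\alpha$.

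It then remains to choose $\alpha$ so that $\alpha\ge\frac{1}{n(n-1)}\bigl(1+(2n-1)(\Delta-1)\alpha\bigr)^2$. Writing $\rho=\frac{(2n-1)(\Delta-1)}{n(n-1)}$ and $u=(2n-1)(\Delta-1)\alpha$, this is $u\ge\rho(1+u)^2$, equivalently $\rho u^2+(2\rho-1)u+\rho\le 0$, which admits a nonnegative solution iff $1-4\rho\ge 0$. For $\Delta=n/8$ the inequality $4(2n-1)(\Delta-1)\le n(n-1)$ reduces to $15n\ge 8$, so $\rho\le\tfrac14$; taking $\alpha=\frac{1}{(2n-1)(\Delta-1)}\cdot\frac{1-2\rho-\sqrt{1-4\rho}}{2\rho}$ (finite and positive) verifies the displayed inequality, so Conjecture~\ref{ford-conj2} applies with this $\tilde\mu$, the Swapping Algorithm terminates almost surely, and its output is a Latin transversal of $A$ with no repeated colors.

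I expect the only delicate point to be the finite-$n$ bookkeeping in the last step — carrying $(2n-1)(\Delta-1)$ rather than $2n\Delta$ — and this is exactly what makes $\Delta=n/8$ attainable. It is worth noting where the improvement over the $\Delta\le 27n/256$ bound of Proposition~\ref{ftrans-prop1}/\cite{bissacot} comes from: a merely \emph{independent} subset of $N(B)$ may contain up to four bad-events, one for each of the two rows and two columns of $B$, producing the fourth-power inequality $\alpha\ge\frac{1}{n(n-1)}(1+2n\Delta\alpha)^4$ whose optimum is $\beta\le 27/256$; restricting to \emph{orderable} sets collapses this to a square, which is precisely the source of the stronger bound $\beta\le 1/8$.
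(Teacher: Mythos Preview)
Your proof is correct and follows essentially the same approach as the paper: define bad-events for same-colored pairs, set $\tilde\mu\equiv\alpha$, observe that orderable sets to a two-cell event $B$ have size at most $2$, and reduce the criterion of Conjecture~\ref{ford-conj2} to a quadratic inequality in $\alpha$ that is satisfiable for $\Delta=n/8$. The paper records the resulting inequality as $\alpha\ge\frac{1}{n(n-1)}(1+2n(\Delta-1)\alpha)^2$ without spelling out the orderability argument; you supply that argument explicitly and use the marginally sharper count $(2n-1)(\Delta-1)$ in place of $2n(\Delta-1)$, but the two computations are otherwise identical.
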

\begin{proof}
For each quadruple $(i_1, j_1, i_2, j_2)$ with $A(i_1, j_1) = A(i_2, j_2)$ we have a separate bad-event. Define $\tilde \mu(B) = \alpha$ for all bad-events. Then Conjecture~\ref{ford-conj2} reduces to showing that 
$$
\alpha \geq \frac{1}{n(n-1)} (1 + 2 n (\Delta-1) \alpha)^2.
$$ 
This can be satisfied for $\Delta = n/8$.
\end{proof}

\section{Proof of Theorem~\ref{fnew-lll-prop}}
\label{permutation-appendix}

We will break down our overall analysis into three stages.
\begin{enumerate}
\item We transform a given tree-structure $\tau$ into a \emph{witness subdag}; this is a similar object to the witness tree, but instead of giving the history of resamplings of bad-events, it gives a history of individual swappings. We define and describe some structural properties of these graphs.
\item We define the \emph{future-subgraph} at time $t$, denoted $G_t$. This is a kind of graph which encodes conditions on $\pi_t$ which are necessary in order for $\tau$ to appear.  We analyze how a future-subgraph $G_t$ imposes conditions on the corresponding permutation $\pi_t$.
\item  We compute the probability that the swapping satisfies these conditions over time.
\end{enumerate}

\subsection{Witness subdags}
\begin{proposition}
\label{ind-prop1}
Suppose that $\tau$ is a witness tree, and that distinct nodes $v, v' \in \tau$ have the same depth. Then $L(v) \not \sim L(v')$.
\end{proposition}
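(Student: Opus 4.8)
The plan is to argue by contradiction, exploiting the greedy ``attach to the deepest admissible node'' rule in the procedure that generates $\hat\tau^{T,A}$. Suppose $v,v'$ are distinct nodes of $\tau$ sitting at a common depth $d$, with labels $B,B'$ satisfying $B\sim B'$. First I would note that the root $r$ is the unique node of depth $0$, so $d\geq 1$ and both $v,v'$ are non-root; hence each of them was created during some specific iteration of the backward loop (over $t=T,\dots,1$) in the construction. Since a single iteration adds at most one node, the two creation times differ; relabeling if necessary, say $v$ is created at step $s$ and $v'$ at step $t$ with $s>t$.

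Next I would inspect the partial tree at the moment $v'$ is created. Because the loop runs downward in $t$ and nodes are only ever added (never removed or relocated), the tree $\hat\tau^{T,A}_{t+1}$ available at iteration $t$ already contains $v$ (as $t+1\le s$), and $v$ still lies at depth $d$. The event resampled at step $t$ is $B_t=B'$, and $v\in\hat\tau^{T,A}_{t+1}-r$ carries the label $B\sim B_t$; therefore the first \textbf{if} branch of the procedure applies. By that branch, $v'$ is attached as a child of a node of \emph{greatest} depth among the non-root nodes whose label is $\sim B_t$. Since $v$ is one such node, the parent of $v'$ has depth at least $d$, and so $v'$ has depth at least $d+1$ --- contradicting that $v'$ has depth $d$.

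The steps that need a little care are purely bookkeeping: verifying that $v$ is genuinely present in $\hat\tau^{T,A}_{t+1}$ and keeps its depth (immediate, since the construction never deletes or moves nodes), and verifying that it is the first branch --- not the root-attachment branch or the no-op branch --- that fires when $v'$ is created (it does, precisely because the non-root node $v$ exhibits a label $\sim B_t$). Since $B\sim B$ holds for every event in this setting, the same argument in fact shows that two distinct nodes at equal depth cannot even share a label. I do not expect a genuine obstacle here; the one thing to keep straight is the direction of time in the backward construction loop together with the fact that the depth of an already-placed node is frozen once it is attached.
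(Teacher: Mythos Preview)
Your argument is correct and matches the paper's proof: both observe that neither node is the root, and that when the earlier-in-the-loop event is processed, the already-present node forces attachment at depth at least $d+1$. You have simply spelled out the bookkeeping (direction of the backward loop, which branch fires, persistence of depths) that the paper compresses into a single sentence.
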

\begin{proof}
We may assume that $v, v'$ are not the root node. Thus, $L(v), L(v')$ are both bad-events (as opposed to the justifying event $A$). The earlier event would be eligible to placed as a child of the later one, thus will be placed either there or lower in the tree.
\end{proof}

\begin{definition}[Witness subdag]
A \emph{witness subdag} is defined to be a directed acyclic simple graph, whose nodes are labeled with tuples $(B,x,y)$; if a node $v$ is labeled by $(B,x,y)$, we write $v \approx (x,y)$. This graph must in addition satisfy the following properties:
\begin{enumerate}
\item If for distinct nodes $v, v'$ we have $v \approx (x,y) \sim (x', y') \approx v'$, then either there is a path from $v$ to $v'$ or a path from $v'$ to $v$.
\item Every node of $G$ has in-degree at most two and out-degree at most two.
\end{enumerate}
\end{definition}

The witness subdags are derived from witness trees in the following manner.
\begin{definition}[Projection of a witness tree]
For a witness tree $\tau$, we define the \emph{projection of $\tau$} (denoted $\text{Proj}(\tau)$), as follows.

Suppose we have a node $v \in \tau$ labeled by an event $E = \{ (x_1, y_1), \dots, (x_r, y_r) \}$. For each $i = 1, \dots, r$, we create a corresponding node $v'_i$ labeled $(E,x_i, y_i)$ in the graph $\text{Proj}(\tau)$. 

The edges of $\text{Proj}(\tau)$ are formed follows. For each node $v' \in \text{Proj}(\tau)$, labeled by $(E, x,y)$ and corresponding to $v \in \tau$, we find the node $w_x \in \tau$ (if any) which satisfies the following properties:
\begin{enumerate}
\item[(P1)] The depth of $w_x$ is smaller than the depth of $v$.
\item[(P2)] $w_x$ is labeled by some $E'$ which demands $(x, y')$.
\item[(P3)] Among all vertices satisfying (P1), (P2), the depth of $w_x$ is maximal.
\end{enumerate}

If this node $w_x \in \tau$ exists, then it corresponds to a node $w_x' \in \text{Proj}(\tau)$ labeled $(x, y')$; we construct an edge from $v'$ to $w_x'$. By Proposition~\ref{ind-prop1}, the labels in any level of the witness tree form a stable set, thus there can be at most one such $w_x$ and at most one such $w_x'$.

We similarly define a node $w_y$, where instead of requiring $E'$ to demand $(x,y')$ we require $E'$ to demand $(x', y)$. If this node exists, we create an edge from $v'$ to the corresponding $w_y' \in \text{Proj}(\tau)$ labeled $(x',y)$. Note that it is possible to have  $w_x = w_y$.
\end{definition}

Since edges in $\text{Proj}(\tau)$ correspond to \emph{strictly} smaller depth in $\tau$, the graph $\text{Proj} (\tau)$ is acyclic.

\begin{definition}[Alternating paths] Given a witness subdag $G$, we define an \emph{alternating path} in $G$ to be a simple path which alternately proceeds forward and backward along the directed edges of $G$. For a vertex $v \in G$, the \emph{forward (respectively backward) path} of $v$ in $G$, is the maximal alternating path which includes $v$ and all the forward (respectively backward) edges emanating from $v$. (These paths are unique since $G$ has in-degree and out-degree at most two). Note that if $v$ is a source node, then its backward path contains just $v$ itself.
\end{definition}

One type of alternating path, which is referred to as the \emph{W-configuration}, plays a particularly important role.

\begin{definition}[The W-configuration]
Suppose $v \approx (x, y)$ has in-degree at most one, and the backward path contain an \emph{even} number of edges, terminating at vertex $v' \approx (x', y')$. We refer to this alternating path as a \emph{W-configuration}. (See Figure~\ref{ffig1}.)

Any W-configuration can be written (in one of its two orientations) as a path of vertices labeled $$
(x_0, y_1), (x_1, y_1), (x_2, y_1), \dots, (x_s, y_s), (x_s, y_{s+1});
$$ here the vertices $(x_1, y_1), \dots, (x_s, y_s)$ are at the ``base'' of the W-configuration. Note here that we have written the path so that the $x$-coordinate changes, then the $y$-coordinate, then $x$, and so on. When written this way, we refer to $(x_0, y_{s+1})$ as the \emph{endpoints} of the W-configuration.

Note that if $v \approx (x,y)$ is a source node, then it defines a W-configuration with endpoints $(x,y)$.
\end{definition}

\begin{figure}[H]
\begin{center}
\includegraphics[trim = 0.5cm 21.5cm 6.5cm 5cm,scale=0.5,angle = 0]{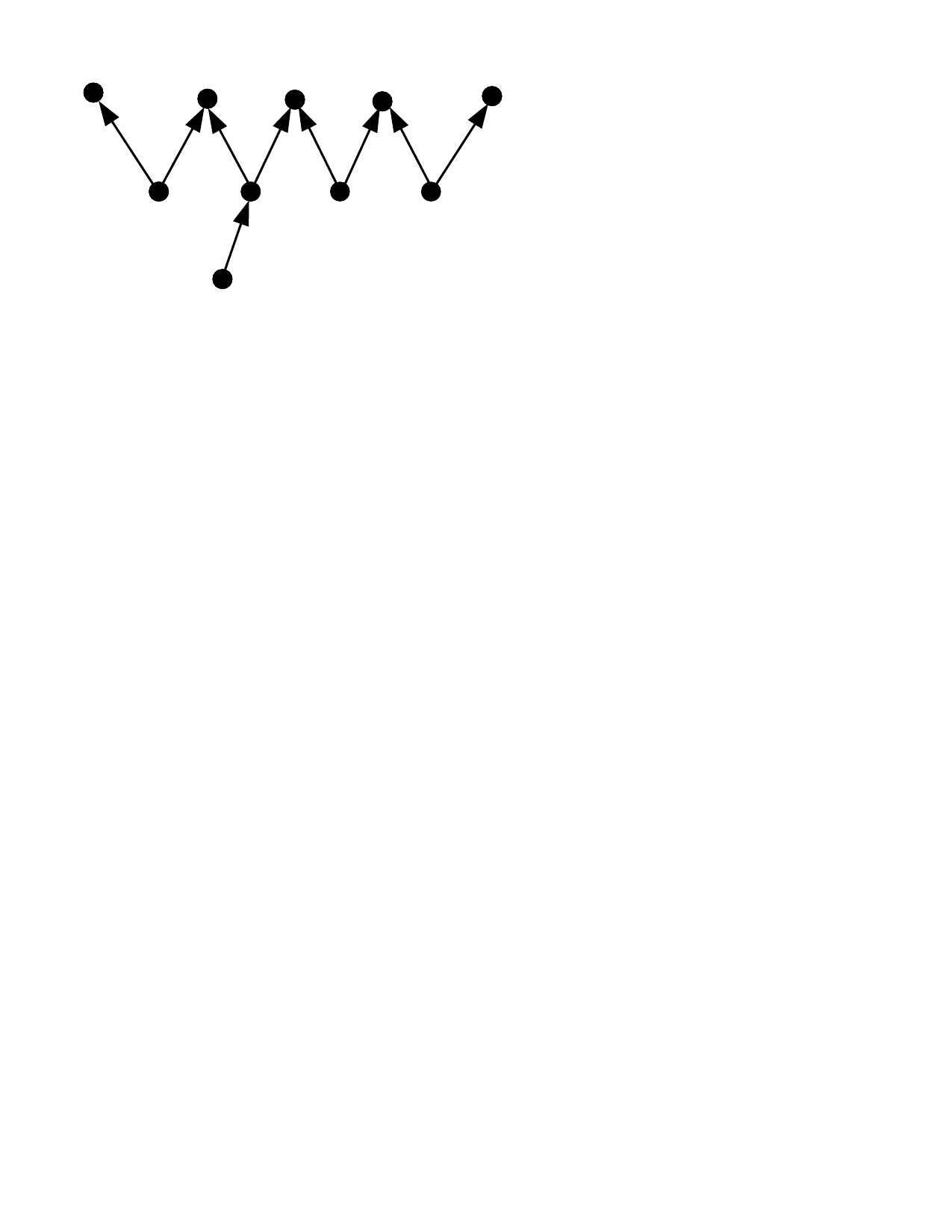}
\put(-200,70){$(x_0,y_1)$}
\put(-180,15){$(x_1,y_1)$}
\put(-135,-2){$(x', y')$}
\put(-70,70){$(x_4,y_5)$}
\caption{The vertices labeled $(x_0,y_1), (x_1, y_1), \dots, (x_4, y_5)$ form a W-configuration of length 9 with endpoints $(x_0, y_5)$. Note that the vertex $(x',y')$ is \emph{not} part of this W-configuration.
\label{ffig1}}
\end{center}
\end{figure}

\subsection{The future-subgraph and conditions on $\pi$ over time}
\begin{definition}[The future-subgraph]
\label{ffuture-defn}
For integers $0 \leq t \leq T$ and event $A$, we define the \emph{future-subgraph} $G_t^{T, A}$ as $G_t^{T,A} = \text{Proj}(\hat \tau^{T,A}_{ t})$.
\end{definition}

\begin{proposition}
\label{ffuture-prop1}
Let $T \geq 0$ and an event $A$ be given. Suppose at time $t$ we resample bad-event $B \equiv \pi(x_1) = y_1 \wedge \dots \wedge \pi(x_r) = y_r$. Then $G_{t+1}^{T,A}$ can deduced from $G_{t}^{T,A}$, according to the following rule:
If $G_t^{T,A}$ contains source nodes $v_1, \dots, v_s$ labeled $(B, x_1, y_1), \dots, (B, x_s, y_s)$, then $G^{T,A}_{t+1} = G_t - v_1 - \dots - v_s$; otherwise $G^{T,A}_{t+1} = G_t$.
\end{proposition}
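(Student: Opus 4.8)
The plan is to analyze the single step of the witness‑tree construction that is carried out when $t$ is processed, and to chase its effect through the projection operator. Recall that $\hat\tau^{T,A}_t$ is obtained from $\hat\tau^{T,A}_{t+1}$ by adjoining at most one new node $u$ labeled $B=B_t$, which is always added as a leaf: a child of either the root $r$, or of the deepest non‑root node whose label is $\sim B$ (ties broken by label). If no node is created (the final \textbf{Else} branch), then $\hat\tau^{T,A}_t=\hat\tau^{T,A}_{t+1}$, hence $G^{T,A}_t=G^{T,A}_{t+1}$; moreover in that case $\hat\tau^{T,A}_t$ contains no node labeled $B$ whatsoever (such a node would be $\sim B$ and would have triggered the first branch), so $\mathrm{Proj}(\hat\tau^{T,A}_t)$ contains no node with a label of the form $(B,\cdot,\cdot)$, and the stated rule correctly outputs $G^{T,A}_{t+1}=G^{T,A}_t$. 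So it remains to treat the case where $u$ is added.

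The key structural fact I would establish first is: \emph{every node of $\hat\tau^{T,A}_{t+1}$ whose label shares a coordinate with $B$ has strictly smaller depth than $u$.} Indeed, a non‑root node $w$ with $L(w)\sim B$ is an eligible parent for $u$, so the node $v$ actually chosen as $u$'s parent has depth at least that of $w$, and $u$, being a child of $v$, is strictly deeper; the root (which sits at depth $0$) is trivially shallower than $u$; and when $u$ is attached directly to the root the non‑root clause is vacuous because the root‑attachment branch is only reached when there is no non‑root node $\sim B$ at all. Combined with Proposition~\ref{ind-prop1} (nodes of equal depth carry non‑adjacent labels), this pins down exactly how $\mathrm{Proj}$ reacts to the insertion of $u$.

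Using this, I would argue: inserting $u$ adds exactly the $s$ projection‑nodes $v_1,\dots,v_s$ coming from $u$, labeled $(B,x_1,y_1),\dots,(B,x_s,y_s)$. Each $v_i$ is a source of $G^{T,A}_t$, since an in‑edge to $v_i$ would have to originate from a node deeper than $u$ that shares a coordinate with $B$, and none exists. Inserting $u$ alters no projection edge among the nodes already present in $\hat\tau^{T,A}_{t+1}$: a $\mathrm{Proj}$‑edge points from a node to the deepest strictly‑shallower node sharing the relevant coordinate, and since $u$ is never strictly shallower than an old node sharing a coordinate with $B$, it can never become this target; edges on coordinates disjoint from $B$ are untouched. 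Hence $G^{T,A}_t$ equals $G^{T,A}_{t+1}$ with the $s$ sources $v_1,\dots,v_s$ adjoined (their out‑edges running into $G^{T,A}_{t+1}$), so deleting these sources recovers $G^{T,A}_{t+1}$. Finally, $v_1,\dots,v_s$ are the \emph{only} sources of $G^{T,A}_t$ bearing the labels $(B,x_i,y_i)$: any other node with such a label comes from an older tree‑node labeled $B$, which is strictly shallower than $u$ and therefore receives an in‑edge along the matching coordinate (from $v_i$, or through a chain of nodes labeled $B$), so it is not a source. This confirms the rule in the remaining case.

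I expect the main obstacle to be the bookkeeping in this last step — verifying that adjoining the leaf $u$ leaves every pre‑existing projection edge intact \emph{and} that the new nodes are sources — which is exactly where the depth ordering, Proposition~\ref{ind-prop1}, and the "deepest strictly‑shallower node" rule defining $\mathrm{Proj}$ must be combined carefully; the degenerate possibility that $B$ already labels one or more nodes of $\hat\tau^{T,A}_{t+1}$ deserves particular attention, since then $\mathrm{Proj}(\hat\tau^{T,A}_t)$ carries several nodes with label $(B,x_i,y_i)$ and one must show that only the freshly inserted one is a source.
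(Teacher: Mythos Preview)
Your proposal is correct and follows essentially the same two-case split as the paper's proof: either $u$ is added (giving the $s$ new source nodes $v_1,\dots,v_s$ and leaving the rest of the projection unchanged) or it is not (and then no node labeled $B$ can be present). The paper's argument is considerably terser---it asserts, e.g., that $\text{Proj}(\hat\tau_t)=\text{Proj}(\hat\tau_{t+1})$ plus the nodes $v_1,\dots,v_s$ without spelling out why existing projection edges are unaffected---whereas you have filled in exactly those details via the depth-ordering fact and the ``deepest strictly-shallower'' rule, so your write-up is if anything more complete than the paper's.
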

\begin{proof}
By our rule for forming witness trees, $\hat \tau_{ t}$ is either equal to $\hat \tau_{ t+1}$ plus an additional node $u$ labeled by $B$, or is equal to $\hat \tau_{ t+1}$.

In this first case, we create new nodes $v_1, \dots, v_s$ in $\text{Proj}(\hat \tau_{ t})$ labeled by $(B, x_1, y_1), \dots, (B, x_s, y_s)$. By our rule for constructing edges in $\text{Proj}(\hat \tau_{ t})$, nodes $v_1, \dots, v_s$.  Thus, $\text{Proj}(\hat \tau_{ t}) = \text{Proj}(\hat \tau_{ t+1})$ plus the additional nodes $v_1, \dots, v_s$ and the claim holds.

Next suppose that $\hat \tau_{ t} = \hat \tau_{ t+1}$. We claim that $\text{Proj}(\hat \tau_{ t+1})$  cannot contain any source node $v$ labeled $(B, x,y)$. For, such a node $v$ would correspond some node $w$ in $\hat \tau_{ t+1}$ labeled $B$. Then $B$ would be eligible to be placed as a child of $w$ in forming $\hat \tau_t$, so that $\hat \tau_{ t}$ would contain an additional node compared to $\hat \tau_{ t+1}$.
\end{proof}

\begin{proposition}[\cite{prev-lll}]
\label{fchange-prop}
Suppose $\pi_{t_0}(x) \neq y$ and $\pi_{t_2}(x) = y$ for some $t_2 > t_0$. Then some bad-event $B$ must have been resampled at an intermediate time $t_1 \in (t_0, t_2 ]$, with $B \sim (x,y)$.
\end{proposition}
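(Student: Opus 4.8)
The plan is to isolate the single resampling step during which the value at position $x$ first reaches $y$ and then to dissect the swap sequence performed during that resampling. Concretely, I would let $t_1$ be the least time in $(t_0,t_2]$ with $\pi^{t_1}(x)=y$; this exists since $\pi^{t_2}(x)=y$, and minimality together with $\pi^{t_0}(x)\neq y$ forces $\pi^{t_1-1}(x)\neq y$. The $t_1$-th resampling is of some bad-event $B=\{(x_1,y_1),\dots,(x_r,y_r)\}$, which is true on $\pi^{t_1-1}$ (the algorithm only resamples true bad-events), and during it the value at position $x$ changes from something $\neq y$ to exactly $y$. Since $t_1>t_0$ and $t_1\le t_2$, it then only remains to check that $B$ contains a pair of the form $(x,y')$ or $(x',y)$.

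I would split into two cases. If $x=x_i$ for some $i$, then $(x,y_i)\in B$ and we are done with $y'=y_i$. Otherwise $x\notin\{x_1,\dots,x_r\}$, and I claim that throughout the $r$ swaps comprising the resampling of $B$, the value residing at position $x$ always lies in $\{\pi^{t_1-1}(x)\}\cup\{y_1,\dots,y_r\}$; since the final value $y$ differs from $\pi^{t_1-1}(x)$, this gives $y=y_j$ for some $j$, whence $(x_j,y)\in B$ and we are done with $x'=x_j$.

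The heart of the argument, and the step I expect to be the main obstacle, is this last claim: a single resampling performs a whole sequence of swaps, and a position may be touched more than once as some $x'_i$, so one cannot just read off what sits at a position after one swap. The key lemma to extract is: \emph{for every $i$, the value at position $x_i$ at the start of the $i$-th swap lies in $\{y_1,\dots,y_r\}$.} I would prove this by strong induction on $i$: this value begins as $\pi^{t_1-1}(x_i)=y_i$, and it can change only during an earlier swap $\ell<i$ with $x_i\in\{x_\ell,x'_\ell\}$; since the $x$-coordinates of $B$ are distinct this forces $x_i=x'_\ell$, and that swap overwrites the value at $x_i$ with the value that was at $x_\ell$ at the start of swap $\ell$, which lies in $\{y_1,\dots,y_r\}$ by the inductive hypothesis. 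Granting this lemma, return to the case $x\notin\{x_1,\dots,x_r\}$: the value at $x$ can change only during a swap $i$ with $x=x'_i$ (as $x\neq x_i$), and each such swap replaces it by the value at $x_i$ at the start of swap $i$, which is in $\{y_1,\dots,y_r\}$ by the lemma; hence the value at $x$ equals $\pi^{t_1-1}(x)$ until it first changes and lies in $\{y_1,\dots,y_r\}$ thereafter, which is exactly the claim. This completes the plan.
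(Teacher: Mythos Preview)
The paper does not actually prove this proposition; it is quoted verbatim from \cite{prev-lll} and used as a black box, so there is no in-paper proof to compare against. That said, your argument is correct and self-contained: the reduction to the single resampling step at the minimal time $t_1$ is the natural move, and the case split on whether $x$ is one of the $x_i$'s is clean. The inductive lemma---that at the start of the $i$-th swap the value sitting at $x_i$ lies in $\{y_1,\dots,y_r\}$---is exactly the right device to handle the fact that a position can be hit multiple times as some $x'_\ell$; the induction goes through because the $x_i$'s are pairwise distinct (as they must be for the atomic event $B$ to be consistent with a permutation), so $x_i$ being touched at swap $\ell<i$ forces $x_i=x'_\ell$, and the swap then imports the value from $x_\ell$, which is covered by the inductive hypothesis. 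The conclusion in Case~2 then follows immediately.

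One cosmetic remark: in Case~1 you might want to note that $y_i$ could in principle equal $y$ (so the pair would be literally $(x,y)$), but this is still of the form $(x,y')$, so nothing breaks.
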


We next show how a given value for $G_t^{T,A}$ implies certain conditions on $\pi_t$ (irrespective of $T,A$). The following result is where we use our definition of orderability; it is the main way our proof diverges from \cite{prev-lll}.
\begin{proposition}
\label{ffuture-prop2a}
For any $T \geq 0$ and any event $A$ and any time $t \leq T$, the permutation $\pi_t$ satisfies the following condition: \emph{If $w$ is a source node $G_t^{T,A}$ with $w \approx (x,y)$, then $\pi_t(x) = y$.}
\end{proposition}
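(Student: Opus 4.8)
The plan is to prove the statement by downward induction on $t$. In the base case $t=T$, the relevant partial witness tree is the trivial one consisting only of the root $r$ labeled $A$, so $G_T^{T,A}=\text{Proj}$ of this tree is a collection of isolated (hence source) nodes $(A,x_i,y_i)$ for $(x_i,y_i)\in A$; since $\hat\tau^{T,A}$ is defined only when $A$ holds at time $T$, each such node satisfies $\pi^T(x_i)=y_i$. For the inductive step I assume the claim for the future-subgraph at time $t+1$, let $B$ be the bad-event whose resampling carries $\pi^t$ to $\pi^{t+1}$, and invoke Proposition~\ref{ffuture-prop1}: either $G_t^{T,A}$ contains source nodes $v_1,\dots,v_s\approx(x_1,y_1),\dots,(x_s,y_s)$ labeled by the coordinates of $B$ and $G_{t+1}^{T,A}=G_t^{T,A}-v_1-\cdots-v_s$ (the ``recorded'' case, in which the partial tree gains one new node $u$ labeled $B$ whose projection nodes are exactly $v_1,\dots,v_s$), or $G_{t+1}^{T,A}=G_t^{T,A}$ (the ``not recorded'' case, in which no node is added).

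Now fix a source node $w\approx(x,y)$ of $G_t^{T,A}$, corresponding to a node $v$ of the partial witness tree with label $E\ni(x,y)$. If $w$ is one of $v_1,\dots,v_s$, then $(x,y)\in B$; since $B$ is true on $\pi^t$ (it is being resampled there), $\pi^t(x)=y$ and we are done. Otherwise $w$ survives into $G_{t+1}^{T,A}$ and is still a source node there (deleting nodes cannot create an incoming edge), so the inductive hypothesis gives $\pi^{t+1}(x)=y$. Assume for contradiction that $\pi^t(x)\neq y$. Applying Proposition~\ref{fchange-prop} with $t_0=t$, $t_2=t+1$ to the single resampling in that step forces $(x,y')\in B$ for some $y'$ or $(x',y)\in B$ for some $x'$, i.e.\ $B\sim(x,y)$, and therefore also $B\sim E$, so $v$ carries a label adjacent to $B$.

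The heart of the argument is to contradict this using the fact that $w$ is a source node of $G_t^{T,A}$. The key structural fact I will establish from the definition of $\text{Proj}$ together with Proposition~\ref{ind-prop1} is that the projection node of $v$ at coordinate $(x,y)$ is a source node if and only if no node of the partial witness tree of strictly greater depth than $v$ carries a label $\sim$-adjacent to $(x,y)$. In the recorded case the new node $u$ was attached below the deepest non-root node whose label is $\sim B$; since $v$ itself has a label $\sim B$ and $v\neq u$, the depth of $u$ strictly exceeds that of $v$ (and exceeds that of $r$), while $L(u)=B\sim(x,y)$ — contradicting the structural fact. In the not-recorded case the tree-building procedure took its final branch: there is no non-root node labeled $\sim B$, and the labels $B_1',\dots,B_s'$ of the children of $r$ together with $B$ do not form a set orderable to $A$. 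Since $v$ is labeled $\sim B$, this forces $v=r$, so $(x,y)\in A$; and since $w$ is a source node, none of the root's children is $\sim$-adjacent to $(x,y)$, i.e.\ $(x,y)\not\sim B_i'$ for all $i$. But $\{B_1',\dots,B_s'\}$ is orderable to $A$ (the root's children are only ever added subject to the procedure's orderability test), and appending $B$ with the element $z=(x,y)\in A$ extends any witnessing ordering, since $z\sim B$, $z\not\sim B_i'$ for every $i$, and $z$ is automatically distinct from the earlier witnesses; hence $\{B_1',\dots,B_s',B\}\in\text{Ord}(A)$ after all — contradiction. This completes the induction.

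I expect the main obstacle to be this last (not-recorded) case: one must make precise the equivalence ``$w$ is a source node of the projection'' $\iff$ ``no deeper node of the partial tree is $\sim$-adjacent to $(x,y)$'' — which rests on Proposition~\ref{ind-prop1} to control nodes of equal depth — and then dovetail it with the orderability test so that $z=(x,y)$ is a legal appended witness. Careful bookkeeping of the time index (which partial tree $\hat\tau^{T,A}_t$ corresponds to which of $\pi^t,\pi^{t+1}$, and the convention of Proposition~\ref{ffuture-prop1} that a source node is peeled off exactly when its bad-event is resampled) will also be needed, but it is routine.
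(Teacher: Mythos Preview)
Your proof is correct, and the core ideas --- the structural characterization ``$w$ is a source of $\text{Proj}(\hat\tau)$ iff no strictly deeper tree-node carries a label $\sim(x,y)$'', together with the orderability argument at the root --- match the paper exactly. The organization, however, is genuinely different. The paper argues \emph{directly}: assuming $\pi^t(x)\neq y$, it looks \emph{forward} to the minimal $t'>t$ with $\pi^{t'}(x)=y$, invokes Proposition~\ref{fchange-prop} once on the whole interval $(t,t']$ to locate some resampled $B\sim(x,y)$ at an intermediate time $t''$, and then argues that in $\hat\tau^{T,A}_{t''+1}$ (hence in $\hat\tau^{T,A}_t$) this $B$ must have landed strictly below $v$ --- treating the root and non-root cases just as you do. Your downward induction instead peels off a single resampling step and leans on Proposition~\ref{ffuture-prop1}; this makes the role of that proposition more explicit and arguably more modular, at the cost of an extra layer of case analysis (recorded versus not-recorded).

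One minor quibble: your base case asserts that $\hat\tau^{T,A}_T$ is the singleton tree, but in the paper's indexing the singleton is $\hat\tau^{T,A}_{T+1}$, and $\hat\tau^{T,A}_T$ already may contain a node for $B_T$. This is harmless --- either start the induction at $t=T+1$, or absorb the first step into the inductive case --- but as written the base case is off by one. Also, in the recorded case you phrase the placement of $u$ as ``below the deepest non-root node whose label is $\sim B$''; do not forget the sub-case where no such non-root node exists and $u$ is instead placed as a child of $r$ (your structural fact still yields the contradiction there, since then $v=r$ and $u$ sits at depth $1$).
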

\begin{proof}
Let us write $G = G_t^{T,A}$ and $\hat \tau = \hat \tau^{T, A}_{ t}$. The node $w \in G$ corresponds to a node $v \in \hat \tau$ labeled by an event $E$ (the event $E$ may be a bad-event, or may be $A$ itself if $v$ is the root node.)

Suppose $\pi_t(x) \neq y$. In order for $\hat \tau$ to contain a node labeled $E$, we must at some point $t' > t$ have $\pi_{t'} (x) = y$; let $t'$ be the minimal such time. By Proposition~\ref{fchange-prop}, we must encounter a bad-event $B$ demanding $(x', y') \sim (x,y)$ at some intervening time $t'' < t'$.  If $x = x'$ and $y = y'$, then this implies $\pi_{t''}(x) = y$, contradicting minimality of $t'$. So $B$ demands $(x', y')$ where either $x' = x$ or $y' = y$ (but not both).

Let $\tau'' = \hat \tau^{T,A}_{ t'' + 1}$. We claim that $\tau''$ must contain some node deeper than $v$ labeled by $B$. This will imply that $\hat \tau$ contains a node labeled by $B$ at greater depth than $v$, where $B$ demands$(x', y') \sim  (x,y)$. This will correspond to a node $w' \in G$ labeled $(B, x', y') \in G$, which will have a path to $w$, contradicting that $w$ is a source node of $G$.

If $v$ is not the root node, this is immediate: the bad-event $B$ would be eligible to be a child of $v$ in $\tau''$, and so would be placed into $\tau''$ deeper than $v$. So we suppose that $v$ is the root node. Since the labels of the children of the root node must be orderable to $A$, we can let $B''_1, \dots, B''_r$ be the labels of the children of $v$ ordered such that such there is some $z_i$ demanded by $A$ with $z_i \in B''_i, z_i \not \sim B''_1, \dots, B''_{i-1}$ for $i = 1, \dots, r$.

If $B''_i$ demands $(x'', y'') \sim (x',y')$ for some $i \in [r]$, then $B \sim B''_i$, and so $B$ would be eligible to be placed into  $\tau''$ as a child of $B''_i$. Otherwise, suppose that $(x', y') \not \sim B''_i$ for all such $i$. In this case, $B$ is distinct from $B''_1, \dots, B''_r$ and $\{B''_1, \dots, B''_r, B \}$ is an orderable set to $A$ (with the ordering $B''_1, \dots, B''_r, B$ and $z_{r+1} = (x', y')$). Thus, $B$ would be eligible to be placed as a child of $v$.
\end{proof}

\begin{proposition}
\label{ffuture-prop2}
For any $T \geq 0$ and any event $A$ and any time $t \leq T$, the permutation $\pi_t$ satisfies the following condition: \emph{For every W-configuration in $G_t^{T,A}$ with endpoints $(x_0, y_{s+1})$, we have $\pi_t(x_0) = y_{s+1}$.}
\end{proposition}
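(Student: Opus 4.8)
The plan is to prove this statement together with Proposition~\ref{ffuture-prop2a} as a single invariant of the evolving pair $(\pi^t, G_t^{T,A})$, by downward induction on $t$ with $T$ and $A$ held fixed. The base case is a value of $t$ for which $\hat\tau^{T,A}_t$ is so small that every W-configuration of $G_t^{T,A}$ is forced to a trivial shape, and there the claim is read off from Proposition~\ref{ffuture-prop2a} and Proposition~\ref{fchange-prop}. For the inductive step, let $B$ be the bad-event resampled at time $t$. By Proposition~\ref{ffuture-prop1}, $G_{t+1}^{T,A}$ is obtained from $G_t^{T,A}$ by deleting the source nodes corresponding to the pairs of $B$ (when they appear in $G_t$), while $\pi^{t+1}$ is obtained from $\pi^t$ by the swaps that resampling $B$ performs; the whole task is to push the invariant across this one step.

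I would split the W-configurations of $G_t^{T,A}$ into those that avoid the newly-present source nodes and those that use one of them. A W-configuration avoiding them is already a W-configuration of $G_{t+1}^{T,A}$, so its endpoint condition for $\pi^{t+1}$ is available by the inductive hypothesis; using the in-degree constraints in the definition of a W-configuration (the endpoint nodes have in-degree at most one, so they cannot also receive an edge from a source node of $B$) one shows that its endpoint row and column are insulated from $B$, and then that the swaps of $B$ cannot disturb the value at that row. A W-configuration that does use a new source node either is a single source node --- which is exactly Proposition~\ref{ffuture-prop2a} --- or is obtained from a W-configuration of $G_{t+1}^{T,A}$ by lengthening it at an endpoint, or by gluing two of them at a new common \emph{valley}; in each case the pair one must verify for $\pi^t$ is precisely the pair whose value is transported by one of the swaps of $B$, and one traces that single swap, invoking Proposition~\ref{fchange-prop} to forbid any intervening resampling of the relevant row or column.

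The hard part will be this last case analysis. Reinserting the source nodes of $B$ into $G_{t+1}^{T,A}$ can re-parse a W-configuration --- its set of base vertices changes --- and one must check that, under every way in which this can happen, the surviving endpoint invariant together with Proposition~\ref{ffuture-prop2a} determines the random swap partner selected during the resampling of $B$, so that no genuine probabilistic slack is introduced at that step. This is exactly where fine control over the temporal order of the swaps is needed; our treatment therefore follows \cite{prev-lll} very closely, the one new ingredient --- that orderability of the root's children (rather than mere $\sim$-independence) is what licenses attaching a node to the root in the witness-tree construction --- having already been incorporated into the proof of Proposition~\ref{ffuture-prop2a}, so that the W-configuration bookkeeping here is essentially unchanged from the earlier work.
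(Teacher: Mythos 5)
Your plan is correct in outline but organizes the induction differently from the paper. The paper inducts on the size $s$ of the W-configuration: the base case $s=0$ is exactly Proposition~\ref{ffuture-prop2a}, and the inductive step jumps forward to the \emph{first} time $t'\ge t$ at which a bad-event labelling some of the base nodes is resampled, applies the inductive hypothesis to the $r+1$ smaller W-configurations of $G_{t'+1}^{T,A}$, traces the chain of swaps performed during that one resampling to deduce $\pi^{t'}(x_0)=y_{s+1}$, and then invokes Proposition~\ref{fchange-prop} once to carry the conclusion back to time $t$. You instead induct downward on $t$, pushing the whole set of active conditions across one resampling at a time via Proposition~\ref{ffuture-prop1}; your ``gluing'' case requires exactly the same swap-tracing computation, and your ``avoiding'' case replaces the paper's single bridge over $[t,t']$ by an insulation argument at every intermediate step. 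Both schemes consume the same ingredients, so this is a legitimate reorganization rather than a new argument; the paper's version has the mild advantage that Proposition~\ref{ffuture-prop2a} can be quoted as a black box instead of being carried along as part of the invariant, and that the W-configuration only needs to be shown static between $t$ and $t'$ once.

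One step in your ``avoiding'' case needs more than you give it. The in-degree condition on the endpoint nodes only constrains nodes that are actually \emph{present} in $G_t^{T,A}$, so it excludes the resampled event $B$ from meeting row $x_0$ or column $y_{s+1}$ only when $B$'s pairs are added to the witness tree at step $t$. If $B$ is \emph{not} added, Proposition~\ref{fchange-prop} by itself forbids nothing --- it merely says that a change of the value in row $x_0$ would require $B$ to contain a pair in that row or column. To close the loop you must show that any such $B$ \emph{would necessarily} have been recorded in $\hat\tau^{T,A}_t$ (as a child of a node whose label contains $(x_0,y_1)$ or $(x_s,y_{s+1})$, or, when that node is the root, via the orderability clause of the tree-building rule), whence its projected source node would create a forbidden extra in-edge. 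This is the argument of Proposition~\ref{ffuture-prop2a}, so it is available to you, but in your scheme it is needed for the endpoints of nontrivial W-configurations as well, not only for source nodes; make that dependence explicit rather than attributing the insulation to Proposition~\ref{fchange-prop} and the in-degree bound alone.
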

\begin{proof}
  To simplify the notation, let us fix $T, A$ and write $G_i$ as short-hand for $G_i^{T,A}$, as well as $G = G_t = G_{t}^{T,A}$. Let us also write $\hat \tau = \hat \tau^{T, A}_{ t}$.

  We prove the claim by induction on $s$. The base case is $s = 0$; in this case $G$ contains a source node $w \approx (x_0, y_1)$, and this is precisely Proposition~\ref{ffuture-prop2a}. For the induction step $s \geq 1$, consider a W-configuration with base nodes $v_1, \dots, v_s$, and let $(x_0, y_1, x_1, y_2, \dots, x_s, y_{s+1})$ be the corresponding labels. Define $Z = \{ (x_1, y_1), \dots, (x_s, y_s) \}$.

  As $s \geq 1$, the nodes $v_1, \dots, v_s$ are not sink nodes and so correspond to non-root nodes of $\hat \tau$, and hence to resampled bad-events. Let $t' \geq t$ be minimal such at time $t'$ we resample a bad-event $B$ where one of the nodes $v_i$ has label $B$ and $v_i$ is a source node of $G_{t'}$. (This must exist, as otherwise the nodes $v_1, \dots, v_s$ would persist in the graph $G_{T+1}$). Let $\{ (x_{i_1}, y_{i_1}), \dots (x_{i_r}, y_{i_r}) \}$ be the set of pairs in $Z$ which are demanded by $B$. By definition of $B$ and $t'$ we have $r \geq 1$.

  By minimality of $t'$, the nodes $v_{i_1}, \dots, v_{i_r}$ are all source nodes in $G_{t'}$. The induction hypothesis therefore gives $\pi_{t'} (x_{i_1}) =  y_{i_1}, \dots, \pi_{t'}(x_{i_r}) = y_{i_r}$. The updated $G_{t'+1}$ (which is derived from $G_{t'}$ by removing $v_{i_1}, \dots, v_{i_r}$, plus possibly some additional source nodes of $G_{t'+1}$), has $r+1$ new W-configurations of size strictly smaller than $s$. By inductive hypothesis, the updated permutation $\pi_{t'+1}$ must then satisfy $\pi_{t'+1}(x_0) = y_{i_1}, \pi_{t'+1}(x_{i_1}) = y_{i_2}, \dots, \pi_{t'+1}(x_{i_r}) = y_{s+1}$.

As shown in \cite{prev-lll}, we may suppose without loss of generality that the resampling of $B$ swaps $x_{i_1}, \dots, x_{i_r}$ in that order, and then performs other swaps involving other elements of $B$. Let $\sigma$ denote the permutation after swapping $x_{i_1}, \dots, x_{i_r}$; we have $\pi_{t'+1}(x_{i_1}) = \sigma(x_{i_1}), \dots, \pi_{t'+1}(x_{i_r}) = \sigma(x_{i_r})$. Evidently $x_{i_1}$ has swapped with $x_{i_2}$, then $x_{i_2}$ has swapped with $x_{i_3}$, and so on, until eventually $x_{i_r}$ has swapped with $x'' = \pi_{t'}^{-1} y_{s+1}$. 

Thus, $\sigma(x'') = y_{i_1}$. If the resampling of $B$ has no further swaps involving $x''$, then $\sigma(x'') = \pi_{t'+1}(x'')$; as $\pi_{t'+1}(x_0) = y_{i_1}$, this implies that $x'' = x_0$.

If, on the other hand, the resampling of $B$ causes any further swaps involving $x''$, then this will result in $\pi_{t'+1}(x') = y_{i_1}$ where $(x',y') \in B$. This would imply that $x_0 = x'$, that is, $B$ demands $(x_0, y')$ where $y' \neq y_1$. But this would contradict that in the W-configuration in $G_{t'}$ the node labeled $(x_0, y_1)$ has in-degree one. 

Thus, we see that $x'' = x_0$, and there are no further swaps during the resampling of $B$ which affect $x''$. Consequently, $x_0 = x'' = \pi_{t'}^{-1} y_{s+1}$. This shows that $\pi_{t'}(x_0) = y_{s+1}$.

We finally claim that $\pi_t(x_0) = y_{s+1}$. For, by Proposition~\ref{fchange-prop}, otherwise we would have encountered some bad-event $B''$ which demands $(x', y') \sim (x_0, y_{s+1}$ at some time $t'' < t'$. This bad-event $B'$ would give rise to nodes $(x_0, y')$ or $(x', y_{s+1})$ in $G_{t+1}$, which contradicts that the nodes in W-configuration labeled $(x_0, y_1)$ and $(x_s, y_{s+1})$ have in-degree one.
\end{proof}

Proposition~\ref{ffuture-prop2} can be viewed equally as a definition:
\begin{definition}[Active conditions of a future-subgraph]
We refer to the conditions implied by Proposition~\ref{ffuture-prop2} as the \emph{active conditions}. More formally, we define
$$
\text{Active}(G) = \{ (x, y)  \mid \text{ $(x,y)$ are the end-points of a $W$-configuration of $G$} \}
$$

We define $a(G) = | \text{Active}(G) |$.  For any tree-structure $\tau$, we define $a(\tau) = a( \text{Proj}(\tau) )$.
\end{definition}

\subsection{The probability that the swaps are all successful}
\label{ftotal-prob-sec}
We have so far determined necessary conditions for the permutations $\pi_t$, depending on the graphs $G_t^{T,A}$. In this section, we finish by computing the probability that the swapping subroutine causes the permutations to, in fact, satisfy all such conditions.  Proposition~\ref{fexchange-prop2}, which we quote from \cite{prev-lll}, states the key randomness condition satisfied by the swapping subroutine. 
\begin{proposition}[\cite{prev-lll}]
\label{fexchange-prop2}
Let $G$ be a fixed witness subdag, and suppose that $B$ is resampled at some time $t \leq T$. Then, conditional on all past events, the probability that $G_t^{T,A} = G$ and that $\pi_{t+1}$ satisfies the active conditions of $G_{t+1}^{T,A}$ is at most $\Pr_{\Omega}(B) \frac{ (n - a(G_{t+1}^{T,A}))! }{(n - a(G))!}.$
\end{proposition}

We finally have all the pieces necessary to prove Lemma~\ref{fwitness-tree-lemma}.
{
\renewcommand{\thetheorem}{\ref{fwitness-tree-lemma}}
\begin{lemma}
    For any given tree-structure $\tau$, the probability that $\tau$ appears is at most $w(\tau)$.
\end{lemma}
\addtocounter{theorem}{-1}
}
\begin{proof}
The Swapping Algorithm, as we have defined it, begins by selecting the permutations uniformly at random. One may also consider fixing the permutations to some arbitrary (not random) value, and allowing the Swapping Algorithm to execute from that point onward. We refer to this as \emph{starting at an arbitrary state of the Swapping Algorithm.} 

We will prove the following by induction on $\tau$: starting at an arbitrary state of the Swapping Algorithm, the probability of $\tau$ appearing is given by
\begin{equation}
\label{fwt1}
\Pr(\text{$\tau$ appears}) \leq  w(\tau) \frac{  n!}{(n - a(\tau))!}. 
\end{equation}

The base case is when $\tau$ is the singleton node labeled $A$ where $A \equiv \pi(x_1) = y_1 \wedge \dots \wedge \pi(x_r) = y_r$. Then $w(\tau) = \frac{(n-r)!}{n!}$ and $\text{Proj}(\tau)$ contains $r$ isolated nodes labeled $(A,x_i,y_i)$, so that $a(\tau) = r$. Thus the RHS of (\ref{fwt1}) is $1$ and so (\ref{fwt1}) holds vacuously. 

For the induction step, a necessary condition for $\tau$ to appear is that we resample a bad-event $B$ which is the label of a node $v \in \tau$.  Suppose we condition on that $v$ is the first such node, resampled at time $t$ and that $\tau = \hat \tau^{T,A}_{ t}$. Let $G = \text{Proj}(\tau)$; thus, a necessary condition for $t$ to appear is to have $G_t^{T,A} = G$. By Proposition~\ref{ffuture-prop2}, $\pi_t$ must satisfy $\text{Active}(G)$ and $\pi_{t+1}$ must satisfy $\text{Active}(G')$ where $G' = G_{t+1}^{T,A}$; note that $G'$ is uniquely determined from $G$ (irrespective of $T$).

By Proposition~\ref{fexchange-prop2}, $\pi_{t+1}$ satisfies these conditions with probability at most  $\Pr_{\Omega}(B) \frac{ (n - a(G'))!}{(n - a(G))!}$.

Next, if this event occurs, then subsequent resamplings must cause $\hat \tau_{ t+1}^{T,A} =  \tau - v$. Thus $G' = \text{Proj}(\hat \tau_{ t+1}^{T,A}) = \text{Proj}(\tau - v)$. To bound the probability of this, we use the induction hypothesis. Note that the induction hypothesis gives a bound conditional on \emph{any} starting configuration of the Swapping Algorithm, so we may multiply these probabilities. Since $a(G') = a(\tau - v)$ and $w(\tau) = w(\tau- v) \Pr_{\Omega}(B)$, we have
\begin{align*}
\Pr(\text{$\tau'$ appears}) &\leq \Pr_{\Omega}(B) \frac{ (n - a(G'))!}{(n - a(G))!} \times  w(\tau - v) \frac{ n!}{(n - a( \tau - v))!} = w (\tau) \frac{n!}{(n - a(\tau))!}
\end{align*}
completing the induction argument.

We now consider the necessary conditions to produce the \emph{entire} tree-structure $\tau$, and not just fragments of it. First, the original configuration $\pi_0$ must satisfy the active conditions of $G_t^{T,A} = \text{Proj}(\hat \tau^{T,A}) = \text{Proj}(\tau)$. This occurs with probability $\frac{(n - a(\tau))!}{n!}$. Next, the subsequent sampling must be compatible with $\tau$; by (\ref{fwt1}) this has probability at most $w(\tau)  \times \frac{n!}{(n - a(\tau))!}$. Again, note that the bound in (\ref{fwt1}) is conditional on any starting position of the Swapping Algorithm, hence we may multiply these probabilities:
\begin{align*}
\Pr(\text{$\tau$ appears}) &\leq \frac{(n - a(\tau))!}{n!} \times w(\tau) \frac{n!}{(n - a(\tau))!} = w(\tau) \qedhere
\end{align*}
\end{proof}

{
\renewcommand{\thetheorem}{\ref{fnew-lll-prop}}
\begin{theorem}
For any monomial event $A$, we have $\pmt(A) \leq \Pr_{\Omega}(A) \Psi'(A)$.
\end{theorem}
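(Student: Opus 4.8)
The plan is to read off Theorem~\ref{fnew-lll-prop} from the Witness Tree Lemma (Lemma~\ref{fwitness-tree-lemma}), following the same enumeration strategy used for disjunctive events in Section~\ref{fmt-disj-sec} (cf.\ the proof of Proposition~\ref{fxx2}). Run the Swapping Algorithm to completion, and suppose $A$ holds on the output permutation. Let $T\ge 0$ be the \emph{first} time at which $A$ holds; if $A$ already holds on $\pi^0$ then $T=0$ and $\hat\tau^{T,A}$ is the singleton root. In all cases the tree-structure $\tau=\hat\tau^{T,A}$ appears, so by Lemma~\ref{fwitness-tree-lemma} it does so with probability at most $w(\tau)$. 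Taking a union bound over the collection $\mathcal S$ of tree-structures that can occur this way yields $\pmt(A)\le\sum_{\tau\in\mathcal S}w(\tau)$.

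The second step is to identify $\mathcal S$. I would argue: (i) every non-root node of $\hat\tau^{T,A}$ carries a label in $\mathcal B[A]$ --- such a node records a resampling of some $B$ at a time $t\in\{1,\dots,T\}$, so $B$ holds on $\pi^{t-1}$, and if we had $P_{\Omega}(A\mid B)=1$ this would force $A$ to hold on $\pi^{t-1}$, contradicting minimality of $T$ since $t-1<T$; and (ii) the labels of the root's children form a set in $\text{Ord}(A)$ --- they are pairwise non-adjacent by Proposition~\ref{ind-prop1} (being at depth one), and the generation procedure only attaches a node under the root when the resulting set of children-labels is orderable to $A$. Hence $\mathcal S$ is exactly the set of tree-structures rooted in $A$ with non-root labels in $\mathcal B[A]$ whose root-children label set lies in $\text{Ord}(A)$.

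Finally I would enumerate $\mathcal S$ by this label set $I\in\text{Ord}(A)$: for fixed $I=\{B_1,\dots,B_\ell\}$ the graph $\tau-r$ runs over forests of witness trees with roots labeled $B_1,\dots,B_\ell$ and all labels in $\mathcal B[A]$, and $w(\tau)=P_{\Omega}(A)\prod_i w(\tau_i)$ for the subtrees $\tau_i$. By the correspondence between witness trees rooted in $I$ and stable-set sequences rooted in $I$ together with Proposition~\ref{ftotal-weight-prop}, these forests have total weight at most $\mu_{\mathcal B[A]}(I)$, so
$$
\pmt(A)\ \le\ \sum_{\tau\in\mathcal S}w(\tau)\ \le\ P_{\Omega}(A)\sum_{I\in\text{Ord}(A)}\mu_{\mathcal B[A]}(I)\ =\ \theta'(A).
$$
The main difficulty has already been spent on Lemma~\ref{fwitness-tree-lemma}, via the witness-subdag / future-subgraph / swapping-probability development earlier in this appendix; in this last stretch the only point needing care is the first-occurrence reduction in step (i), which is what confines the non-root labels to $\mathcal B[A]$ (exactly as in Proposition~\ref{fxx1}).
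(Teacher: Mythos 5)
Your proposal is correct and follows essentially the same route as the paper's own proof: first-occurrence minimality confines non-root labels to $\mathcal B[A]$ and forces the root's children to form an independent orderable set, Lemma~\ref{fwitness-tree-lemma} bounds each tree's appearance probability by its weight, and the enumeration over $Y \in \text{Ord}(A)$ via stable-set sequences yields $\theta'(A)$. Your justification of step (i) is in fact slightly more explicit than the paper's one-line remark, but the argument is the same.
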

}
\begin{proof}
If $A$ occurs for the first time at time $T$, then $\hat \tau^{T,A}$ appears. Thus, a necessary condition for $A$ to hold is that some tree-structure $\tau$ rooted in $A$ must appear. For any fixed $\tau$ this has probability at most $w(\tau)$ by Lemma~\ref{fwitness-tree-lemma}. So the overall probability of $A$ is at most the sum of the weights of all such tree-structures. 

Now suppose that $\tau$ is some tree-structure with root node $r$ labeled $A$, and that $Y$ is the set of labels of the children of $r$. We must have $Y \in \text{Ord}(A)$.  Also, the labels of the events in $\tau$ must come from $\mathcal B[A]$, as this is the first time that $A$ has occured.

Because of the tie-breaking rules for the formation of witness trees, one can check that there is a one-to-one correspondence between possible values for the tree-structure $\tau - r$ and stable-set sequences for $\mathcal B[A]$ rooted at $Y$. Therefore, summing over $Y \in \text{Ord}(A)$ and summing over $\text{Stab}_{\mathcal B[A]}(Y)$, we get that the total weight of all such tree-structures is given by
\[
\Pr_{\Omega}(A) \sum_{Y \in \text{Ord}(A)} \sum_{S \in \text{Stab}_{\mathcal B[A]}(Y)} w(S) = \Pr_{\Omega}(A) \sum_{Y \in \text{Ord}(A)} \mu_{\mathcal B[A]}(Y) = \Pr_{\Omega}(A) \Psi'(A) \qedhere
\]
\end{proof}

\end{document}